
\documentclass[10pt]{amsart}
\usepackage{amsxtra, amsfonts, amsmath, amsthm, amstext, amssymb, amscd, mathrsfs, verbatim, color}
\usepackage{threeparttable}
\usepackage{mathtools}
\usepackage[ansinew]{inputenc}\usepackage[T1]{fontenc}
\usepackage[all,cmtip]{xy}
\addtolength{\topmargin}{-0.4cm}
\addtolength{\textheight}{0.4cm}
\addtolength{\evensidemargin}{-0.6cm}
\addtolength{\oddsidemargin}{-0.6cm}
\addtolength{\textwidth}{1.2cm}
\theoremstyle{plain}


\newtheorem{theorem}{Theorem}[section]
\newtheorem{lemma}[theorem]{Lemma}
\newtheorem{definition-theorem}[theorem]{Definition-Theorem}
\newtheorem{proposition}[theorem]{Proposition}
\newtheorem{corollary}[theorem]{Corollary}

\theoremstyle{definition}
\newtheorem{definition}[theorem]{Definition}
\newtheorem{example}[theorem]{Example}
\newtheorem{remark}[theorem]{Remark}
\newtheorem{notation}[theorem]{Notation}

\newcommand \bth[1] { \begin{theorem}\label{t#1} }
\newcommand \ble[1] { \begin{lemma}\label{l#1} }

\newcommand \bpr[1] { \begin{proposition}\label{p#1} }
\newcommand \bco[1] { \begin{corollary}\label{c#1} }
\newcommand \bde[1] { \begin{definition}\label{d#1}\rm }
\newcommand \bex[1] { \begin{example}\label{e#1}\rm }
\newcommand \bre[1] { \begin{remark}\label{r#1}\rm }

\newcommand \bnota[1] {\begin{notation}\label{n#1}\rm }

\newcommand {\ele} { \end{lemma} }

\newcommand {\epr} { \end{proposition} }
\newcommand {\eco} { \end{corollary} }
\newcommand {\ede} { \end{definition} }
\newcommand {\eex} { \end{example} }
\newcommand {\ere} { \end{remark} }
\newcommand {\enota} { \end{notation} }











\def \Id { {\mathrm{Id}} }





\DeclareMathOperator \Hom { {\mathrm{Hom}} }

\DeclareMathOperator \im { { {\mathrm im}}}

									



\begin{document}
\setlength{\baselineskip}{1.2\baselineskip}
\title[First extensions]
{Some methods of computing first extensions between modules of graded Hecke algebras}
\author[Kei Yuen Chan]{Kei Yuen Chan}
\address{ Korteweg-de Vries Institute for Mathematics, Universiteit van Amsterdam}
\email{K.Y.Chan@uva.nl, keiyuen.chan@gmail.com}

\maketitle 
\begin{abstract}
In this paper, we establish connections between the first extensions of simple modules and certain filtrations of of standard modules in the setting of graded Hecke algebras. The filtrations involved are radical filtrations and Jantzen filtrations. Our approach involves the use of information from the Langlands classification as well as some deeper understanding on some structure of some modules. Such module arises from the image of a Knapp-Stein type intertwining operator and is a quotient of a generalized standard module. Along the way, we also deduce some results on the blocks for finite-dimensional modules of graded Hecke alebras.

As an application, we compute the Ext-groups for irreducible modules in a block for the graded Hecke algebra of type $C_3$, assuming the truth of a version of Jantzen conjecture. 


\end{abstract}

\section{Introduction} \label{s intro}

\subsection{}
Let $R$ be a reduced root system and let $\Pi$ be a fixed set of simple roots in $R$. Let $\mathbb{H}$ be the Lusztig's graded (affine) Hecke algebra associated to a root datum $(R,V, R^{\vee}, V^{\vee}, \Pi)$ and a parameter function $k$ (see Definition \ref{def graded affine}). This paper continues our study \cite{Ch, Ch2} of the $\mathrm{Ext}$-groups in the category of $\mathbb{H}$-modules. The representation theory of graded Hecke algebras can be transferred to the counterpart of affine Hecke algebras by Lusztig's reduction theorems \cite{Lu} and hence is also useful in understanding the representation theory of $p$-adic groups. 

The graded Hecke algebra is a deformation of a skew group ring. The category of representations of a graded Hecke algebra however does not possess a (natural) tensor product, which differs from the one of a skew group ring. An $\mathbb{H}$-module still admits a Koszul type resolution analogous to the one for skew group ring, which allows one to establish a Poincar\'e duality on $\mathrm{Ext}$-groups and establish a formula for the Euler-Poincar\'e pairing depending only on the reflection group structure of $\mathbb{H}$-modules \cite{Ch2}. 

The extensions between a discrete series and a tempered module have been determined independently in \cite{Me}, \cite{OS} and \cite{Ch2} (for different but related settings). Then it is natural to consider modules outside those pairs. One possible direction is to study extensions between tempered modules in the setting of graded Hecke algebras analogous to the results of Opdam-Solleveld \cite{OS2}. Another possible direction is to consider extensions for some non-tempered modules. Our study mainly arises when we consider extensions of the latter pairs.

The main result (Theorem \ref{thm first ext sum}) in this paper is to establish connections between first extensions of simple $\mathbb{H}$-modules and certain filtrations of standard modules. Such result is then applied to compute some examples at the end. A novelty of our approach is to use an uniqueness property for a type of module which we call generalized standard modules.

Jantzen filtrations and radical filtrations are involved in our study. It is interesting to see if various filtrations are compatible. A number of such study for Verma modules as well as Weyl modules can be found in the literature, and are closely related to the Kazhdan-Lusztig theory and an interpretation of Kazhdan-Lusztig polynomials by Vogan \cite{Vo}. In our case of graded Hecke algebras, radical filtrations and Jantzen filtrations in general do not coincide (see Example \ref{ss remark filt}). In that example, the socle filtration is also not the same as the Jantzen filtration. It would be an interesting question to see if there are some deeper relations between those filtrations and with related geometry.

Our approach of the study makes use of an indecomposable module, which arises from the image of an intertwining operator and is a quotient of an indecomposable generalized standard module. The structure of such module in terms of composition factors can be described in some details (Theorem \ref{thm JF general stand}) and hence provides a platform for transferring information between Jantzen filtrations and $\mathrm{Ext}$-groups.

\subsection{}

In order to describe our results in more detail, we need more notations. We first recall the Langlands classification for $\mathbb{H}$. For each $J \subseteq \Pi$, one can associate a parabolic subalgebra $\mathbb{H}_J$ of $\mathbb{H}$ (see Notation \ref{not parabolic}). The Langlands classification \cite{Ev} states that simple $\mathbb{H}$-modules can be parametrized by the set $\Xi_L$ of pairs $(J,U)$, where $J \subseteq \Pi$ and $U$ is certain irreducible $\mathbb{H}_J$-module (analogous to the tempered representation twisted by a character for reductive groups). 

The more explicit construction of simple modules from those Langlands classification data $(J,U) $ is as follows. Given a pair $(J,U)\in \Xi_L$, one constructs a parabolically induced modules $I(J,U) :=\mathbb{H} \otimes_{\mathbb{H}_J} U$, which is usually referred to standard modules in the literature. The Langlands classification asserts that $I(J,U)$ has a unique simple quotient, denoted by $L(J,U)$, and all simple modules arises from this way.

We first state a useful vanishing result on $\mathrm{Ext}$-groups between some standard modules and some simple modules. For each $(J,U) \in \Xi_L$, we associate an element $\nu(J,U) \in V^{\vee}$ (see Definition \ref{eqn temp}). Recall that there is a partial ordering $\leq$ on the set $\Xi_L$ determined by the dominance ordering on the elements $\nu(J,U) \in V^{\vee}$.
\begin{proposition} \label{main thm 1} (Proposition \ref{prop ext lower langlands para})
Let $(J_1,U_1), (J_2, U_2) \in \Xi_L$. If $\nu(J_1, U_1) \not\leq \nu(J_2, U_2)$, then 
\[ \mathrm{Ext}^i_{\mathbb{H}}(I(J_1, U_1), L(J_2, U_2)) =\mathrm{Ext}^i_{\mathbb{H}}(I(J_1, U_1), I(J_2, U_2))=0 .\]
\end{proposition}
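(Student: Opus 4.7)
My plan is to use Frobenius reciprocity to reduce the statement to a central-character argument inside the parabolic subalgebra $\mathbb{H}_{J_1}$. Since $\mathbb{H}$ is free of finite rank as a right $\mathbb{H}_{J_1}$-module, the induction functor $I(J_1,-) = \mathbb{H}\otimes_{\mathbb{H}_{J_1}}(-)$ is exact and preserves projectives, giving the Ext-level adjunction
\[
\mathrm{Ext}^i_{\mathbb{H}}(I(J_1,U_1),M) \;\cong\; \mathrm{Ext}^i_{\mathbb{H}_{J_1}}\bigl(U_1,\mathrm{Res}_{\mathbb{H}_{J_1}} M\bigr)
\]
for every $\mathbb{H}$-module $M$. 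Since $\mathrm{Ext}$ in the $\mathbb{H}_{J_1}$-module category splits as a direct sum over central-character blocks, the desired vanishing follows once I show that $U_1$ shares no $\mathbb{H}_{J_1}$-central character with any composition factor of $\mathrm{Res}_{\mathbb{H}_{J_1}}M$.

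For $M = I(J_2, U_2)$, the $\mathbb{H}$-central character of $M$ is the $W$-orbit of $\nu(J_2,U_2)$; consequently the $\mathbb{H}_{J_1}$-central characters of the composition factors of $\mathrm{Res}_{\mathbb{H}_{J_1}} M$ are precisely the $W_{J_1}$-orbits of the form $W_{J_1}\cdot(w\cdot\nu(J_2,U_2))$ for $w$ ranging over double-coset representatives in $W_{J_1}\backslash W$. Meanwhile, the central character of $U_1$ as an $\mathbb{H}_{J_1}$-module is $W_{J_1}\cdot\nu(J_1,U_1)$. The combinatorial core of the argument is to check that these two $W_{J_1}$-orbits coincide only when $\nu(J_1,U_1)\leq\nu(J_2,U_2)$ in the dominance order; this is the expected consequence of the Langlands-positivity conditions satisfied by both $\nu(J_i,U_i)$ (cf. \deref{t} / Definition \ref{eqn temp}), since any Weyl conjugate of a Langlands parameter is bounded above in dominance order by the original parameter after projecting back to the Langlands chamber. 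The hypothesis $\nu(J_1,U_1)\not\leq\nu(J_2,U_2)$ then rules out any such coincidence, so each block contribution vanishes, giving the result for $M = I(J_2, U_2)$.

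For $M = L(J_2, U_2)$, I would argue by induction on $\nu(J_2,U_2)$ in the Langlands partial order, via the short exact sequence
\[
0 \to K \to I(J_2,U_2) \to L(J_2,U_2) \to 0
\]
whose kernel $K$ has composition factors $L(J',U')$ with $\nu(J',U')<\nu(J_2,U_2)$ strictly. Transitivity of the partial order forces $\nu(J_1,U_1)\not\leq\nu(J',U')$ for every such factor (otherwise $\nu(J_1,U_1)\leq\nu(J',U')<\nu(J_2,U_2)$ would contradict the hypothesis), so the inductive hypothesis gives Ext-vanishing against each $L(J',U')$, and hence against $K$ by a dévissage through its composition series. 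The long exact Ext sequence then propagates the vanishing from $I(J_2,U_2)$ to $L(J_2,U_2)$.

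The main obstacle I anticipate is the combinatorial verification in the second paragraph: pinning down that the $W_{J_1}$-orbit of any Weyl conjugate $w\cdot\nu(J_2,U_2)$ can meet the $W_{J_1}$-orbit of $\nu(J_1,U_1)$ only when $\nu(J_1,U_1)\leq\nu(J_2,U_2)$. This requires careful use of the Langlands-cone conditions on both parameters simultaneously, and is the substantive geometric input of the proposition; the Frobenius reciprocity and dévissage steps around it are formal.
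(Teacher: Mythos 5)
Your overall skeleton (Shapiro's lemma to pass to $\mathrm{Ext}^i_{\mathbb{H}_{J_1}}(U_1,\mathrm{Res}_{\mathbb{H}_{J_1}}M)$, vanishing across distinct central-character blocks as in Lemma \ref{lem cc conjugate}, and a d\'evissage between $I(J_2,U_2)$ and $L(J_2,U_2)$) is exactly the frame the paper uses, only with the two halves in the opposite order. But there is a genuine gap, and it sits precisely at the step you yourself flag as the ``combinatorial core.'' First, your identification of the relevant central characters is wrong: the $\mathbb{H}$-central character of $I(J_2,U_2)$ is the $W$-orbit of a weight of $U_2$, whose real part is $\sum_{\alpha\in J_2}a_\alpha\alpha^\vee+\nu(J_2,U_2)$ with $a_\alpha\le 0$, not the $W$-orbit of $\nu(J_2,U_2)$; likewise the $\mathbb{H}_{J_1}$-central character of $U_1$ is the $W_{J_1}$-orbit of a full weight $\sum_{\alpha\in J_1}a_\alpha\alpha^\vee+\nu(J_1,U_1)$, not of $\nu(J_1,U_1)$. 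So the statement you propose to verify (that $W_{J_1}\cdot(w\cdot\nu(J_2,U_2))$ meets $W_{J_1}\cdot\nu(J_1,U_1)$ only if $\nu(J_1,U_1)\le\nu(J_2,U_2)$) is not the statement the block decomposition actually requires; the tempered ``$\sum a_\alpha\alpha^\vee$'' parts cannot be discarded, and your heuristic (``a Weyl conjugate projects to something dominated by the original parameter'') only gives the upper bound $\nu(\gamma^\vee)\le\nu(J_2,U_2)$ for weights $\gamma^\vee$ of the restriction, not the complementary lower bound $\nu(J_1,U_1)\le\nu(\gamma^\vee)$ that the argument hinges on.

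That lower bound is the substantive content, and it is exactly what the paper supplies and you leave unproved: given a composition factor $U'$ of the restriction with the same $\mathbb{H}_{J_1}$-central character as $U_1$, every weight of $U'$ has real part $\sum_{\alpha\in J_1}a'_\alpha\alpha^\vee+\nu(J_1,U_1)$, but with coefficients $a'_\alpha$ of unknown sign, so one cannot read off $\nu$ directly. The paper fixes this by applying the Langlands classification to $\mathrm{Res}_{\mathbb{H}^{ss}_{J_1}}U'$ to select a special weight $\gamma^\vee$ whose $J_1$-component is of the form $\sum_{\alpha\in J'}a_\alpha\alpha^\vee+\sum_{\alpha\in J_1\setminus J'}b_\alpha\overline{\omega}_\alpha^\vee$ with $a_\alpha\le 0$, $b_\alpha>0$, then uses that each $\overline{\omega}_\alpha^\vee$ is a non-negative combination of coroots together with the monotonicity of $\nu(\cdot)$ (the geometric lemmas of Langlands) to conclude $\nu(J_1,U_1)\le\nu(\gamma^\vee)\le\nu(J_2,U_2)$, contradicting the hypothesis; the case of the target $I(J_2,U_2)$ then follows by d\'evissage through Lemma \ref{lem decompose for lang class}. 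Without this weight-selection argument (or an equivalent), your proof of the $I(J_2,U_2)$ case does not go through, and since your treatment of the $L(J_2,U_2)$ case is built on it by induction, the whole proposal reduces to an unproven assertion at its key point. The surrounding formal steps (Ext-level Frobenius reciprocity, block splitting, the induction along the partial order with base case $K=0$) are fine.
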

\noindent
Proposition \ref{main thm 1} is obtained by comparing the Langlands classification parameters and standard homological algebras. One may compare with methods for the highest weight representations of complex semisimple Lie algebras.



Using Proposition \ref{main thm 1} with a certain contravariant operator on $\mathbb{H}$-modules, one can relate most (but not all) of the first extension of two non-isomorphic simple modules in terms of the second layer of the radical filtration of a standard module. The problem for understanding the first self-extension of a simple module is much harder and has less clue from the standard information of Langlands classification.

For determining self-extensions of simple modules, our approach of using standard modules logically leads to study a structure of generalized standard modules. Here a generalized standard module is an indecomposable module which admits a certain filtration by standard modules (see Definition \ref{def generalized St} and Proposition \ref{prop indecomp standard} for the details).

An approach to understand such structure is to use an intertwining operator between a generalized standard module and a certain dual of a generalized standard module. The intertwining operator is normalized in a way to have properties analogous to a Knapp-Stein intertwining operator for real reductive groups \cite{ KS} (see Proposition \ref{prop intertwin properties}). One may also compare with other study of intertwining operators \cite{Ro}, \cite{Re0}, \cite{KR}, \cite{DO2} and\cite{So}. The study has some common features, but we also prove some results not appearing in the literature. 

Such intertwining operator defines a Jantzen type filtration and in particular defines an interesting quotient from the first layer of the Jantzen filtration. In the case of standard modules, the first layer of Jantzen filtration is simply the simple quotient and so we may regard such quotient to be a generalization of the simple quotient (see Definition \ref{def quotient gen}). A main technical result (Theorem \ref{thm JF general stand}) in this paper is to describe the structure of such quotient for special cases in terms of the Jantzen filtration of the (ordinary) standard module, and characterize such module by certain property with respect to the composition factors. This is also the pathway to connect some first self-extensions of simple modules and the Jantzen filtration. 

The special cases of generalized standard modules considered in Theorem \ref{thm JF general stand} are called of $S(V)$-type, meaning that extensions are arising from extensions of representations of a polynomial algebra. Then one can exploit the well-known extensions for polynomials algebras.

\subsection{} \label{ss intro 3} To describe another result, we need more notations. Let $(J,U) \in \Xi_L$. Let $V_J$ be the space spanned by simple roots in $J$. Let $V_J^{\vee, \bot}$ be the subspace of $V^{\vee}$ containing functionals vanishing on $V_J$ (Notation \ref{not parabolic}). For each $\eta^{\vee} \in V_J^{\vee, \bot}$ (possibly zero for our definition), we can deform the induced module $\mathbb{H} \otimes_{\mathbb{H}_J}U$ along the direction $\eta^{\vee}$ to obtain a family of $\mathbb{H}$-modules $\mathbb{H} \otimes_{\mathbb{H}_J} U_{\mathbf t\eta^{\vee}}$, where $\mathbf t$ is an indeterminate. One can associate an intertwining operator $\Delta_{\mathbf t \eta^{\vee}}$ for $\mathbb{H} \otimes_{\mathbb{H}_J} U_{\mathbf t\eta^{\vee}}$ and the vanishing order of elements under $\Delta_{\mathbf t\eta{\vee}}$  defines a Jantzen-type filtration, denoted $\mathrm{JF}^i_{\eta^{\vee}}(J,U)$ on the standard module $\mathbb{H} \otimes_{\mathbb{H}_J} U$. For the details of the notations, see Section \ref{s jan filt}. As discussed before, the Jantzen type filtration is also defined for generalized standard modules in Section \ref{s jan filt} as the intertwining operator $\Delta_{\mathbf t\eta^{\vee}}$ is defined for all generalized standard modules.

We define 
\[ V^{\bot}_{\mathrm{bad}}(J,U) = \left\{ \eta^{\vee} \in V_J^{\vee, \bot} : \mathrm{JF}_{\eta^{\vee}}^1(J,U)=\mathrm{JF}_{\eta^{\vee}}^2(J,U) \right\} ,\]
which forms a vector space. We now give a version of Theorem \ref{thm jantan differential}.
\begin{theorem} (part of Theorem \ref{thm jantan differential}) \label{main thm 2}
Let $(J,U) \in \Xi_L$. Let $\mathbb{H}_J \cong \mathbb{H}_J^{ss} \otimes S(V_J^{\bot})$ be a decomposition as in Notation \ref{not parabolic}.  Suppose $\mathrm{Res}_{\mathbb{H}_J^{ss}}U$ is a discrete series, or more generally 
\[\mathrm{Ext}^1_{\mathbb{H}^{ss}_J}(\mathrm{Res}_{\mathbb{H}^{ss}_J} U, \mathrm{Res}_{\mathbb{H}^{ss}_J} U)=0 .\] 
Here $\mathrm{Res}_{\mathbb{H}^{ss}_J}$ is the restriction functor to $\mathbb{H}^{ss}_J$-module. Then 
\[ \mathrm{Ext}^1_{\mathbb H}(L(J,U), L(J,U)) \cong V^{\bot}_{\mathrm{bad}}(J,U) .\]
\end{theorem}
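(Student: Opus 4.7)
The plan is to identify $\mathrm{Ext}^1_{\mathbb H}(L(J,U), L(J,U))$ with a distinguished subspace of $\mathrm{Ext}^1_{\mathbb H}(I(J,U), L(J,U))$, compute the latter via Frobenius reciprocity and the stated Ext-vanishing hypothesis, and then use the generalized standard module analysis of Theorem \ref{thm JF general stand} to cut out the correct subspace as $V^\bot_\mathrm{bad}(J,U)$.

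First I would apply $\mathrm{Hom}_{\mathbb H}(-, L(J,U))$ to the short exact sequence $0 \to K(J,U) \to I(J,U) \to L(J,U) \to 0$, where $K(J,U)$ is the radical. Since $L(J,U)$ is the unique simple quotient of $I(J,U)$ and occurs as a composition factor with multiplicity one, $\mathrm{Hom}_{\mathbb H}(I(J,U), L(J,U)) \cong \mathbb C$ and $\mathrm{Hom}_{\mathbb H}(K(J,U), L(J,U)) = 0$. The long exact sequence then yields an injection $\mathrm{Ext}^1_{\mathbb H}(L(J,U), L(J,U)) \hookrightarrow \mathrm{Ext}^1_{\mathbb H}(I(J,U), L(J,U))$, whose image consists of those extensions $[0 \to L(J,U) \to F \to I(J,U) \to 0]$ in which the submodule $K(J,U) \subset I(J,U)$ lifts to $F$.

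Next, by exactness of parabolic induction and Shapiro's lemma,
\[
\mathrm{Ext}^1_{\mathbb H}(I(J,U), L(J,U)) \;\cong\; \mathrm{Ext}^1_{\mathbb H_J}(U, \mathrm{Res}_{\mathbb H_J} L(J,U)).
\]
Using the decomposition $\mathbb H_J \cong \mathbb H_J^{ss} \otimes S(V_J^\bot)$ and the hypothesis $\mathrm{Ext}^1_{\mathbb H_J^{ss}}(\mathrm{Res}\,U, \mathrm{Res}\,U) = 0$, a K\"unneth-type calculation expresses every class on the right as coming from a first-order deformation of $U$ along some $\eta^\vee \in V_J^{\vee,\bot}$. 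Explicitly, $\eta^\vee$ produces the length-two $\mathbb H_J$-module $U_{\mathbf t\eta^\vee}/(\mathbf t^2)$; its parabolic induction $I(J, U_{\mathbf t\eta^\vee})/(\mathbf t^2)$ is a self-extension of $I(J,U)$, and composing with $I(J,U) \twoheadrightarrow L(J,U)$ yields the corresponding class in $\mathrm{Ext}^1_{\mathbb H}(I(J,U), L(J,U))$.

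The critical step is to decide which $\eta^\vee$ yield classes lying in the image of $\mathrm{Ext}^1_{\mathbb H}(L(J,U), L(J,U))$. Here I would apply Theorem \ref{thm JF general stand} to the generalized standard module $I(J, U_{\mathbf t\eta^\vee})/(\mathbf t^2)$, which under the stated hypothesis is of $S(V)$-type. That theorem describes the quotient of this generalized standard module by the kernel of its intertwining operator $\Delta_{\mathbf t\eta^\vee}$ in terms of the Jantzen filtration $\mathrm{JF}^i_{\eta^\vee}(J,U)$. The abstract lifting condition from the first step then translates, via this description, into the equality $\mathrm{JF}^1_{\eta^\vee}(J,U) = \mathrm{JF}^2_{\eta^\vee}(J,U)$, i.e., $\eta^\vee \in V^\bot_\mathrm{bad}(J,U)$. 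Injectivity of the resulting map $V^\bot_\mathrm{bad}(J,U) \to \mathrm{Ext}^1_{\mathbb H}(L(J,U), L(J,U))$ is inherited from the injection $V_J^{\vee,\bot} \hookrightarrow \mathrm{Ext}^1_{\mathbb H_J}(U, U)$ (a consequence of the hypothesis), while surjectivity is built into the deformation parametrization established in Step 2. I expect the main obstacle to be precisely this last matching: reconciling the abstract lifting condition coming from the long exact sequence with the concrete Jantzen-filtration criterion provided by Theorem \ref{thm JF general stand}, which requires a careful bookkeeping of how the intertwining operator on the length-two deformation interacts with the short exact sequence $0 \to I(J,U) \to I(J, U_{\mathbf t\eta^\vee})/(\mathbf t^2) \to I(J,U) \to 0$.
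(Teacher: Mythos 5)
Your skeleton is the same as the paper's: from $0 \rightarrow N(J,U) \rightarrow I(J,U) \rightarrow L(J,U) \rightarrow 0$ and $\mathrm{Hom}_{\mathbb{H}}(N(J,U),L(J,U))=0$ you get an injection $\mathrm{pr}^{*,1}\colon \mathrm{Ext}^1_{\mathbb{H}}(L(J,U),L(J,U)) \hookrightarrow \mathrm{Ext}^1_{\mathbb{H}}(I(J,U),L(J,U))$, the target is identified with $V_J^{\vee,\bot}$ under the hypothesis $\mathrm{Ext}^1_{\mathbb{H}^{ss}_J}(\mathrm{Res}\,U,\mathrm{Res}\,U)=0$ (the paper routes this through Lemma \ref{lem identify induced simple} and Lemma \ref{lem nat isom ext} rather than through $\mathrm{Res}_{\mathbb{H}_J}L(J,U)$ directly, which is only a difference of bookkeeping, though your route still needs the central-character argument of Lemmas \ref{lem cc conjugate} and \ref{lem decompose for lang class} to discard the part of $\mathrm{Res}_{\mathbb{H}_J}L(J,U)$ away from the block of $U$), and the theorem reduces to proving $\mathrm{im}\,\mathrm{pr}^{*,1} = V^{\bot}_{\mathrm{bad}}(J,U)$.

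The genuine gap is exactly the step you flag as "the main obstacle": the claim that the lifting condition from the long exact sequence "translates" into $\mathrm{JF}^1_{\eta^{\vee}}(J,U)=\mathrm{JF}^2_{\eta^{\vee}}(J,U)$ is asserted, not argued, and it is not a formal consequence of Theorem \ref{thm JF general stand} — it is the heart of the proof of Theorem \ref{thm jantan differential}(1), and both inclusions require separate work. For $\mathrm{im}\,\mathrm{pr}^{*,1}\cap V_J^{\vee,\bot}\subset V^{\bot}_{\mathrm{bad}}(J,U)$, the paper takes a good direction $\eta^{\vee}$ allegedly in the image, pushes $I(J,U^{2,\eta^{\vee}})$ forward along $I(J,U)\twoheadrightarrow L(J,U)$ onto the length-two self-extension $\mathcal E(\eta')$ of $L(J,U)$, and uses the uniqueness characterization in Theorem \ref{thm JF general stand}(3) (via Lemma \ref{lem unique sub}) to force $\mathcal E(\eta')\cong L(J,U^{2,\eta^{\vee}})$; since by Theorem \ref{thm JF general stand}(1)--(2) the latter contains all composition factors of $\mathrm{JF}^0_{\eta^{\vee}}(J,U)/\mathrm{JF}^2_{\eta^{\vee}}(J,U)$, and for a good direction this quotient has a factor not isomorphic to $L(J,U)$, one reaches a contradiction. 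For the reverse inclusion (which your "surjectivity is built into Step 2" does not supply, since Step 2 only parametrizes $\mathrm{Ext}^1_{\mathbb{H}}(I(J,U),L(J,U))$), one must, for each bad $\eta^{\vee}$, actually exhibit a class $\eta'$ with $\mathrm{pr}^{*,1}(\eta')=\eta^{\vee}$: the paper shows $E=\mathrm{JF}^0_{\eta^{\vee}}(J,U^{2,\eta^{\vee}})/\mathrm{JF}^1_{\eta^{\vee}}(J,U^{2,\eta^{\vee}})$ is indecomposable of length two with both factors $L(J,U)$, and then builds the two commutative diagrams realizing $E$ as a pushout of the sequence $0\rightarrow I(J,U)\rightarrow I(J,U^{2,\eta^{\vee}})\rightarrow I(J,U)\rightarrow 0$ through $N(J,U)$, so that the resulting Yoneda class maps to $\eta^{\vee}$. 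Without carrying out these two arguments (or equivalents), your proposal only establishes the injection into $V_J^{\vee,\bot}$, not the isomorphism with $V^{\bot}_{\mathrm{bad}}(J,U)$.
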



A novelty of the proof for Theorem \ref{main thm 2} is essentially using a uniqueness property.

Using Theorem \ref{main thm 1} and Theorem \ref{main thm 2}, the first extensions of simple modules can be much determined by information from filtrations of standard modules. We state a version of Theorem \ref{thm first ext sum}:

\begin{theorem} (Theorem \ref{thm first ext sum}) \label{main result 3}
Let $(J_1, U_1), (J_2, U_2) \in \Xi_L$. 
\begin{enumerate}
\item Suppose $\nu(J_1, U_1) \neq \nu(J_2, U_2)$. If $\mathrm{Ext}^1_{\mathbb{H}}(L(J_1, U_1), L(J_2, U_2)) \neq 0$, then either $L(J_1, U_1)$ is isomorphic to a (simple) subquotient of $I(L_2, U_2)$ or $L(J_2, U_2)$ is isomorphic to a (simple) subquotient of $I(J_1, U_1)$.
\item Suppose $\nu(J_1, U_1) =\nu(J_2, U_2)$ (and in particular $J_1=J_2$). Suppose  
\[ \mathrm{Ext}^1_{\mathbb{H}^{ss}_{J_1}}(\mathrm{Res}_{\mathbb{H}^{ss}_{J_1}} U_1, \mathrm{Res}_{\mathbb{H}^{ss}_{J_1}} U_2)=0 . \] 
Then $\mathrm{Ext}^1_{\mathbb{H}}(L(J_1, U_1), L(J_2, U_2))$ is determined by the second layer of Jantzen filtrations in the sense of Theorem \ref{main thm 2} when $U_1 \cong U_2$ and is zero when $U_1 \not\cong U_2$.
\end{enumerate}
\end{theorem}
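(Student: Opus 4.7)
I would prove the two parts separately. Part (1) combines Proposition \ref{main thm 1} with a long exact sequence for the standard module; part (2) combines Theorem \ref{main thm 2} (diagonal case) with a Frobenius-reciprocity argument (off-diagonal case).

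For part (1): since the dominance ordering on $\nu$-values is antisymmetric, the hypothesis $\nu(J_1,U_1)\neq \nu(J_2,U_2)$ forces either $\nu(J_1,U_1)\not\leq \nu(J_2,U_2)$ or $\nu(J_2,U_2)\not\leq \nu(J_1,U_1)$. Using a contravariant involution on the category of $\mathbb{H}$-modules preserving simples and $\nu$-values (the Iwahori--Matsumoto-type dual already used in the paper), one obtains $\mathrm{Ext}^1_{\mathbb{H}}(L(J_1,U_1),L(J_2,U_2))\cong \mathrm{Ext}^1_{\mathbb{H}}(L(J_2,U_2),L(J_1,U_1))$, so we may assume $\nu(J_1,U_1)\not\leq \nu(J_2,U_2)$. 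Let $K_1$ be the radical of $I(J_1,U_1)$, giving a short exact sequence $0\to K_1\to I(J_1,U_1)\to L(J_1,U_1)\to 0$. Applying $\mathrm{Hom}_{\mathbb{H}}(-,L(J_2,U_2))$ and using (i) that non-isomorphic simples have no Hom, (ii) Langlands uniqueness of the simple quotient giving $\mathrm{Hom}_{\mathbb{H}}(I(J_1,U_1),L(J_2,U_2))=0$, and (iii) Proposition \ref{main thm 1} giving $\mathrm{Ext}^1_{\mathbb{H}}(I(J_1,U_1),L(J_2,U_2))=0$, the long exact sequence collapses to $\mathrm{Hom}_{\mathbb{H}}(K_1,L(J_2,U_2))\cong \mathrm{Ext}^1_{\mathbb{H}}(L(J_1,U_1),L(J_2,U_2))$. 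Nonvanishing of the right-hand side then forces $L(J_2,U_2)$ to appear as a simple quotient of $K_1$, hence as a subquotient of $I(J_1,U_1)$, which is the desired alternative.

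For part (2): the equality of $\nu$-values forces $J_1=J_2=:J$. If $U_1\cong U_2$, apply Theorem \ref{main thm 2} directly to identify $\mathrm{Ext}^1_{\mathbb{H}}(L(J,U_1),L(J,U_1))$ with $V^{\bot}_{\mathrm{bad}}(J,U_1)$, which is precisely the second Jantzen layer. If $U_1\not\cong U_2$, I again use the long exact sequence from part (1) and need to show that both $\mathrm{Hom}_{\mathbb{H}}(K_1,L(J,U_2))$ and $\mathrm{Ext}^1_{\mathbb{H}}(I(J,U_1),L(J,U_2))$ vanish. The key input is Frobenius reciprocity, exact at Ext level since $\mathbb{H}$ is free over $\mathbb{H}_J$, which gives $\mathrm{Ext}^1_{\mathbb{H}}(I(J,U_1),L(J,U_2))\cong \mathrm{Ext}^1_{\mathbb{H}_J}(U_1,\mathrm{Res}_{\mathbb{H}_J}L(J,U_2))$. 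Using the decomposition $\mathbb{H}_J\cong \mathbb{H}_J^{ss}\otimes S(V_J^{\bot})$ and the coincidence of the two $S(V_J^{\bot})$-characters forced by the shared $\nu$, a K\"unneth-type splitting reduces this to $\mathrm{Ext}^1_{\mathbb{H}_J^{ss}}$ between $U_1$ and the $\mathbb{H}_J^{ss}$-composition factors of $\mathrm{Res}_{\mathbb{H}_J}L(J,U_2)$ identified through Theorem \ref{thm JF general stand}; the vanishing hypothesis $\mathrm{Ext}^1_{\mathbb{H}_J^{ss}}(U_1,U_2)=0$ together with $U_1\not\cong U_2$ eliminates every contribution.

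The main obstacle, as expected, is the vanishing in the $U_1\not\cong U_2$ sub-case of part (2): bootstrapping an Ext-vanishing statement about a single $\mathbb{H}_J^{ss}$-module pair into an Ext-vanishing statement about $U_1$ against the full restriction $\mathrm{Res}_{\mathbb{H}_J}L(J,U_2)$, whose composition factors can a priori be varied. This is precisely where the structural description of the intertwining image in Theorem \ref{thm JF general stand} is indispensable, since it enumerates exactly which $\mathbb{H}_J$-composition factors of $\mathrm{Res}_{\mathbb{H}_J}L(J,U_2)$ can share the $\nu$-parameter of $U_1$, and shows that under the stated hypotheses the only candidate is $U_2$, whose contribution is killed by assumption.
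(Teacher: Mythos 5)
Your route is essentially the paper's: for $\nu(J_1,U_1)\neq\nu(J_2,U_2)$ the paper likewise applies $\mathrm{Hom}_{\mathbb{H}}(\cdot,L(J_2,U_2))$ to $0\to N(J_1,U_1)\to I(J_1,U_1)\to L(J_1,U_1)\to 0$, kills the flanking terms with Proposition \ref{prop ext lower langlands para}, and reverses the two arguments by a contravariant duality; for equal $\nu$ it quotes Theorem \ref{thm jantan differential}(2) on the diagonal and a K\"unneth-type vanishing off the diagonal. One naming correction: the duality you need is the $\bullet$-dual together with Lemma \ref{lem bullet dual ismo} (every simple is self-$\bullet$-dual), not an Iwahori--Matsumoto-type operation; the Iwahori--Matsumoto involution is covariant and permutes simples nontrivially, so it would not give $\mathrm{Ext}^1_{\mathbb{H}}(X,Y)\cong\mathrm{Ext}^1_{\mathbb{H}}(Y,X)$.

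In the case $\nu(J_1,U_1)=\nu(J_2,U_2)$ with $U_1\not\cong U_2$ you announce that both $\mathrm{Hom}_{\mathbb{H}}(K_1,L(J_2,U_2))$ and $\mathrm{Ext}^1_{\mathbb{H}}(I(J_1,U_1),L(J_2,U_2))$ must vanish but only argue the second. The first is needed to collapse the long exact sequence and follows from Lemma \ref{lem decompose for lang class}(2): every composition factor $Z\not\cong L(J_1,U_1)$ of $I(J_1,U_1)$ satisfies $\nu(Z)<\nu(J_1,U_1)=\nu(J_2,U_2)$, so $L(J_2,U_2)$ does not occur in $K_1=N(J_1,U_1)$. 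For the second, the control of which $\mathbb{H}_{J}$-composition factors of $\mathrm{Res}_{\mathbb{H}_J}L(J_2,U_2)$ can share the central character of $U_1$ does not come from Theorem \ref{thm JF general stand}; that theorem describes the quotient $L(J,U^{r,\eta^{\vee}})$ of a generalized standard module and enters only through Theorem \ref{thm jantan differential} in the diagonal case, so it is not ``indispensable'' here. What you actually need is Lemma \ref{lem decompose for lang class} combined with Lemma \ref{lem no WJ conj} and Lemma \ref{lem cc conjugate}: in $\mathrm{Res}_{\mathbb{H}_J}I(J_2,U_2)\cong U_2\oplus Y$ the factors of $Y$ have weights whose $\nu$-part is strictly below $\nu(J_1,U_1)$, hence $\mathbb{H}_J$-central characters different from that of $U_1$, so only $U_2$ can contribute, and that contribution dies by Lemma \ref{lem kuneth form} under your hypothesis (note also that equality of the $\nu$'s does not force the two $S(V_J^{\bot})$-characters to coincide, since $\nu$ records only real parts; but if they differ, both $\mathrm{Hom}$ and $\mathrm{Ext}^1$ over $S(V_J^{\bot})$ vanish, so the conclusion stands). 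The paper itself bypasses $\mathrm{Res}_{\mathbb{H}_J}L(J_2,U_2)$ by computing $\mathrm{Ext}^1_{\mathbb{H}}(I(J_1,U_1),L(J_2,U_2))$ directly via Lemma \ref{lem nat isom ext}. With these repairs your argument coincides with the paper's proof.
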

As an application of our results, we compute $\mathrm{Ext}$-groups between some simple modules based on some computations of composition factors for standard modules in \cite{Ci} at the end. We remark that results in this paper are independent of \cite{Ch2}, but we shall need some results in \cite{Ch2} for computing some examples.


\subsection{} It leaves some questions from our study. First, we do not consider non-$S(V)$-type extensions, which roughly means dropping the hypothesis in Theorem \ref{main thm 2}. Second, finding an effective way to compute radical filtration and Jantzen filtration in general is still an open problem. For the second question, there is a conjectural way to compute Jantzen filtration by geometric means (see e.g. \cite{BC}) using Kazhdan-Lusztig polynomials \cite{Lu2, Lu3} (also see \cite{CG}, \cite{Lu0}) and moreover the Arakawa-Suzuki functor \cite{AS, Su} as well as results of Rogawski \cite{Ro}  determine some Jantzen filtrations in type $A_n$ case. This sheds some light on understanding first self-extensions in such directions. 

\subsection{} We give an organization of this paper. Section \ref{s intro} is the introduction. Section \ref{s prelim} recalls basic definitions and states some basic results. Section \ref{s gss ext} defines the generalized standard modules and deduces several results about indecomposability and extensions from the Langlands classification. Section \ref{s intertwin operat} studies a certain normalized intertwining operator for generalized standard modules and show it has properties analogous to a Knapp-Stein intertwining operator. Section \ref{s jan filt} studies the Jantzen filtration of a generalized standard modules, which is defined from the intertwining operation in Section \ref{s intertwin operat}. Such intertwining operator defines a certain quotient on the generalized standard modules. Section \ref{s quotient gsm} specifies on certain generalized standard modules and we describe the structure of the quotient in terms of the Jantzen filtration of (ordinary) standard modules. Section \ref{s first ext} summarizes our study by giving a connection between some first extensions of simple modules and filtrations on standard modules. Theorem \ref{thm first ext sum} is our main result. 


\subsection{Notation} For an algebra $\mathbb{A}$ and an $\mathbb{A}$-module $X$, we write $\pi_X(a)x$ or $a.x$ or $ax$ for the action of $a$ on $x \in X$.




\subsection{Acknowledgment} The author would like to thank Dan Ciubotaru and Peter Trapa for discussions on Jantzen filtrations and would like to thank Eric Opdam for discussions on intertwining operators. This research was supported by both of the ERC-advanced grant no. 268105 from Eric Opdam and the Croucher Fellowship. 

\section{Preliminaries} \label{s prelim}
\subsection{Root systems} \label{ss basic notation}
Let $R$ be a reduced root system. Let $\Pi$ be a fixed choice of simple roots in $R$. Then $\Pi$ determines the set $R^+$ of positive roots. For $\alpha \in R^+$, sometimes write $\alpha>0$. For $\alpha \in R \setminus R^+$, write $\alpha <0$. Let $W$ be the finite reflection group of $R$. Let $V_0$ be a real vector space spanned by $R$. For any $\alpha \in \Pi$, let $s_{\alpha}$ be the simple reflection in $W$ associated to $\alpha$ (i.e. $\alpha \in V_0$ is in the $(-1)$-eigenspace of $s_{\alpha}$). For $\alpha \in R$, let $\alpha^{\vee} \in \Hom_{\mathbb{R}}(V_0, \mathbb{R})$ such that
\[   s_{\alpha}(v) = v-\langle v, \alpha^{\vee} \rangle \alpha, \]
where $\langle v, \alpha^{\vee} \rangle=\alpha^{\vee}(v)$. Let $R^{\vee} \subset \Hom_{\mathbb{R}}(V_0, \mathbb{R})$ be the collection of all $\alpha^{\vee}$ ($\alpha \in R$). Let $V_0^{\vee}= \Hom_{\mathbb{R}}(V_0, \mathbb{R})$. For each $\alpha \in \Pi$, let $\omega_{\alpha} \in V$ (resp. $\omega_{\alpha}^{\vee} \in V^{\vee}$) be the fundamental weight (resp. coweight) associated to $\alpha$ i.e. $\beta^{\vee}(\omega_{\alpha})=\delta_{\alpha, \beta}$ for any $\beta\in \Pi$. 

By extending the scalars, let $V=\mathbb{C} \otimes_{\mathbb{R}} V_0$ and let $V^{\vee}=\mathbb{C} \otimes_{\mathbb{R}} V_0^{\vee}$. We call $(R, V, R^{\vee}, V^{\vee}, \Pi)$ to be a root datum. Let $n=\dim_{\mathbb{C}} V=|\Pi|$. We remark that in \cite{Ch2} we do not assume $\dim_{\mathbb{C}} V=|\Pi|$ in general.



\subsection{Graded Hecke algebras}

Let $k: \Pi \rightarrow \mathbb{C}$ be a parameter function such that $k(\alpha)=k(\alpha')$ if $\alpha$ and $\alpha'$ are in the same $W$-orbit. We shall simply write $k_{\alpha}$ for $k(\alpha)$.

\begin{definition} \label{def graded affine} \cite[Section 4]{Lu}
The graded (affine) Hecke algebra $\mathbb{H}=\mathbb{H}(\Pi,k)$ associated to a root datum $(R, V, R^{\vee}, V^{\vee}, \Pi)$ and a parameter function $k$ is an associative algebra with an unit over $\mathbb{C}$ generated by the symbols $\left\{ t_w :w \in W \right\}$ and $\left\{ f_v: v \in V \right\}$ satisfying the following relations:
\begin{enumerate}
\item[(1)] The map $w  \mapsto t_w$ from $\mathbb{C}[W]=\bigoplus_{w\in W} \mathbb{C}w  \rightarrow \mathbb{H }$ is an algebra injection,
\item[(2)] The map $v \mapsto f_v$ from $S(V) \rightarrow \mathbb{H}$ is an algebra injection,
\end{enumerate}
For simplicity, we shall simply write $v$ for $f_v$ from now on.
\begin{enumerate}
\item[(3)] the generators satisfy the following relation:
\[    t_{s_{\alpha}}v-s_{\alpha}(v)t_{s_{\alpha}}=k_{\alpha}\langle v, \alpha^{\vee} \rangle \quad \quad (\alpha \in \Pi,\ v \in V).\]
\end{enumerate}
In the remainder of this paper, we shall assume that $\mathbb{H}$ is a graded Hecke algebra obtained from an extended affine Hecke algebra in the sense of \cite[Section 9]{Lu} (also see \cite[Section 3]{BM}). The only place we need this assumption is Lemma \ref{lem bullet dual ismo} and the results depending on Lemma \ref{lem bullet dual ismo}. Other parts are also valid for more general graded Hecke algebras, including non-crystallographic types.
\end{definition}

\begin{notation} \label{not parabolic}
For any subset $J$ of $\Pi$, define $V_J$ to be the complex subspace of $V$ spanned by elements in $J$ and define $V_J^{\vee}$ be the dual space of $V_J$ lying in $V^{\vee}$. Let $R_J = V_J \cap R$ and let $R_J^{\vee}=V_J^{\vee} \cap R^{\vee}$. Let $R_J^+=R_J \cap R^+$. Let $W_J$ be the subgroup of $W$ generated by the elements $s_{\alpha}$ for $\alpha \in J$. 
For $J \subset \Pi$, let $W_J$ be the subgroup of $W$ generated by all $s_{\alpha}$ with $\alpha \in J$. Let $w_{0, J}$ be the longest element in $W_J$. Let $W^J$ be the set of minimal representatives in the cosets in $W/W_J$.

Let 
\[ V_J^{\vee, \bot} = \left\{ \gamma^{\vee} \in V^{\vee}: \gamma^{\vee}(v)=0 \mbox{ for all $v \in V_J$} \right\} \]
and 
\[ V_J^{ \bot} = \left\{ v \in V: \gamma^{\vee}(v)=0 \mbox{ for all $\gamma^{\vee} \in V_J^{\vee}$} \right\}. \]

For $J \subset \Pi$, let $\mathbb{H}_J$ be the subalgebra of $\mathbb{H}$ generated by all $v \in V$ and $t_w$ ($w \in W_J$) and let $\mathbb{H}_J^{ss}$ be the subalgebra of $\mathbb{H}$ generated by all $v \in V_J$ and $t_w$ ($w \in W_J$). We have
\[  \mathbb{H}_J \cong \mathbb{H}^{ss}_J \otimes S(V_J^{\bot})  .\]
\end{notation}

For a complex associative algebra $\mathbb{A}$ with an unit and for $\mathbb{A}$-modules $X$ and $Y$, denote by $\mathrm{Ext}^i_{\mathbb{A}}(X,Y)$ the $i$-th derived functor of $\mathrm{Hom}_{\mathbb{A}}(.,Y)$ on $X$ in the category of $\mathbb{A}$-modules. 

\begin{definition}
For a finite-dimensional $\mathbb{H}_J$-module $U$ and for $\gamma^{\vee} \in V^{\vee}$, an element $0 \neq u \in U$ is called a generalized $\gamma^{\vee}$-weight vector (or simply generalized weight vector) if $(v-\gamma^{\vee}(v))^r.u=0$ for some positive integer $r$. Such $\gamma^{\vee}$ is called a {\it weight} of $U$. 

Let $U$ be an $\mathbb{H}_J$-module. Let $\mathrm{Wgt}(U)$ be the set of weights of $U$. Let $M(U, \gamma^{\vee})$ be the space spanned by generalized weight vectors of weight $\gamma^{\vee}$. 
\end{definition}
\subsection{Central characters}
Let $J \subset \Pi$. The center $Z(\mathbb{H}_J)$ of $\mathbb{H}_J$ is naturally isomorphic to $S(V)^{W_J}$ \cite[Theorem 6.5]{Lu}, the $W_J$-invariant polynomials in $S(V)$. Each element in $Z(\mathbb{H}_J)$ acts on an simple $\mathbb{H}_J$-module $X$ by a scalar, and determines a map from $S(V)^{W_J}$ to $\mathbb{C}$. Those maps can be associated to a unique $W_J$-orbits on $V^{\vee}$. We shall call such orbit to be the {\it central character} of such module $X$.

We shall use the following result freely in the remainder of this paper. See for example \cite[Theorem I. 4.1]{BW} for similar arguments, also see \cite[Proposition 4.2.32]{Ch}.

\begin{lemma} \label{lem cc conjugate}
Let $J \subset \Pi$. Let $X$ and $Y$ be simple $\mathbb{H}_J$-modules. If $X$ and $Y$ have different central characters, then $\mathrm{Ext}^i_{\mathbb{H}_J}(X, Y) =0$ for all $i$.
\end{lemma}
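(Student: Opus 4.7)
The plan is to use the standard trick that the center of an algebra acts on Ext-groups in two canonically equal ways, one from the first variable and one from the second. Concretely, for any $z \in Z(\mathbb{H}_J)$ and any $\mathbb{H}_J$-modules $M, N$, left multiplication by $z$ on $M$ defines an endomorphism $z_M \in \mathrm{End}_{\mathbb{H}_J}(M)$ (using centrality), and similarly $z_N \in \mathrm{End}_{\mathbb{H}_J}(N)$. By functoriality of $\mathrm{Ext}^i_{\mathbb{H}_J}(-,-)$, these induce endomorphisms $(z_M)^*$ and $(z_N)_*$ of $\mathrm{Ext}^i_{\mathbb{H}_J}(M,N)$, and a standard diagram chase (e.g., computing via a projective resolution $P_\bullet \to M$ where $z$ acts on each $P_i$ and commutes with all differentials and with every $\phi \in \mathrm{Hom}_{\mathbb{H}_J}(P_i, N)$) shows $(z_M)^* = (z_N)_*$ on $\mathrm{Ext}^i_{\mathbb{H}_J}(M,N)$.

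Now specialize to $M = X$ and $N = Y$, both simple. By Schur's lemma and the identification $Z(\mathbb{H}_J) \cong S(V)^{W_J}$ from \cite[Theorem 6.5]{Lu}, each element of $Z(\mathbb{H}_J)$ acts on $X$ (resp. $Y$) by a scalar $\chi_X(z)$ (resp. $\chi_Y(z)$) determined by the central character, which is a $W_J$-orbit on $V^{\vee}$. Hence $z_X = \chi_X(z)\,\mathrm{id}_X$ and $z_Y = \chi_Y(z)\,\mathrm{id}_Y$, and the equality of induced endomorphisms forces
\[
\chi_X(z) \cdot \xi \;=\; \chi_Y(z) \cdot \xi \qquad \text{for every } \xi \in \mathrm{Ext}^i_{\mathbb{H}_J}(X,Y).
\]

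The hypothesis that $X$ and $Y$ have different central characters means the corresponding $W_J$-orbits in $V^{\vee}$ differ, so by the fact that $W_J$-invariants separate $W_J$-orbits there exists $z \in S(V)^{W_J} = Z(\mathbb{H}_J)$ with $\chi_X(z) \ne \chi_Y(z)$. Then $(\chi_X(z) - \chi_Y(z))$ is a nonzero scalar that annihilates $\mathrm{Ext}^i_{\mathbb{H}_J}(X, Y)$, forcing this Ext-group to vanish for every $i$.

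There is really no obstacle here; the argument is entirely formal and works for any associative algebra equipped with a central subalgebra that separates central characters of simple modules. The only mild point is verifying the equality $(z_M)^* = (z_N)_*$ on $\mathrm{Ext}^i$, which is why one phrases things via a projective resolution; the reference to \cite[Theorem I.4.1]{BW} and \cite[Proposition 4.2.32]{Ch} mentioned in the text covers exactly this standard homological input.
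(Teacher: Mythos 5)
Your argument is correct and is exactly the standard central-character argument that the paper itself invokes: it gives no proof of Lemma \ref{lem cc conjugate}, only pointing to \cite[Theorem I.4.1]{BW} and \cite[Proposition 4.2.32]{Ch}, which proceed by the same device of letting $Z(\mathbb{H}_J)\cong S(V)^{W_J}$ act on $\mathrm{Ext}^i$ through either variable and using that invariants separate $W_J$-orbits. So your proposal fills in precisely the intended proof, with no gaps.
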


\subsection{$\bullet$-dual} \label{ss duals}

We recall an anti-involution $\bullet$ studied by Opdam \cite{Op} and Barbasch-Ciubotaru \cite{BC, BC2} (with a slight variation). Define $\bullet: \mathbb{H} \rightarrow \mathbb{H}$ to be a linear anti-involution determined by
\[  v^{\bullet}=v \quad \mbox{ for $v \in V$ }, \quad t_w^{\bullet}=t_w^{-1} \quad \mbox{ for $w \in W$ }.
\]
For an $\mathbb{H}$-module $X$, the $\bullet$-operation defines a dual module denoted $X^{\bullet}$.
\begin{lemma} \label{lem bullet dual ismo}
All simple $\mathbb{H}$-modules are self $\bullet$-dual.
\end{lemma}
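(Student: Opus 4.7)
The plan is to argue by characters. Every simple $\mathbb{H}$-module $X$ is finite-dimensional (since $\mathbb{H}$ is module-finite over its centre $S(V)^W$, by \cite[Theorem 6.5]{Lu}), so the character $\chi_X \colon a \mapsto \tr \pi_X(a)$ is well-defined, and a standard Wedderburn/density argument applied to the finite-dimensional quotient $\mathbb{H}/(\mathrm{Ann}(X) \cap \mathrm{Ann}(X^\bullet))$ shows that characters separate isomorphism classes of simple modules. Since $\chi_{X^\bullet}(a) = \chi_X(a^\bullet)$, the claim reduces to the trace identity $\chi_X\circ\bullet = \chi_X$.

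On the PBW basis $\{t_w p : w \in W,\ p \in S(V)\}$ one has $(t_w p)^\bullet = p\, t_{w^{-1}}$, and cyclicity of the trace reduces the identity to
\[
\chi_X(t_w p) = \chi_X(t_{w^{-1}} p) \qquad (w \in W,\ p \in S(V)).
\]
I would then decompose $X = \bigoplus_\lambda M(X,\lambda)$ into generalized $S(V)$-weight spaces, observing that only weights $\lambda$ fixed by $w$ contribute and that $p$ acts on $M(X,\lambda)$ as $p(\lambda)\Id$ plus a nilpotent endomorphism. Isolating the scalar part reduces the question to a $W$-module trace equality $\tr(t_w, M(X,\lambda)) = \tr(t_{w^{-1}}, M(X,\lambda))$ on each $w$-fixed weight space, and the crystallographic hypothesis of Definition~\ref{def graded affine} enters through the classical fact that every element of a Weyl group is conjugate to its inverse (equivalently, all irreducible characters of $W$ are rational-valued). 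Picking $u \in W$ with $uwu^{-1}=w^{-1}$, conjugation by $t_u$ then transports $t_w$ to $t_{w^{-1}}$ on the relevant weight spaces.

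The principal obstacle will be controlling the nilpotent corrections in the $S(V)$-action: one must verify that conjugating $p$ across $t_u$ produces only commutator terms, dictated by the PBW relations, that cancel under the cyclic trace. Since the generalized weight space decomposition is $W$-covariant and the nilpotent part of $S(V)$ transforms covariantly under $t_u$-conjugation ($t_u \cdot M(X,\lambda) = M(X,u\lambda)$), these corrections should match weight-space by weight-space, though the book-keeping is delicate. This is the step that genuinely relies on the extended affine Hecke algebra hypothesis, and explains why this lemma is the only place in the paper where the crystallographic assumption is needed.
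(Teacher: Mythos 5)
Your reduction of the lemma to the trace identity $\chi_X(t_w p)=\chi_X(t_{w^{-1}}p)$ is fine as far as it goes (finite-dimensionality, separation of simples by characters, and $\chi_{X^\bullet}=\chi_X\circ\bullet$ are all correct), but the step you propose for proving that identity fails. For $k\neq 0$, conjugation by $t_u$ does \emph{not} act on $S(V)$ by $u$: relation (3) of Definition \ref{def graded affine} gives $t_{s_\alpha}vt_{s_\alpha}^{-1}=s_\alpha(v)+k_\alpha\langle v,\alpha^{\vee}\rangle t_{s_\alpha}^{-1}$, so $t_u p t_u^{-1}=u(p)+(\text{terms }t_{w'}q\text{ with }\deg q<\deg p)$, and nothing in your sketch shows these correction terms have vanishing trace. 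Likewise the covariance you invoke, $t_u\cdot M(X,\lambda)=M(X,u\lambda)$, is simply false: already in type $A_1$ with $k_\alpha=1$ and $X=\mathbb{H}\otimes_{S(V)}\mathbb{C}_\lambda$ with $\lambda$ generic, one computes that the $s_\alpha\lambda$-weight vector is $t_{s_\alpha}\otimes 1_\lambda-\frac{k_\alpha}{\lambda(\alpha)}\,1\otimes 1_\lambda$, so $t_{s_\alpha}(1\otimes 1_\lambda)=t_{s_\alpha}\otimes 1_\lambda$ has a nonzero component in $M(X,\lambda)$; only the intertwining elements $\tau_u$ (which carry denominators and need not be defined or invertible at every weight) permute generalized weight spaces. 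So the reduction to a $\mathbb{C}[W]$-trace equality on $w$-fixed weight spaces does not go through, and since characters separate simple modules, the identity $\chi_X(t_wp)=\chi_X(t_{w^{-1}}p)$ for all simple $X$ is essentially equivalent to the lemma itself --- at its core the proposal restates the problem rather than solving it.

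Your diagnosis of where the crystallographic hypothesis enters is also misplaced: every element of \emph{any} finite Coxeter group is conjugate to its inverse (all irreducible characters of finite Coxeter groups are real-valued), so if your mechanism worked it would prove the lemma with no crystallographic assumption at all, contrary to the paper's explicit remark that this lemma is the one place the assumption is used. The paper's proof is one line plus a deep external input: $X^{\bullet}$ has the same generalized $S(V)$-weights as $X$ because $v^{\bullet}=v$, and by \cite[Theorem 5.5]{EM} together with Lusztig's reduction theorems \cite{Lu}, a simple module of a graded Hecke algebra coming from an extended affine Hecke algebra is determined up to isomorphism by its set of weights. That geometric theorem is precisely what the crystallographic hypothesis buys, and your argument supplies no substitute for it.
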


\begin{proof}
Let $X$ be a simple $\mathbb{H}$-module. Note that $X^{\bullet}$ has the same weight space as $X$ from the definition of $\bullet$. To prove the lemma, it suffices to show that if two simple modules have the same set of weights, then they are isomorphic, that is \cite[Theorem 5.5]{EM} and the Lusztig's reduction theorem \cite{Lu}. 
\end{proof}


\subsection{K\"unneth formula}

We have the following form of K\"unneth formula. For a proof, see for example \cite[Theorem 3.5.6]{Be} for similar arguments.

\begin{lemma} \label{lem kuneth form}
Let $J \subset \Pi$. Let $\overline{U}_1, \overline{U}_2$ be finite-dimensional $\mathbb{H}_J^{ss}$-modules and let $L_1, L_2$ be finite-dimensional $S(V_J^{\bot})$-representations. Then
\[ \mathrm{Ext}^i_{\mathbb{H}_J^{ss} \otimes S(V_J^{\bot})}(\overline{U}_1 \otimes L_1, \overline{U}_2 \otimes L_2) \cong \bigoplus_{j+k=i} \mathrm{Ext}^j_{\mathbb{H}_J^{ss}}(\overline{U}_1, \overline{U}_2) \otimes \mathrm{Ext}^k_{S(V_J^{\bot})}(L_1, L_2) .
\]
\end{lemma}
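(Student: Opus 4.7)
The plan is to follow the standard homological argument that reduces a Künneth-type formula for $\mathrm{Ext}$ to the ordinary Künneth formula for chain complexes over the field $\mathbb{C}$. The key input is the algebra isomorphism $\mathbb{H}_J \cong \mathbb{H}_J^{ss} \otimes_{\mathbb{C}} S(V_J^{\bot})$ from Notation \ref{not parabolic}, so the argument is really about Ext for a tensor product of two $\mathbb{C}$-algebras applied to tensor products of modules over each factor.

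First I would fix projective resolutions $P_{\bullet} \to \overline{U}_1$ over $\mathbb{H}_J^{ss}$ and $Q_{\bullet} \to L_1$ over $S(V_J^{\bot})$, chosen with finitely generated terms in each degree. For $Q_{\bullet}$ one can simply take the Koszul resolution of $L_1$ over the polynomial algebra $S(V_J^{\bot})$, which is finite and whose terms are free of finite rank. For $P_{\bullet}$ one uses that $\mathbb{H}_J^{ss}$ is Noetherian together with the fact that $\overline{U}_1$ is finite-dimensional. Then I would form the tensor product complex $P_{\bullet} \otimes_{\mathbb{C}} Q_{\bullet}$ and claim it is a projective resolution of $\overline{U}_1 \otimes L_1$ as a module over $\mathbb{H}_J^{ss} \otimes S(V_J^{\bot})$.

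Two checks are needed here. Exactness of the total complex follows from the Künneth formula for chain complexes over the field $\mathbb{C}$: since $\mathrm{Tor}^{\mathbb{C}}$ vanishes identically, $H_n(P_{\bullet} \otimes Q_{\bullet}) \cong \bigoplus_{p+q=n} H_p(P_{\bullet}) \otimes H_q(Q_{\bullet})$, which is $\overline{U}_1 \otimes L_1$ in degree $0$ and vanishes otherwise. For projectivity of each $P_p \otimes Q_q$, one uses the adjunction
\[
\mathrm{Hom}_{\mathbb{H}_J^{ss} \otimes S(V_J^{\bot})}(P \otimes Q, -) \cong \mathrm{Hom}_{\mathbb{H}_J^{ss}}(P, \mathrm{Hom}_{S(V_J^{\bot})}(Q, -))
\]
to see that tensor products of projectives over the two factors give projectives over the tensor product algebra.

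Next I would apply $\mathrm{Hom}_{\mathbb{H}_J^{ss} \otimes S(V_J^{\bot})}(-, \overline{U}_2 \otimes L_2)$ to this resolution. Because each $P_p$ and $Q_q$ is finitely generated projective over its respective algebra, the natural map
\[
\mathrm{Hom}_{\mathbb{H}_J^{ss}}(P_p, \overline{U}_2) \otimes_{\mathbb{C}} \mathrm{Hom}_{S(V_J^{\bot})}(Q_q, L_2) \longrightarrow \mathrm{Hom}_{\mathbb{H}_J^{ss} \otimes S(V_J^{\bot})}(P_p \otimes Q_q, \overline{U}_2 \otimes L_2)
\]
is an isomorphism (it reduces to the case of free modules of finite rank, where both sides are the corresponding matrix space). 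This identifies the Hom-double complex with the tensor product of the two Hom-complexes, and a second application of the Künneth formula for cochain complexes over $\mathbb{C}$ yields the desired decomposition.

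The main obstacle is really just making sure the finite-generation hypotheses needed for the Hom-tensor identity are available throughout; once one commits to resolutions with finitely generated terms in each degree (which is possible here because both $S(V_J^{\bot})$ and $\mathbb{H}_J^{ss}$ are Noetherian and the modules in question are finite-dimensional), every remaining step is a purely formal computation with double complexes over a field, with no higher $\mathrm{Tor}$-corrections appearing.
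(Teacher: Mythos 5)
Your argument is correct and is essentially the standard one the paper invokes: the paper gives no proof of this lemma beyond a reference to Benson's K\"unneth theorem (\cite[Theorem 3.5.6]{Be}), whose proof proceeds exactly as you do, by tensoring finitely generated projective resolutions over the two factors, using the Hom-tensor interchange for finitely generated projectives, and applying the K\"unneth formula over the field $\mathbb{C}$. So there is no gap and no genuine divergence from the paper's (cited) approach.
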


\section{Generalized standard modules} \label{s gss ext}
The main goal of this section is to see some extensions between standard modules can be constructed from extensions of corresponding tempered modules for a parabolic subalgebra. Then we shall make some constructions and further study in Section \ref{s quotient gsm}.

\subsection{ Tempered modules} \label{ss temp}

Since $V^{\vee}$ admits a natural real form, we can talk about the real part of elements in $V^{\vee}$.

\begin{definition} \label{def temp ds}
Let $J \subset \Pi$. A (not necessarily irreducible) $\mathbb{H}_J$-module $U$ is said to be an $\mathbb{H}_J$-{\it tempered module} if the real part of any weight $\gamma^{\vee} \in V^{\vee}$ of $X$ has the form:
\begin{align} \label{eqn temp}
   \mathrm{Re} \gamma^{\vee} = \sum_{\alpha \in J} a_{\alpha} \alpha^{\vee} +\sum_{\alpha \in \Pi \setminus J}b_{\alpha} \omega_{\alpha}^{\vee}, \quad \mbox{where } a_{\alpha} \leq 0,\ b_{\alpha} >0 .
\end{align}
If $U$ is an irreducible $\mathbb{H}_J$-tempered module, denote 
\[  \nu(J, U) = \sum_{\alpha \in \Pi \setminus J} b_{\alpha} \omega^{\vee}_{\alpha} ,
\]
where $ \sum_{\alpha \in \Pi \setminus J} b_{\alpha} \omega^{\vee}_{\alpha}$ is the second term of the left hand side of (\ref{eqn temp}). Note that our assumption of the irreducibility on $U$ assures that the term $\sum_{\alpha \in \Pi \setminus J} b_{\alpha} \omega^{\vee}_{\alpha} $ is independent of the choice of a weight for $U$. 

An $\mathbb{H}$-module $X$ is said to be a {\it discrete series} if $X$ is an $\mathbb{H}$-tempered module with all the inequalities for $a_{\alpha}$ in (\ref{eqn temp}) being strict (i.e. $a_{\alpha} <0$ for all $\alpha \in \Pi$). 
\end{definition}

Here our terminology of $\mathbb{H}_J$-tempered modules follows \cite{KR}. It sometimes does not coincide with other definitions in the literature (e.g. \cite[Definition 1.4]{Ev}). Our terminology is more convenient for discussions, although it is not quite a direct analog of tempered representations of $p$-adic groups.

\subsection{Generalized standard modules and the Langlands classification}

For any $J \subset \Pi$ and any $\mathbb{H}_J$-module $U$, we sometimes write $I(J,U)$ for $\mathbb{H} \otimes_{\mathbb{H}_J}U$. For any $\mathbb{H}$-module $X$, write $\mathrm{Res}_{\mathbb{H}_J} X$ to be the restriction of $X$ to an $\mathbb{H}_J$-module. (The notation $\mathrm{Res}$ will also be similarly used for other algebras such as $\mathbb{H}^{ss}_J$.)

\begin{definition} \label{def generalized St}
Let $\Xi_L^g$ be the collection of all pairs $(J,U)$ with $J \subset \Pi$ and $U$ being a finite-dimensional indecomposable $\mathbb{H}_J$-tempered module. Let $\Xi_L$ be the subset of $\Xi_L^g$ containing all pairs $(J, U)$ with $U$ being an {\it irreducible} $\mathbb{H}_J$-tempered module. For $(J, U) \in \Xi_L$, $I(J,U)$ is usually called a {\it standard module} in the literature. For $(J,U) \in \Xi^g_L$, we shall call the module $I(J,U)$ to be a {\it generalized standard module}. 


For $(J,U) \in \Xi_L$, let $L(J,U)$ be the unique simple quotient of $I(J,U)$ and let $N(J,U)$ be the unique maximal proper submodule of $I(J,U)$. For any irreducible $\mathbb{H}$-module $X$, $X$ is isomorphic to $L(J,U)$ for some $(J,U) \in \Xi_L$. (See Langlands classification \cite{Ev, KR} for the details.) For such $X$, denote $\nu(X)=\nu(J,U)$. 

For $(J,U) \in \Xi_L^g$, we can also define $\nu(J,U)=\nu(J,U')$, where $U'$ is a composition factor of $U$. It is well-defined by Lemma \ref{lem com factor same nu} below. We shall extend the notion of $L(J,U)$ to all $(L,U) \in \Xi_L^g$ in Definition \ref{def quotient gen} later. 

\end{definition}

\begin{lemma} \label{lem com factor same nu}
Let $(J,U) \in \Xi_L^g$. Let $U_1, U_2$ be (simple) composition factors of $U$. Then $\nu(J, U_1)=\nu(J,U_2)$. 
\end{lemma}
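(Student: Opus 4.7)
The plan is to reduce the problem to a central-character consideration, then exploit the $W_J$-invariance of $\nu(J,-)$.

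First, I would argue that any two simple composition factors $U_1, U_2$ of the indecomposable, finite-dimensional $\mathbb{H}_J$-module $U$ share a common $\mathbb{H}_J$-central character. The center $Z(\mathbb{H}_J) \cong S(V)^{W_J}$ acts on $U$, and decomposing $U$ into generalized eigenspaces for $Z(\mathbb{H}_J)$ yields a direct sum decomposition by $\mathbb{H}_J$-submodules; indecomposability then forces a single generalized central character. (Equivalently, iterate \leref{em cc conjugate} through a composition series, observing that a non-split extension between two simples with different central characters is impossible.) Thus the weights $\mathrm{Wgt}(U_1)$ and $\mathrm{Wgt}(U_2)$ lie in a common $W_J$-orbit $\mathcal O \subset V^{\vee}$.

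Next I would record the key invariance. For $\alpha \in \Pi \setminus J$ and $\beta \in J$, the identity
\[ (s_\beta \omega_\alpha^{\vee})(v) = \omega_\alpha^{\vee}(v) - \langle v, \beta^{\vee}\rangle \omega_\alpha^{\vee}(\beta) = \omega_\alpha^{\vee}(v) \]
(since $\omega_\alpha^{\vee}(\beta)=\delta_{\alpha,\beta}=0$) shows $\omega_\alpha^{\vee}$ is fixed by every generator of $W_J$. Hence $\nu(J,U_i) \in V_J^{\vee,\bot}$ is $W_J$-invariant. Moreover, since $s_\beta$ ($\beta \in J$) preserves $V_J^{\vee}=\mathrm{Span}\{\alpha^{\vee}: \alpha \in J\}$ and fixes $V_J^{\vee,\bot}$ pointwise, the direct sum decomposition $V^{\vee} = V_J^{\vee} \oplus V_J^{\vee,\bot}$ is $W_J$-stable and commutes with taking real parts.

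Finally, pick weights $\gamma_i^{\vee} \in \mathrm{Wgt}(U_i)$, and choose $w \in W_J$ with $\gamma_1^{\vee} = w\gamma_2^{\vee}$. Taking real parts and using Definition \ref{def temp ds},
\[ \textstyle\sum_{\alpha \in J} a_\alpha^{(1)} \alpha^{\vee} + \nu(J,U_1) \;=\; w\Bigl(\sum_{\alpha \in J} a_\alpha^{(2)} \alpha^{\vee}\Bigr) + \nu(J,U_2). \]
The left-hand $V_J^{\vee}$-summand and $w(\sum a_\alpha^{(2)}\alpha^{\vee})$ both lie in $V_J^{\vee}$, while the two $\nu$-terms lie in $V_J^{\vee,\bot}$; projecting onto the $V_J^{\vee,\bot}$-component yields $\nu(J,U_1) = \nu(J,U_2)$. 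The only step requiring any care is the first one, which hinges on the standard fact that $Z(\mathbb{H}_J)$ acts with a single generalized eigensystem on an indecomposable finite-dimensional module; everything else is a direct computation from the definitions.
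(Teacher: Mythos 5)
Your proof is correct, but it takes a genuinely different route from the paper's. The paper never mentions central characters here: it writes simple $\mathbb{H}_J$-modules as $\overline{U}\otimes L$ under $\mathbb{H}_J\cong\mathbb{H}_J^{ss}\otimes S(V_J^{\bot})$, uses the K\"unneth formula (Lemma \ref{lem kuneth form}) to get $\mathrm{Ext}^1_{\mathbb{H}_J}$-vanishing between simples with distinct $\nu$, and then runs an induction on the length of $U$, taking a maximal submodule $U'$ with constant $\nu$ and splitting it off to contradict indecomposability. You instead note that the center $S(V)^{W_J}$ acts on the finite-dimensional indecomposable $U$ with a single generalized central character, so all weights of all composition factors lie in one $W_J$-orbit, and then exploit that the $V_J^{\vee,\bot}$-component of (the real part of) a weight is a $W_J$-invariant because the $\omega_\alpha^{\vee}$, $\alpha\in\Pi\setminus J$, are fixed by $W_J$ while $V_J^{\vee}$ is $W_J$-stable. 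Your argument is more elementary (no homological algebra) and in fact proves something slightly stronger, namely that all composition factors of $U$ share the same $\mathbb{H}_J$-central character, not merely the same $\nu$; the paper's Ext-splitting induction has the advantage of being the same template it reuses elsewhere (e.g.\ in Lemma \ref{lem decompose for lang class}). One small caveat: your parenthetical alternative --- ``iterate Lemma \ref{lem cc conjugate} through a composition series'' --- is not quite a proof as stated for modules of length greater than two, since Ext-vanishing between consecutive subquotients alone does not immediately force a common central character; one needs the block (generalized eigenspace) decomposition or an induction of the paper's type. Since your primary argument is exactly that block decomposition, the proof stands.
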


\begin{proof}
Recall that $\mathbb{H}_J \cong \mathbb{H}_J^{ss} \otimes S(V_J^{\bot})$. Note that any irreducible $\mathbb{H}_J$-module is isomorphic to $\overline{U} \otimes L$ for some irreducible $\mathbb{H}_J$-module $\overline{U}$ and $1$-dimensional $S(V_J^{\bot})$-module $L$. For two simple $\mathbb{H}_J$ modules $\overline{U}_1 \otimes L_1$ and $\overline{U}_2 \otimes L_2$, if $L_1 \not\cong L_2$ (equivalently $\nu(J, \overline{U}_1 \otimes L_1) \neq \nu(J, \overline{U}_2 \otimes L_2)$), then 
\begin{align} \label{eqn diff nu ext 0}
 \mathrm{Ext}^1_{\mathbb{H}_J}(\overline{U}_1 \otimes L_1, \overline{U}_2 \otimes L_2) =0 
\end{align}
by Lemma \ref{lem kuneth form} and $\mathrm{Ext}^1_{S(V_J^{\bot})}(L_1, L_2)=0$. 

Now we consider $(J,U) \in \Xi_L^g$ and prove by an induction on the length of $U$. If the length of $U$ is $1$, then the statement is clear. Let $U'$ be a maximal submodule of $U$ such that any composition factors $U_1, U_2$ of $U'$ satisfy $\nu(J,U_1)=\nu(J,U_2)$. Note that $U'\neq 0$ since simple submodules of $U$ always satisfy that property. Now we have a short exact sequence of the following form:
\[ 0 \rightarrow U' \rightarrow U \rightarrow U/U' \rightarrow 0 .\]
Since $U/U'$ is of finite length, we can write $U/U'$ as $U/U' = N_1 \oplus \ldots \oplus N_i$, where all $N_j$ are indecomposable modules. By the inductive hypothesis, all $N_j$ have the property that all the composition factors $N'_a, N'_b$ of $N_i$ satisfy $\nu(J, N'_a)=\nu(J,N'_b)$. Now by using the maximality of our choice of $U'$ and using (\ref{eqn diff nu ext 0}), we have 
\[  \mathrm{Ext}^1_{S(V_J^{\bot})}(U', N_j)=0  \]
for all $j$. Now we have that $U \cong U' \oplus N_1\oplus \ldots \oplus N_i$. By the indecomposability of $U$, we have $U=U'$ as desired.
\end{proof}

We recall the dominance ordering on $V_0^{\vee}$. For any two elements $\gamma^{\vee}_1, \gamma^{\vee}_2 \in V^{\vee}_0$, write $\gamma^{\vee}_1 \leq \gamma^{\vee}_2$ if 
\[   \gamma_2^{\vee}-\gamma_1^{\vee} =\sum_{\alpha \in \Pi} a_{\alpha} \alpha^{\vee} \]
for some $a_{\alpha} \geq 0$. We write $\gamma^{\vee}_1 < \gamma_2^{\vee}$ if $\gamma^{\vee}_1 \leq \gamma^{\vee}_2$ and $\gamma^{\vee}_1 \neq \gamma^{\vee}_2$. 

We recall some results related to the Langlands classification. We need more notations. For any $\gamma^{\vee} \in V^{\vee}$, the real part of $\gamma^{\vee}$ can be uniquely written as 
\begin{align} \label{eqn unique express} \mathrm{Re} \gamma^{\vee} = \sum_{\alpha \in J} a_{\alpha} \alpha^{\vee} +\sum_{\alpha \in \Pi \setminus J} b_{\alpha} \omega^{\vee}_{\alpha},
\end{align}
for some $J\subset \Pi$, $a_{\alpha}<0$ and $b_{\alpha} \geq 0$ (\cite[Ch VIII Lemma 8.59]{Kn0}, c.f. $\nu(J,U)$ in Definition \ref{def temp ds}). For such $\gamma^{\vee}$, define 
\[  \nu(\gamma^{\vee}) =\sum_{\alpha \in \Pi \setminus J} b_{\alpha} \omega^{\vee}_{\alpha} .
\]

We shall freely use the fact that $\langle \omega_{\alpha}, \omega_{\beta}^{\vee} \rangle \geq 0$ for $\alpha, \beta \in \Pi$ \cite[Ch VIII Lemma 8.57]{Kn0} below.
\begin{lemma} \label{lem no WJ conj}
Let $(J,U) \in \Xi_L^g$ and let $\gamma^{\vee} \in V^{\vee}$. If $\nu(\gamma^{\vee}) < \nu(J,U)$, then $\gamma^{\vee}$ is not $W_J$-conjugate to any weight of $U$. 
\end{lemma}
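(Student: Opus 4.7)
The plan is to prove the contrapositive. Suppose $\gamma^\vee = w \delta^\vee$ for some $w \in W_J$ and some weight $\delta^\vee$ of $U$; the aim is to show $\nu(\gamma^\vee) \not< \nu(J,U)$. Since $\nu$ depends only on real parts, I pass to $V_0^\vee$. By the $\mathbb{H}_J$-tempered condition combined with \leref{com factor same nu}, every weight $\delta^\vee$ of $U$ decomposes as $\Re\delta^\vee = \mu + \lambda$, where $\lambda := \nu(J,U) = \sum_{\alpha \notin J} b_\alpha \omega_\alpha^\vee$ and $\mu = \sum_{\alpha \in J} a_\alpha \alpha^\vee$ with $a_\alpha \leq 0$. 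A direct computation shows $s_\beta \omega_\alpha^\vee = \omega_\alpha^\vee - \langle \beta, \omega_\alpha^\vee\rangle \beta^\vee = \omega_\alpha^\vee$ whenever $\beta \in J$ and $\alpha \notin J$, so $W_J$ fixes $\lambda$ pointwise, giving $\Re\gamma^\vee = w\mu + \lambda$ with $w\mu$ in the real span of $\{\alpha^\vee : \alpha \in J\}$.

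The strategy is then a norm comparison. Fix a $W$-invariant positive definite inner product $(\,,\,)$ on $V_0^\vee$. The key orthogonality I would establish is that $(\omega_\alpha^\vee, \beta^\vee)$ is a positive scalar multiple of $\delta_{\alpha\beta}$ --- this follows from the compatibility of $(\,,\,)$ with the natural pairing $V_0 \times V_0^\vee \to \Rset$ via the $W$-equivariant isomorphism $V_0 \cong V_0^\vee$, together with the defining property $\langle \beta, \omega_\alpha^\vee\rangle = \delta_{\alpha\beta}$. In particular $\lambda$ is orthogonal to the real span of $\{\alpha^\vee : \alpha \in J\}$, hence $\lambda \perp w\mu$, and the $W$-invariance of the norm yields $\|\Re\gamma^\vee\|^2 = \|\mu\|^2 + \|\lambda\|^2$. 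Now view $\nu(\cdot)$ as the orthogonal projection onto the closed convex cone $C^+ := \sum_{\alpha \in \Pi}\Rset_{\geq 0}\omega_\alpha^\vee$ (identification with the Langlands-style $\nu$ is elementary given the orthogonality above). Moreau's theorem gives the orthogonal decomposition $\Re\gamma^\vee = \nu(\gamma^\vee) + q$ with $q$ lying in the polar cone $-\sum_{\alpha \in \Pi}\Rset_{\geq 0}\alpha^\vee$. Since $\lambda \in C^+$, the minimizing property of the projection gives $\|q\| \leq \|\Re\gamma^\vee - \lambda\| = \|\mu\|$, whence by Pythagoras
\[
\|\nu(\gamma^\vee)\|^2 = \|\Re\gamma^\vee\|^2 - \|q\|^2 \geq \|\lambda\|^2 = \|\nu(J,U)\|^2.
\]

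The main obstacle I expect is a strict-monotonicity lemma for the norm on dominance-ordered elements of $C^+$: whenever $\zeta_1, \zeta_2 \in C^+$ satisfy $\zeta_1 < \zeta_2$, then $\|\zeta_1\| < \|\zeta_2\|$. I would prove it by expanding $\|\zeta_2\|^2 - \|\zeta_1\|^2 = (\zeta_2 - \zeta_1, \zeta_1 + \zeta_2)$ and writing $\zeta_2 - \zeta_1 = \sum_{\alpha \in S} k_\alpha \alpha^\vee$ on its nonempty support $S$ with $k_\alpha > 0$; vanishing of this inner product would force the $\omega_\alpha^\vee$-coefficients of $\zeta_1 + \zeta_2$ (both nonnegative) to vanish for every $\alpha \in S$, and then writing out the $\omega_\gamma^\vee$-coefficients ($\gamma \in S$) of $\zeta_2 - \zeta_1$ yields a nonzero vector $(k_\alpha)_{\alpha \in S}$ in the kernel of the sub-Cartan matrix indexed by $S$, contradicting its non-degeneracy as the Cartan matrix of a sub-root-system. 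Applying this monotonicity with $\zeta_1 = \nu(\gamma^\vee)$, $\zeta_2 = \nu(J,U)$, the hypothesis $\nu(\gamma^\vee) < \nu(J,U)$ forces $\|\nu(\gamma^\vee)\| < \|\nu(J,U)\|$, contradicting the displayed inequality. This closes the contrapositive.
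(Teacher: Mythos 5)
Your proof is correct, but it takes a genuinely different route from the paper's. The paper works directly with two expansions of $\mathrm{Re}\,\gamma^{\vee}$ --- the Langlands form $\sum_{\alpha\in J'}a_{\alpha}\alpha^{\vee}+\nu(\gamma^{\vee})$ and the expansion in the basis $\{\alpha^{\vee}:\alpha\in J\}\cup\{\omega^{\vee}_{\alpha}:\alpha\notin J\}$ --- noting that $W_J$-conjugacy to a weight of $U$ would force the $\omega$-part of the second expansion to equal $\nu(J,U)$; pairing against the fundamental weights $\omega_{\alpha_0}$, $\alpha_0\in J$, together with $\nu(\gamma^{\vee})\leq\nu(J,U)$, then shows all coefficients $a'_{\alpha_0}$ are $\leq 0$, and the uniqueness behind (\ref{eqn unique express}) (Knapp \cite{Kn0}) gives $\nu(\gamma^{\vee})=\nu(J,U)$, contradicting strictness. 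You instead argue metrically via the contrapositive: after the same reduction $\mathrm{Re}\,\gamma^{\vee}=w\mu+\nu(J,U)$ (which indeed rests on every weight of $U$ having $\omega$-part exactly $\nu(J,U)$, i.e.\ the independence remark in Definition \ref{def temp ds} combined with Lemma \ref{lem com factor same nu}), you identify $\nu(\cdot)$ with the orthogonal projection onto the cone generated by the fundamental coweights --- legitimate, since the Langlands decomposition is orthogonal for a $W$-invariant form and Moreau's uniqueness applies --- then the minimizing property of the projection plus Pythagoras gives $\|\nu(\gamma^{\vee})\|\geq\|\nu(J,U)\|$, which you play off against a strict norm-monotonicity lemma along the dominance order inside the cone, proved by nondegeneracy of sub-Cartan matrices (your equality analysis could be shortened: once the $\omega_{\alpha}^{\vee}$-coefficients of $\zeta_1$ and $\zeta_2$ vanish on the support $S$, one gets $(\zeta_2-\zeta_1,\zeta_i)=0$ for $i=1,2$, hence $\|\zeta_2-\zeta_1\|^2=0$ directly). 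The paper's argument buys brevity and stays entirely within the formal Langlands-lemma bookkeeping it already cites; yours buys a cleaner conceptual picture of the $\nu$-invariant as a cone projection and a slightly stronger quantitative output ($W_J$-conjugacy to a weight of $U$ forces $\|\nu(\gamma^{\vee})\|\geq\|\nu(J,U)\|$), at the cost of introducing the invariant inner product and the two auxiliary facts.
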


\begin{proof}
By definitions, there exists $J' \subset \Pi$ such that 
\[  \mathrm{Re} \gamma^{\vee} = \sum_{\alpha \in J' } a_{\alpha} \alpha^{\vee} + \nu(\gamma^{\vee}) 
\]
for $a_{\alpha} \leq 0$ and $b_{\alpha}>0$. By simple linear algebra we can also write
\[ \mathrm{Re} \gamma^{\vee} =\sum_{\alpha \in J} a_{\alpha}' \alpha^{\vee} + \sum_{\alpha \in \Pi \setminus J} b_{\alpha}'\omega_{\alpha}^{\vee} ,\]
where $a_{\alpha}' ,b_{\alpha}' \in \mathbb{R}$. Set $\nu'=\sum_{\alpha \in \Pi \setminus J} b_{\alpha}' \omega_{\alpha}^{\vee}$. To prove the lemma, it suffices to show that $\nu'$ is not equal to $\nu(J,U)$. 
For $\alpha_0 \in J$, 
\[ a_{\alpha_0}'+\langle \omega_{\alpha_0}, \nu' \rangle= \langle \omega_{\alpha_0}, \mathrm{Re} \gamma^{\vee} \rangle \leq\langle \omega_{\alpha_0}, \nu(\gamma^{\vee})\rangle  \leq \langle \omega_{\alpha_0}, \nu(J,U) \rangle .
\]
Hence if $\nu'=\nu(J,U)$, then all $a_{\alpha_0'}\leq 0$. By the uniqueness property in the expression (\ref{eqn unique express}), we have $J=J'$ and so $\nu(\gamma^{\vee})=\nu'=\nu(J,U)$. This gives a contradiction. Hence $\nu' \neq \nu(J,U)$ as desired.
\end{proof}


\begin{lemma} \label{lem decompose for lang class}
Let $(J, U) \in \Xi_L^g$. Then
\[  \mathrm{Res}_{\mathbb{H}_J} I(J, U) \cong U \oplus Y,
\]
where $Y$ is an $\mathbb{H}_J$-module such that for any weight $\gamma^{\vee}$ of $Y$, $\nu(\gamma^{\vee}) < \nu(J, U)$. Moreover, 
\begin{enumerate}
\item any composition factors of $Y$ have $\mathbb{H}_J$-central characters different from any composition factors of $U$,
\item Suppose $(J,U) \in \Xi_L$. Then for any composition factors $Z$ of $I(J,U)$, if $Z$ is not isomorphic to $L(J,U)$, then $\nu(Z) < \nu(J,U)$. 
\end{enumerate}
\end{lemma}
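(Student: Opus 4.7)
The plan is to decompose $\mathrm{Res}_{\mathbb{H}_J} I(J,U)$ into generalized $\mathbb{H}_J$-central character blocks and identify the block whose central characters match those of $U$ with the submodule $1 \otimes U$ itself. The key input is a weight-space analysis via the PBW-type decomposition of $\mathbb{H}$ as a free right $\mathbb{H}_J$-module.

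By Lusztig's theorem, $\mathbb{H} = \bigoplus_{w\in W^J} t_w\,\mathbb{H}_J$ as a free right $\mathbb{H}_J$-module, hence as a vector space
\[ I(J,U) = \bigoplus_{w \in W^J} t_w \otimes U. \]
An induction on $\ell(w)$ using the defining relation $t_{s_\alpha} v = s_\alpha(v)\,t_{s_\alpha} + k_\alpha\langle v,\alpha^\vee\rangle$ shows that the $V$-action on $I(J,U)$ is triangular with respect to the Bruhat order on $W^J$:
\[ v\cdot(t_w \otimes u) \in t_w\otimes(w^{-1}v)\cdot u + \sum_{w'\in W^J,\,w'<w} t_{w'}\otimes U. \]
On the associated graded of the Bruhat-length filtration, the generalized $V$-weights of the $w$-th piece are $\{w\gamma^\vee : \gamma^\vee \in \mathrm{Wgt}(U)\}$ with multiplicities inherited from $U$.

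Next I would compute the generalized $\mathbb{H}_J$-central characters of $\mathrm{Res}_{\mathbb{H}_J} I(J,U)$, using that each such central character is the $W_J$-orbit of a generalized $V$-weight. The $w=e$ summand $1\otimes U$ contributes all the central characters of the composition factors of $U$. For $w\ne e$ in $W^J$, a direct dominance calculation (using $W_J$-invariance of $\nu(J,U)$, the tempered condition $a_\alpha\le 0$, and positivity $w\alpha^\vee>0$ for $\alpha\in J$) gives $\nu(w\gamma^\vee)<\nu(J,U)$, and then Lemma \ref{lem no WJ conj} implies $w\gamma^\vee$ is not $W_J$-conjugate to any weight of $U$. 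Decomposing into generalized central character blocks, the sum of blocks matching central characters of $U$ has dimension $\dim U$ and contains the submodule $1\otimes U$, hence equals it; its complementary sum $Y$ then satisfies (1) together with the required weight bound $\nu(\delta^\vee)<\nu(J,U)$ for every weight $\delta^\vee$ of $Y$.

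For (2), assume $U$ is irreducible, and let $Z\not\cong L(J,U)$ be a composition factor of $I(J,U)$; then $Z$ is a composition factor of the unique maximal proper submodule $N(J,U)$. The key point is that the subspace of $\mathrm{Res}_{\mathbb{H}_J}N(J,U)$ spanned by generalized weight vectors with weight $\gamma^\vee$ satisfying $\nu(\gamma^\vee) = \nu(J,U)$ is an $\mathbb{H}_J$-submodule of the summand $1\otimes U \cong U$; irreducibility forces it to be $0$ or $U$, and the latter would give $U \subset N(J,U)$ and hence $N(J,U) \supseteq \mathbb{H}\cdot U = I(J,U)$, contradicting properness. Thus every weight of every composition factor of $N(J,U)$ has $\nu<\nu(J,U)$, and since $\nu(Z) = \max_{\gamma^\vee\in \mathrm{Wgt}(Z)}\nu(\gamma^\vee)$ by the Langlands characterization of the parameter of $Z$, we conclude $\nu(Z)<\nu(J,U)$. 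The main technical obstacle lies in making the Bruhat-order triangularity precise enough to justify that leading weights in the filtration pieces correctly identify the central characters, and in carrying the $\nu = \nu(J,U)$ weight-subspace argument in part (2) carefully through a composition series of $N(J,U)$.
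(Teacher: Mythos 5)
Your argument is correct, and it reconstructs explicitly what the paper simply cites: the paper's proof obtains the short exact sequence $0 \to U \to \mathrm{Res}_{\mathbb{H}_J} I(J,U) \to Y \to 0$ from Frobenius reciprocity, invokes the proof of the Langlands classification in \cite{Ev} (and \cite[Theorem 2.4]{KR}) for the bound $\nu(\gamma^{\vee}) < \nu(J,U)$ on weights of $Y$, and then splits the sequence via $\mathrm{Ext}^1_{\mathbb{H}_J}(Y,U)=0$ after separating central characters by Lemma \ref{lem no WJ conj}. You instead work from the PBW decomposition $\mathbb{H} = \bigoplus_{w \in W^J} t_w \mathbb{H}_J$, use Bruhat--length triangularity of the $S(V)$-action to read off the generalized weights $w(\mathrm{Wgt}(U))$ on the associated graded, and split by decomposing into generalized central-character blocks rather than by an $\mathrm{Ext}$-vanishing argument --- an equivalent mechanism, since the blocks are exactly sums of generalized weight spaces over $W_J$-orbits. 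Your part (2), via $N(J,U) \cap (1 \otimes U)$ being an $\mathbb{H}_J$-submodule of the irreducible $U$ and hence zero, is a clean direct argument for what the paper again defers to \cite{Ev, KR}. Two caveats: the step you call ``a direct dominance calculation,'' namely the \emph{strict} inequality $\nu(w\lambda^{\vee}) < \nu(J,U)$ for $e \neq w \in W^J$ and $\lambda^{\vee} \in \mathrm{Wgt}(U)$, is precisely Langlands' combinatorial lemma (the geometric lemmas cited in the paper, cf. \cite[Ch VIII Lemmas 8.56, 8.59]{Kn0}); your parenthetical hints give the weak inequality but the strictness for $w \neq e$ needs the monotonicity of $\nu(\cdot)$ plus an argument as in Lemma \ref{lem no WJ conj}, so either prove it or cite it as the paper does. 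Also, the phrase ``$\nu(Z) = \max_{\gamma^{\vee}} \nu(\gamma^{\vee})$'' is loose (the dominance order is only partial); what you actually need, and what your own argument supplies when applied to $I(J_2,U_2)$, is that some weight of $Z \cong L(J_2,U_2)$ attains $\nu(J_2,U_2)$, which then sits strictly below $\nu(J,U)$.
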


\begin{proof}
This is the consequence of the proof for the Langlands classification \cite{Ev}. We provide some explanations. 

By Frobenius reciprocity, we have the following short exact sequence for $\mathbb{H}_J$-modules:
\[ 0 \rightarrow U \rightarrow \mathrm{Res}_{\mathbb{H}_J} I(J, U) \rightarrow Y  \rightarrow 0 ,\]
where $Y = \mathrm{Res}_{\mathbb{H}_J} I(J, U)/U$. Then using the arguments as in \cite{Ev} which use Geometric Lemmas of Langlands (also see proof of \cite[Theorem 2.4]{KR} and proof of \cite[Proposition 7.1.81]{Ch}), we have that for all the weights $\gamma^{\vee}$ of $Y$, $\nu(\gamma^{\vee}) <\nu(J,U)$. (We remark that while $U$ is taken to be irreducible in \cite{Ev} and \cite[Theorem 2.4]{KR}, their argument still works by our Lemma \ref{lem com factor same nu}.) This implies that any composition factors of $Y$ have different $\mathbb{H}_J$-central characters from any composition factors of $U$ since any weight of $Y$ is not $W_J$-conjugate to that of $N$ (Lemma \ref{lem no WJ conj}). Hence 
\[ \mathrm{Ext}^1_{\mathbb{H}_J}(Y, U)=0 \]
and so $\mathrm{Res}_{\mathbb{H}_J} I(J, U) \cong U \oplus N$ as desired. Other assertions and details can be referred to \cite{Ev, KR}.
\end{proof}

\subsection{Extensions}

The following vanishing result is useful in computing some extensions.

\begin{proposition} \label{prop ext lower langlands para}
Let $(J_1,U_1), (J_2, U_2) \in \Xi_L$. If either $\nu(J_1, U_1)$ and $\nu(J_2, U_2)$ are incomparable or $\nu(J_2, U_2) < \nu(J_1, U_1)$, then 
\[ \mathrm{Ext}^i_{\mathbb{H}}(I(J_1, U_1), L(J_2, U_2)) =\mathrm{Ext}^i_{\mathbb{H}}(I(J_1, U_1), I(J_2, U_2))=0 .\]
\end{proposition}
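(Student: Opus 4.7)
The natural approach is Frobenius reciprocity combined with a central character comparison. Since $\mathbb{H}$ is a free right $\mathbb{H}_{J_1}$-module (by the PBW-type decomposition $\mathbb{H}=\bigoplus_{w\in W^{J_1}} t_w\,\mathbb{H}_{J_1}$), induction from $\mathbb{H}_{J_1}$ is exact and preserves projectives. Derived Frobenius reciprocity then gives, for any $\mathbb{H}$-module $X$,
\[
\mathrm{Ext}^i_{\mathbb{H}}\bigl(I(J_1,U_1),X\bigr)\;\cong\;\mathrm{Ext}^i_{\mathbb{H}_{J_1}}\bigl(U_1,\mathrm{Res}_{\mathbb{H}_{J_1}}X\bigr).
\]
I would apply this to both $X=I(J_2,U_2)$ and $X=L(J_2,U_2)$.

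By \leref{cc conjugate} and the long exact sequences of Ext applied to a composition series of $\mathrm{Res}_{\mathbb{H}_{J_1}}X$, it suffices to show that every simple composition factor $T$ of $\mathrm{Res}_{\mathbb{H}_{J_1}}X$ has $\mathbb{H}_{J_1}$-central character distinct from that of $U_1$. The weights of $T$ are contained in the $S(V)$-weights of $X$, which in turn lie among those of $I(J_2,U_2)$; restricting $I(J_2,U_2)$ to $\mathbb{H}_{J_2}$ and applying \leref{decompose for lang class} shows every such weight $\gamma^{\vee}$ satisfies $\nu(\gamma^{\vee})\leq\nu(J_2,U_2)$.

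Meanwhile, any weight $\mu^{\vee}$ of $U_1$ has $\nu(\mu^{\vee})=\nu(J_1,U_1)$, so $U_1$'s central character is the $W_{J_1}$-orbit of such a $\mu^{\vee}$. Suppose for contradiction $T$ and $U_1$ had the same central character; some weight $\gamma^{\vee}$ of $T$ would then be $W_{J_1}$-conjugate to a weight $\mu^{\vee}$ of $U_1$. Using that $W_{J_1}$ fixes $V_{J_1}^{\vee,\perp}\ni\nu(J_1,U_1)$ pointwise, and pairing with the fundamental weights $\omega_{\alpha_0}$ for $\alpha_0\in\Pi\setminus J_1$ against both the $W_{J_1}$-conjugate description and the canonical Langlands decomposition of $\mathrm{Re}\gamma^{\vee}$, exactly in the style of the proof of \leref{no WJ conj}, one obtains
\[
\langle\omega_{\alpha_0},\nu(J_1,U_1)\rangle\;\leq\;\langle\omega_{\alpha_0},\nu(\gamma^{\vee})\rangle\;\leq\;\langle\omega_{\alpha_0},\nu(J_2,U_2)\rangle
\]
for all $\alpha_0\in\Pi\setminus J_1$. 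Combined with the Langlands-form positivity of $\nu(J_1,U_1)$ (supported on $\omega_\alpha^{\vee}$, $\alpha\in\Pi\setminus J_1$, with strictly positive coefficients) and of $\nu(J_2,U_2)$ (lying in the dominant cone generated by $\omega_\alpha^{\vee}$, $\alpha\in\Pi\setminus J_2$), together with the nonnegativity $\langle\omega_\alpha,\omega_\beta^{\vee}\rangle\geq 0$, these partial inequalities upgrade to the full dominance $\nu(J_1,U_1)\leq\nu(J_2,U_2)$, contradicting the hypothesis.

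The main obstacle I anticipate is precisely this final upgrade step: converting pairing inequalities only at $\alpha_0\in\Pi\setminus J_1$ into the full coroot-cone dominance on all of $\Pi$. Handling the ``missing'' pairings at $\alpha_0\in J_1$ requires carefully exploiting the specific Langlands shape of $\nu(J_2,U_2)$ together with the positivity of the inverse Cartan matrix entries; the remaining structure of the argument---Frobenius reciprocity, central character filtration, weight bound from \leref{decompose for lang class}---is otherwise routine.
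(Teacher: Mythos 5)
Your overall architecture coincides with the paper's: Shapiro's Lemma, reduction via \leref{cc conjugate} to a central-character comparison along a composition series of $\mathrm{Res}_{\mathbb{H}_{J_1}}X$, the weight bound $\nu(\gamma^{\vee})\leq\nu(J_2,U_2)$ from \leref{decompose for lang class}, and a dominance contradiction; your displayed pairing inequalities are indeed valid. Where you genuinely diverge is the middle step. The paper never works with an arbitrary weight in the $W_{J_1}$-orbit: it applies the Langlands classification to $\mathrm{Res}_{\mathbb{H}^{ss}_{J_1}}U'$ of the offending composition factor $U'$ to produce a \emph{specific} weight $\gamma^{\vee}$ whose $V_{J_1}^{\vee}$-component has the relative Langlands shape $\sum_{\alpha\in J'}a_{\alpha}\alpha^{\vee}+\sum_{\alpha\in J_1\setminus J'}b_{\alpha}\overline{\omega}_{\alpha}^{\vee}$ with $a_{\alpha}\leq 0$, $b_{\alpha}>0$; since the relative coweights $\overline{\omega}_{\alpha}^{\vee}$ are nonnegative combinations of coroots \cite[Ch VIII Lemma 8.57]{Kn0}, the monotonicity $\mu\leq\lambda\Rightarrow\nu(\mu)\leq\nu(\lambda)$ gives $\nu(J_1,U_1)\leq\nu(\gamma^{\vee})\leq\nu(J_2,U_2)$ outright, with no ``upgrade'' needed. (The paper also treats $L(J_2,U_2)$ first and then runs over the composition factors of $I(J_2,U_2)$, whereas your weight bound handles both modules uniformly, which is slightly more direct.)

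The step you flag as the main obstacle is not actually an obstacle; it closes with exactly the ingredients you name. Write $\delta=\nu(J_2,U_2)-\nu(J_1,U_1)=\sum_{\alpha\in\Pi}e_{\alpha}\alpha^{\vee}$; your inequalities say $e_{\alpha_0}\geq 0$ for $\alpha_0\in\Pi\setminus J_1$. For $\beta\in J_1$ one has $\nu(J_1,U_1)(\beta)=0$ and $\nu(J_2,U_2)(\beta)\geq 0$ (it is a nonnegative combination of fundamental coweights), so $\delta(\beta)\geq 0$; since $\alpha^{\vee}(\beta)\leq 0$ for every simple root $\alpha\neq\beta$ and $e_{\alpha}\geq 0$ for $\alpha\notin J_1$, the element $x:=\sum_{\alpha\in J_1}e_{\alpha}\alpha^{\vee}$ satisfies $x(\beta)\geq 0$ for all $\beta\in J_1$. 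Hence $x=\sum_{\beta\in J_1}x(\beta)\,\overline{\omega}_{\beta}^{\vee}$ is a nonnegative combination of the coroots in $J_1$ (again \cite[Ch VIII Lemma 8.57]{Kn0}), so $e_{\alpha}\geq 0$ for all $\alpha\in\Pi$, i.e.\ $\nu(J_1,U_1)\leq\nu(J_2,U_2)$, contradicting the hypothesis. So your route is correct once this linear-algebra upgrade is inserted; it trades the paper's choice of a Langlands-adapted weight for this final step, and both arguments ultimately rest on the same positivity facts (nonnegativity of $\langle\omega_{\alpha},\omega_{\beta}^{\vee}\rangle$ and of the relative inverse Cartan matrix).
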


\begin{proof}
Suppose $\mathrm{Ext}^i_{\mathbb{H}}(I(J_1, U_1), L(J_2, U_2)) \neq 0$, equivalently $\mathrm{Ext}^i_{\mathbb{H}_{J_1}}(U_1, \mathrm{Res}_{\mathbb{H}_{J_1}}L(J_2, U_2)) \neq 0$ by Shapiro's Lemma. Then by considering the $\mathbb{H}_{J_1}$-central character and using Lemma \ref{lem cc conjugate}, there exists an $\mathbb{H}_{J_1}$-composition factor $U'$ of $\mathrm{Res}_{\mathbb{H}_{J_1}}L(J_2, U_2)$ such that $U'$ has a weight whose real part has the form
\[  \sum_{\alpha \in J_1} a_{\alpha}' \alpha^{\vee} +\nu(J_1, U_1).
\]
Using the Langlands classification on the irreducible $\mathbb{H}^{ss}_{J_1}$-module $\mathrm{Res}_{\mathbb{H}_{J_1}^{ss}} U'$, we can further get a weight, denoted $\gamma^{\vee}$, of $U'$ of the following form:
\[\mathrm{Re}(\gamma^{\vee})=\left(\sum_{\alpha \in J'} a_{\alpha} \alpha^{\vee}+ \sum_{\alpha \in J_1 \setminus J'} b_{\alpha}\overline{\omega}_{\alpha}^{\vee} \right) +\nu(J_1, U_1),
\]
with $J' \subset J_1$, $a_{\alpha} \leq 0$ and $b_{\alpha} >0$, where $\overline{\omega}_{\alpha}^{\vee}$ are fundamental coweights in $V_{J_1}^{\vee}$ corresponding to $\alpha$ (i.e. $\overline{\omega}_{\alpha}^{\vee} \in V_{J_1}$, $\overline{\omega}_{\alpha}^{\vee}(\beta)=\delta_{\alpha, \beta}$ for $\alpha, \beta \in R_{J_1}$). By \cite[Ch VIII Lemma 8.57]{Kn0}, $\overline{\omega}_{\alpha}^{\vee}$ is non-negative sum of simple coroots. Hence 
\[   \sum_{\alpha \in J'} a_{\alpha} \alpha^{\vee}+ \nu(J_1, U_1) \leq \mathrm{Re}(\gamma^{\vee} ).
\]
Then by geometric lemmas of Langlands (\cite{La}, see \cite[Ch VIII Lemmas 8.56 and 8.59]{Kn0}),
\[   \nu(J_1, U_1)= \nu\left (\sum_{\alpha \in J'} a_{\alpha} \alpha^{\vee}+\nu(J_1, U_1)\right) \leq \nu(\gamma^{\vee}).
\]
Now by Lemma \ref{lem decompose for lang class}, we have 
\[ \nu(J_1, U_1) \leq \nu(\gamma^{\vee}) \leq \nu(J_2, U_2)  .
\]
This proves $\mathrm{Ext}^i_{\mathbb{H}}(I(J_1, U_1), L(J_2, U_2)) =0$ if $\nu(J_1, U_1) \not\leq \nu(J_2, U_2)$.

We now consider $\mathrm{Ext}^i_{\mathbb{H}}(I(J_1, U_1), I(J_2, U_2))$  and suppose $\nu(J_1, U_1) \not\leq \nu(J_2, U_2)$. Then for any composition factor $X$ of $I(J_2, U_2)$, $\nu(J_1, U_1) \not\leq \nu(X)$ by Lemma \ref{lem decompose for lang class}. Hence from what we have just proved, we have $\mathrm{Ext}^i_{\mathbb{H}}(I(J_1, U_1), X)=0$. This implies that $\mathrm{Ext}^i_{\mathbb{H}}(I(J_1, U_1), I(J_2,U_2))=0$.
\end{proof}

\begin{remark}
One may compare with a vanishing result of Kato \cite{Ka} (see the paragraph below \cite[Theorem C]{Ka}, also see \cite[Section 8]{Lu0}).
\end{remark}
\subsection{Indecomposability}

We show below that a generalized standard module is indecomposable.

\begin{proposition} \label{prop indecomp standard}
Let $(J,U) \in \Xi_L^g$. Then $I(J,U)$ is also an indecomposable $\mathbb{H}$-module. 
\end{proposition}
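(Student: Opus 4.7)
The plan is to prove indecomposability by a generalized central character argument at the level of $\mathbb{H}_J$, leveraging the canonical copy $1 \otimes U \hookrightarrow I(J,U)$ together with Lemma~\ref{lem decompose for lang class}. Suppose for contradiction that $I(J,U) = M_1 \oplus M_2$ as a direct sum of nonzero $\mathbb{H}$-modules. Restricting to $\mathbb{H}_J$, this gives an $\mathbb{H}_J$-module decomposition which I would compare with the decomposition $\mathrm{Res}_{\mathbb{H}_J} I(J,U) \cong U \oplus Y$ supplied by Lemma~\ref{lem decompose for lang class}.

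First I would check that $U$, being an indecomposable finite-dimensional $\mathbb{H}_J$-module, has a single generalized central character $\chi_U$ for the center $Z(\mathbb{H}_J) \cong S(V)^{W_J}$: otherwise the decomposition of $U$ into generalized $Z(\mathbb{H}_J)$-eigenspaces (which is an $\mathbb{H}_J$-module decomposition, since $Z(\mathbb{H}_J)$ consists of $\mathbb{H}_J$-endomorphisms) would contradict indecomposability. Combined with Lemma~\ref{lem decompose for lang class}(1), which says that no composition factor of $Y$ has central character $\chi_U$, this identifies the generalized $\chi_U$-eigenspace of $\mathrm{Res}_{\mathbb{H}_J} I(J,U)$ with the canonical copy $1 \otimes U$.

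Next, I would observe that the $\mathbb{H}$-decomposition $I(J,U) = M_1 \oplus M_2$ is a fortiori an $\mathbb{H}_J$-decomposition, so extracting generalized $\chi_U$-eigenspaces yields $1 \otimes U = [M_1]_{\chi_U} \oplus [M_2]_{\chi_U}$. By the indecomposability of $U$ one of these summands, say $[M_2]_{\chi_U}$, must vanish, so $1 \otimes U \subseteq M_1$. Since $1 \otimes U$ generates $I(J,U) = \mathbb{H} \otimes_{\mathbb{H}_J} U$ as an $\mathbb{H}$-module, this forces $M_1 = I(J,U)$ and $M_2 = 0$, the desired contradiction.

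I do not anticipate a serious obstacle; the only care needed is in verifying that the generalized $Z(\mathbb{H}_J)$-eigenspace functor truly respects the $\mathbb{H}_J$-direct-sum decomposition coming from an $\mathbb{H}$-decomposition, which is immediate since $Z(\mathbb{H}_J) \subseteq \mathbb{H}_J \subseteq \mathbb{H}$. The conceptual point of the argument is that indecomposability of $U$ propagates to $I(J,U)$ because the $\chi_U$-component of $I(J,U)$, viewed as an $\mathbb{H}_J$-module, coincides with $U$ and generates the whole induced module.
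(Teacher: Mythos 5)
Your proof is correct and takes essentially the same route as the paper: both arguments rest on Lemma~\ref{lem decompose for lang class} to separate the canonical copy $1\otimes U$ from the rest of $\mathrm{Res}_{\mathbb{H}_J} I(J,U)$ by $\mathbb{H}_J$-central characters, then invoke the indecomposability of $U$ and the fact that $1\otimes U$ generates $I(J,U)$ over $\mathbb{H}$. The paper packages this by restricting a splitting map of a split exact sequence to $1\otimes U$, whereas you extract the generalized $\chi_U$-component of each direct summand, but the underlying mechanism is identical.
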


\begin{proof}
Let $(J,U) \in \Xi_L^g$ and let $X=I(J, U)$. Let 
\[  0 \rightarrow Y \stackrel{f}{\rightarrow} X \stackrel{g}{\rightarrow} Z \rightarrow 0 \]
be a short exact sequence. Suppose the short exact sequence splits and we shall show that either $Y$ or $Z$ is zero. Since the short exact sequence splits, there exists an $\mathbb{H}$-map $t: Z \rightarrow X$ such that $g \circ t=\mathrm{Id}$. Since $\mathrm{Res}_{\mathbb{H}_J}$ is an exact functor, we have a short exact sequence
\[ 0 \rightarrow \mathrm{Res}_{\mathbb{H}_J} Y \rightarrow \mathrm{Res}_{\mathbb{H}_J}X \rightarrow \mathrm{Res}_{\mathbb{H}_J}Z \rightarrow 0 .
\]
We also have an exact sequence of the form 
\[   0 \rightarrow f^{-1}(U) \stackrel{f|_{f^{-1}(U)}}{\rightarrow} U \stackrel{g|_U}{\rightarrow} g(U) \rightarrow 0,
\]
where we identify the $\mathbb{H}_J$-subspace $1 \otimes U$ of $I(J,U)$ with $U$. Since $t(g(U))$ has an $\mathbb{H}_J$-central character as $U$, $t(g(U))\subset U$ by Lemma \ref{lem decompose for lang class}. Thus it makes sense to write $g|_{U} \circ t|_{g(U)} =\Id$. Since $U$ is an indecomposable $\mathbb{H}_J$-map, we have either $f^{-1}(U)=0$ or $g(U)=0$. Suppose $g(U)=0$. Then by Frobenius reciprocity, $g=0$ and hence $Z=0$. Suppose $f^{-1}(U)=0$. Then $t|_{g(U)}$ is surjective onto $U$. This implies that $1 \otimes U \subset t(g(U)) \subset t(Z)$. By applying the $\mathbb{H}$-action on both sides of the inclusion, we have $X \subset t(Z)$ and so $t(Z)=X$ (i.e. $t$ is surjective). Hence $t$ is an isomorphism and so is $g$. Thus $Y=0$ as desired. 
\end{proof}

\begin{remark}
Proposition \ref{prop indecomp standard} is not true in general if $U$ is not $\mathbb{H}_J$-tempered. For example, let $R$ be the root system of type $A_1$. Assume $k_{\alpha} \neq 0$ for all $\alpha \in \Pi$. Let $U'$ be the unique $2$-dimensional indecomposable $S(V)$-module with the weight equal to $0$. In this case, $U'$ is not $\mathbb{H}_{\emptyset}$-tempered and one can verify that $I(\emptyset, U')$ is semisimple and of length $2$. (In our terminology, $I(\emptyset, U')$ is $\mathbb{H}$-tempered.)
\end{remark}

The following result and its proof mainly guides for constructions in Section \ref{s quotient gsm}.

\begin{lemma} \label{lem equiv cat 1}
Let $X$ be a finite-dimensional $\mathbb{H}$-module. Suppose there exists a fixed $J \subset \Pi$ and a fixed $\nu^{\vee} \in V_J^{\vee, \bot} \cap V_0^{\vee}$ such that $X$ admits a filtration $0 = F_0 \subset \ldots \subset F_{r-1} \subset F_r =X$ with the subquotients $F_{i+1}/F_i \cong I(J,U_i)$ for some $(J, U_i) \in \Xi_L^g$ satisfying $\nu(J, U_i)=\nu$. Then there exists an $\mathbb{H}_J$-tempered module $U$ such that $X$ is isomorphic to $I(J,U)$. Moreover, the $\mathbb{H}_J$-tempered module $U$ is unique, up to isomorphism.
\end{lemma}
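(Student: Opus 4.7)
The plan is to recover $U$ intrinsically as the ``$\nu^\vee$-top part'' of $\Res_{\mathbb{H}_J}X$ and then use Frobenius reciprocity. First I would set $U$ to be the sum of generalized weight subspaces $M(X,\gamma^\vee)$ of $\Res_{\mathbb{H}_J}X$ over weights $\gamma^\vee$ whose projection onto $V_J^{\vee,\bot}$ in the decomposition $V^\vee = V_J^\vee \oplus V_J^{\vee,\bot}$ equals $\nu^\vee$. Since $s_\alpha$ with $\alpha \in J$ shifts any element of $V^\vee$ only by an element of $V_J^\vee$, this projection is $W_J$-invariant and $U$ is an $\mathbb{H}_J$-submodule.

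To produce a complement $\Res_{\mathbb{H}_J}X = U \oplus Y$, I would apply Lemma \ref{lem decompose for lang class} to each subquotient $F_{i+1}/F_i \cong I(J,U_i)$: one obtains $\Res_{\mathbb{H}_J}(F_{i+1}/F_i) \cong U_i \oplus Y_i$, and by Lemma \ref{lem no WJ conj} (applied using the strict inequality $\nu(\gamma^\vee) < \nu^\vee$ for weights of $Y_i$) no weight of any $Y_i$ is $W_J$-conjugate to any weight of any $U_j$. Since each $F_i$ is stable under $Z(\mathbb{H}_J)$, the resulting central-character separation propagates to a global splitting $\Res_{\mathbb{H}_J}X = U \oplus Y$, with induced filtration $F_i^{\mathrm{top}} := F_i \cap U$ on $U$ satisfying $F_{i+1}^{\mathrm{top}}/F_i^{\mathrm{top}} \cong U_i$; in particular $U$ is $\mathbb{H}_J$-tempered.

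Frobenius reciprocity then turns the inclusion $U \hookrightarrow \Res_{\mathbb{H}_J}X$ into an $\mathbb{H}$-morphism $\phi : I(J,U) \to X$. The image of $\mathbb{H} \otimes_{\mathbb{H}_J}F_i^{\mathrm{top}}$ lies in $F_i$, and the map induced on each subquotient $I(J,U_i) \to F_{i+1}/F_i \cong I(J,U_i)$ is the Frobenius correspondent of the natural inclusion $U_i \hookrightarrow U_i \oplus Y_i$, namely the identity on $I(J,U_i)$. A five-lemma induction along the filtration then forces $\phi$ to be an isomorphism, giving existence.

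For uniqueness, any other $\mathbb{H}_J$-tempered $\widetilde U$ with $I(J,\widetilde U) \cong X$ decomposes into indecomposable tempered summands $\widetilde U = \bigoplus_k \widetilde U_k$; by Proposition \ref{prop indecomp standard} and Krull--Schmidt, this decomposition matches (after permutation) the corresponding one arising from $U$. Since $\nu(J,\cdot)$ is dominant in $V^\vee$ and the dominant representative in a $W$-orbit is unique, each $\widetilde U_k$ must also have $\nu(J,\widetilde U_k) = \nu^\vee$, so Lemma \ref{lem decompose for lang class} lets one extract the top part on both sides of $I(J,\widetilde U_k) \cong I(J,U_{\sigma(k)})$ to conclude $\widetilde U_k \cong U_{\sigma(k)}$ and hence $\widetilde U \cong U$. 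The main obstacle is the second paragraph above: propagating the top/bottom central-character separation from a single subquotient to the whole $X$ via Lemma \ref{lem no WJ conj}.
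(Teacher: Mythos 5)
Your proof is correct, but the existence half takes a genuinely different route from the paper. The paper proves existence homologically: it reduces to $r=2$, induces a projective resolution of $U_1$ to one of $I(J,U_1)$, realizes $X$ as a pushout (Yoneda extension), and then uses the vanishing $\mathrm{Ext}^1_{\mathbb{H}_J}(U_1,N)=0$ for the ``lower'' piece $N$ of $\mathrm{Res}_{\mathbb{H}_J}I(J,U_2)$ to replace the classifying map $f=(f_1,f_2)$ by $(f_1,0)$, so that the pushout visibly descends to an $\mathbb{H}_J$-module $M_U$ with $X\cong I(J,M_U)$. You instead recover $U$ intrinsically inside $\mathrm{Res}_{\mathbb{H}_J}X$ as the sum of the ``top'' generalized central-character blocks: Lemma \ref{lem no WJ conj} together with Lemma \ref{lem decompose for lang class} shows that on each subquotient the weights of $Y_i$ lie in $W_J$-orbits disjoint from those of any $U_j$, the block decomposition for $Z(\mathbb{H}_J)$ is compatible with the $\mathbb{H}$-stable filtration, and then Frobenius reciprocity plus the five lemma along the filtration shows the resulting map $I(J,U)\to X$ is an isomorphism. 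This is arguably more direct and avoids projective resolutions altogether; what the paper's homological argument buys is that it works entirely at the level of extension classes (the same $\mathrm{Ext}$-vanishing from Proposition \ref{prop ext lower langlands para} that is reused elsewhere), which fits the later constructions of $S(V)$-type extensions, whereas your argument exploits the central-character splitting explicitly. Your uniqueness argument is essentially the paper's (reduce to indecomposables via Proposition \ref{prop indecomp standard}, then match the top parts via Lemma \ref{lem decompose for lang class}).

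Two small points to tighten. First, since Definition \ref{def temp ds} constrains only real parts of weights, your selection criterion should read ``the real part of the $V_J^{\vee,\bot}$-component equals $\nu^{\vee}$'' (equivalently, take the blocks whose central characters meet the weights of the $U_i$); the proof of Lemma \ref{lem no WJ conj} shows precisely that weights of the $Y_i$ fail this real-part condition, which is what your splitting needs. Second, in the uniqueness step the phrase about the unique dominant representative of a $W$-orbit is not quite the right justification that $\nu(J,\widetilde U_k)=\nu^{\vee}$; the clean argument is that an isomorphism $I(J,\widetilde U_k)\cong I(J,U_{\sigma(k)})$ forces equal weight sets, and by Lemma \ref{lem decompose for lang class} the value $\nu(J,\cdot)$ is read off from the weights as the common real $V_J^{\vee,\bot}$-component of the top part, after which the top blocks on both sides correspond and give $\widetilde U_k\cong U_{\sigma(k)}$.
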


\begin{proof}
By using an inductive argument, it suffices to prove the case when $X$ admits such filtration of length 2 (i.e. $r=2$).
For $i=1,2$, let $P_{U_i}^{\bullet}$ be a projective resolution of $U_i$. We write as
\[  P_{U_i}^2 \stackrel{d_2}{\rightarrow} P_{U_i}^1 \stackrel{d_1}{\rightarrow} P_{U_i}^0 \rightarrow U_i \rightarrow 0 .
\]
Then $I(J, P_{U_i}^k)$ is still a projective  resolution for $I(J,U_i)$ with the differential maps denoted by $d_k'$.
The map is determined by
\begin{align}\label{diag induction} \xymatrix{ 
\cdots    \ar[r] & I(J, P^1_{U_1}) \ar[d]^{f} \ar[r]^{d_2'} & I(J, P^0_{U_1}) \ar[d]\ar[r] & I(J, U_1) \ar[r] \ar[d]^{id}  &0  \\
0   \ar[r] & I(J, U_2) \ar[r] & X  \ar[r] &  I(J,U_1)   \ar[r]  & 0, }
\end{align}
where $M$ is the pushout of $d_2': I(J,P_{U_1}^1) \rightarrow I(J, P_{U_1}^0)$ and $f': I(J,P_{U_1}^1) \rightarrow I(J,U_2)$. Explicitly, 
\[  X \cong (I(J,U_2) \oplus I(J,P^0_{U_1} ))/\left\{ (f(p), -d'_2(p)) : p \in I(J, P^1_{U_1}) \right\}.
\]

Recall that $I(J, P_{U_i}^k)=\mathbb{H} \otimes_{\mathbb{H}_J} P^k_{U_i}$.  Now we consider the identification
\[\mathrm{Hom}_{\mathbb{H}}(I(J, P_{U_1}^1), I(J, P_{U_2})) \cong \mathrm{Hom}_{\mathbb{H}_J}(P_{U_1}^1, \mathrm{Res}_{\mathbb{H}_J}I(J, P_{U_2})) \cong  \mathrm{Hom}_{\mathbb{H}_J}(P_{U_1}^1, U_2) \oplus \mathrm{Hom}_{\mathbb{H}_J}(P_{U_1}^1, N),
\]
where $N$ is an $\mathbb{H}_J$-module whose composition factors have weights  $\nu_1^{\vee}$ satisfying $\nu_1^{\vee} < \nu(J, U_2)=\nu^{\vee}$ as in Lemma \ref{lem decompose for lang class}. Under such identification, we write $f$ as $(f_1, f_2)$. Now since $\mathrm{Ext}^1_{\mathbb{H}_J}(U_1, N)=0$ by using Proposition \ref{prop ext lower langlands para}, $(f_1, 0)$ and $(f_1, f_2)$ determine the same cohomology class of $\mathrm{Ext}^1_{\mathbb{H}}(I(J, U_1), I(J,U_2)) \cong \mathrm{Ext}^1_{\mathbb{H}_J}(U_1, U_2) \oplus \mathrm{Ext}^1_{\mathbb{H}_J}(U_1 N)$. Let $f' =(f_1,0)$. By the identification, we now have $f'(1 \otimes P^1_{U_1}) \subset 1 \otimes U_2$. By definition, we have $d_2'(1 \otimes P^1_{U_1}) \subset 1\otimes P_{U_1}^0$. Hence, we now have maps $\widetilde{f}': P^1_{U_1} \rightarrow U_2$ and $\widetilde{d}_2': P^1_{U_1} \rightarrow P_{U_1}^0$ naturally arisen from $f'$ and $d_2'$ respectively. Let
\[M =(I(J,U_2) \oplus I(J,P^0_{U_1} ))/\left\{ (f'(p), d'_2(p)) : p \in I(J, P^1_{U_1}) \right\}.
\]
By the construction of the Yoneda extension, $X \cong M$. We now also define 
\[  M_U = (U_2 \oplus P^0_{U_1})/\left\{ (\widetilde{f}'(1 \otimes p), \widetilde{d}_2'(1 \otimes p)) : p \in P^1_{U_1} \right\} .
\]
It is straightforward to verify $ M \cong I(J, M_U)$. Hence $X \cong I(J, M_U)$ as desired.

It remains to prove the uniqueness. Suppose there exists $\mathbb{H}_J$-tempered modules $U_1$ and $U_2$ such that $X \cong I(J,U_1) \cong I(J,U_2)$. Let $f: I(J,U_1) \rightarrow I(J,U_2)$ be an isomorphism. By using Proposition \ref{prop indecomp standard}, we may first reduce $U_1$ and $U_2$ to be indecomposable and then show that $f(1 \otimes U_1) = 1 \otimes U_2$ by Lemma \ref{lem decompose for lang class}. Then $f$ defines an $\mathbb{H}_J$-isomorphism between $U_1$ and $U_2$. 
\end{proof}



\subsection{$S(V)$-type extensions} \label{ss sv type ext}

We shall refine our study to a certain class of extensions  which arises from extensions of representations for polynomial rings. Such extensions shall be called $S(V)$-type extensions and have close connections to Jantzen filtrations discussed later. Constructions and more study on $S(V)$-type extensions will be carried out in Section \ref{s quotient gsm} after the necessary tools have been explained.

\begin{lemma} \label{lem nat isom ext}
Let $(J, U_1), (J, U_2) \in \Xi_L$ with $\nu(J, U_1)=\nu(J, U_2)$. Via the natural identification of $\mathbb{H}_J \cong \mathbb{H}_J^{ss} \otimes S(V_J^{\bot})$, we have
\[   U_1 \cong \overline{U}_1 \otimes L_1 ,\   U_2 \cong \overline{U}_2 \otimes L_2  \]
such that $\overline{U}_i$ is an $\mathbb{H}_J^{ss}$-module and $L_i$ is a 1-dimensional $S(V_J^{\bot})$-representation. Then
\begin{align} \label{eqn ext standard same}
  \mathrm{Ext}^1_{\mathbb{H}}(I(J,U_1), I(J,U_2)) &\cong \mathrm{Hom}_{\mathbb{H}_J^{ss}}(\overline{U}_1, \overline{U}_2) \otimes \mathrm{Ext}^1_{S(V_J^{\bot})}(L_1, L_2) \\
	                                                & \quad \quad \oplus \mathrm{Hom}_{S(V_J^{\bot})}(L_1, L_2) \otimes \mathrm{Ext}^1_{\mathbb{H}_J^{ss}}(\overline{U}_1, \overline{U}_2)  .
\end{align}

\end{lemma}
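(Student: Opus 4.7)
The plan is to reduce to an Ext computation inside $\mathbb{H}_J$ by Frobenius reciprocity, peel off a summand using the Langlands-type decomposition of the restriction, and then invoke the K\"unneth formula for the tensor product factorization $\mathbb{H}_J \cong \mathbb{H}_J^{ss}\otimes S(V_J^{\bot})$.

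First I would apply Shapiro's lemma (i.e.\ the fact that $I(J,-)=\mathbb{H}\otimes_{\mathbb{H}_J}-$ is left adjoint to the exact restriction functor, hence sends projectives to projectives) to obtain
\[
\mathrm{Ext}^1_{\mathbb{H}}(I(J,U_1),I(J,U_2)) \cong \mathrm{Ext}^1_{\mathbb{H}_J}(U_1,\mathrm{Res}_{\mathbb{H}_J} I(J,U_2)).
\]
Then by \leref{decompose for lang class} applied to $(J,U_2)\in \Xi_L$, we can write
\[
\mathrm{Res}_{\mathbb{H}_J} I(J,U_2) \cong U_2 \oplus Y,
\]
where every weight $\gamma^{\vee}$ of $Y$ satisfies $\nu(\gamma^{\vee})<\nu(J,U_2)=\nu(J,U_1)$.

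The next step is to show $\mathrm{Ext}^1_{\mathbb{H}_J}(U_1,Y)=0$. For this, let $Y'$ be any simple composition factor of $Y$ and let $\gamma^{\vee}$ be a weight of $Y'$. Since $\nu(\gamma^{\vee})<\nu(J,U_1)$, \leref{no WJ conj} (applied to $(J,U_1)$) says $\gamma^{\vee}$ is not $W_J$-conjugate to any weight of $U_1$; consequently the $\mathbb{H}_J$-central characters of $U_1$ and $Y'$, which are $W_J$-orbits in $V^{\vee}$ through these weights, are distinct. By \leref{cc conjugate}, $\mathrm{Ext}^i_{\mathbb{H}_J}(U_1,Y')=0$ for every $i$ and every composition factor $Y'$ of $Y$; a standard long-exact-sequence induction on the length of $Y$ then yields $\mathrm{Ext}^1_{\mathbb{H}_J}(U_1,Y)=0$. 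Combined with the decomposition above, this gives
\[
\mathrm{Ext}^1_{\mathbb{H}}(I(J,U_1),I(J,U_2)) \cong \mathrm{Ext}^1_{\mathbb{H}_J}(U_1,U_2).
\]

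Finally, using the identification $\mathbb{H}_J\cong \mathbb{H}_J^{ss}\otimes S(V_J^{\bot})$ and the factorizations $U_i\cong \overline{U}_i\otimes L_i$, I would apply the K\"unneth formula \leref{kuneth form} with $i=1$. Since the only decompositions of $1$ as a sum of two nonnegative integers are $1=0+1$ and $1=1+0$, the formula reads
\[
\mathrm{Ext}^1_{\mathbb{H}_J}(\overline{U}_1\otimes L_1,\overline{U}_2\otimes L_2) \cong \mathrm{Hom}_{\mathbb{H}_J^{ss}}(\overline{U}_1,\overline{U}_2)\otimes \mathrm{Ext}^1_{S(V_J^{\bot})}(L_1,L_2) \,\oplus\, \mathrm{Ext}^1_{\mathbb{H}_J^{ss}}(\overline{U}_1,\overline{U}_2)\otimes \mathrm{Hom}_{S(V_J^{\bot})}(L_1,L_2),
\]
which is precisely the claimed formula.

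The only genuinely nontrivial step is the vanishing $\mathrm{Ext}^1_{\mathbb{H}_J}(U_1,Y)=0$; everything else is either adjunction, the prior structural lemma on restriction of standard modules, or the K\"unneth identity for the tensor product of algebras. I expect the central-character comparison will be the main point to justify carefully, but with \leref{no WJ conj} in hand it is essentially immediate.
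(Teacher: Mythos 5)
Your proposal is correct and follows essentially the same route as the paper: Shapiro's lemma, the decomposition $\mathrm{Res}_{\mathbb{H}_J} I(J,U_2)\cong U_2\oplus Y$ from \leref{decompose for lang class}, vanishing of $\mathrm{Ext}^1_{\mathbb{H}_J}(U_1,Y)$ by the central character comparison (\leref{no WJ conj} together with \leref{cc conjugate}), and then the K\"unneth formula \leref{kuneth form}. Your write-up merely spells out in more detail the central-character step (including why it applies to $U_1$ rather than $U_2$, using $\nu(J,U_1)=\nu(J,U_2)$), which the paper leaves implicit.
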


\begin{proof}
By Shapiro's Lemma, Lemma \ref{lem no WJ conj} and Lemma \ref{lem decompose for lang class}, 
\[ \mathrm{Ext}^i_{\mathbb{H}}(I(J, U_1), I(J, U_2)) \cong \mathrm{Ext}^i_{\mathbb{H}_J}(U_1, U_2) \oplus \mathrm{Ext}^i_{\mathbb{H}_J}(U_1, Y) .\]
By Lemma \ref{lem decompose for lang class}, the $\mathbb{H}_J$-central characters of $U_1$ and $Y$ are different and hence $\mathrm{Ext}^i_{\mathbb{H}}(U_1, Y)=0$. The $\mathrm{Ext}^i_{\mathbb{H}_J}(U_1, U_2)$ is naturally isomorphic to the term in the left hand side of (\ref{eqn ext standard same}) by Lemma \ref{lem kuneth form}.
\end{proof}


\begin{definition} \label{def sv extension}
We use the notation in Lemma \ref{lem nat isom ext}. By composing the isomorphism in Lemma \ref{lem nat isom ext} and a projection map to one of factors, we obtain a map 
\[ \mathrm{pr}_{\mathbb{H}^{ss}_J}:  \mathrm{Ext}^1_{\mathbb{H}}(I(J, U_1), I(J, U_1)) \rightarrow  \mathrm{Hom}_{S(V_J^{\bot})}(L_1, L_1) \otimes \mathrm{Ext}^1_{\mathbb{H}^{ss}}(\overline{U}_1, \overline{U}_1) \cong \mathrm{Ext}^1_{\mathbb{H}^{ss}}(\overline{U}_1, \overline{U}_1)  , \]
\[ \mathrm{pr}_{S(V_J^{\bot})}:  \mathrm{Ext}^1_{\mathbb{H}}(I(J, U_1), I(J, U_1)) \rightarrow \mathrm{Hom}_{\mathbb{H}_J^{ss}}(\overline{U}_1, \overline{U}_1) \otimes \mathrm{Ext}^1_{S(V_J^{\bot})}(L_1, L_1) \cong \mathrm{Ext}^1_{S(V_J^{\bot})}(L_1, L_1) . \]

Let $\zeta \in \mathrm{Ext}^1_{\mathbb{H}}(I(J, U_1), I(J, U_2))$. We say the element $\zeta$ is a {\it $S(V)$-extension} if $\mathrm{pr}_{\mathbb{H}^{ss}_J}(\zeta)=0$ and $\mathrm{pr}_{S(V_J^{\bot})} \neq 0$. 


Let $(J,U) \in \Xi_L$. Let $\eta^{\vee} \in \mathrm{Ext}^1_{\mathbb{H}}(I(J,U), I(J,U))$ be a $S(V)$-extension. We call $X$ is {\it $(J,U,\eta^{\vee})$-$S(V)$-type} (or simply strict $S(V)$-type) if there exists a filtration on $X$ of the form: 
\[0 \subset X_1 \subset X_2 \subset \ldots \subset X_l =X\]
 such that for all $i$, $X_i/X_{i-1} \cong I(J,U)$ and the short exact sequence
\begin{align} \label{eqn short exact sv type}
 0 \rightarrow X_{i}/X_{i-1} \rightarrow X_{i+1}/X_{i-1} \rightarrow X_{i+1}/X_i \rightarrow 0 
\end{align}
corresponds to $\eta^{\vee}$ under the Yoneda correspondence.

It is indeed not necessary to fix one $\eta^{\vee}$ for all short exact sequences in the above definition and allow a larger class of modules. Our approach later can also deal with some of those modules, but it requires more set-up. For our purpose of studying first extension, those strict $S(V)$-types will suffice.


\end{definition}

\section{Intertwining operators } \label{s intertwin operat}

Intertwining operators for parabolically induced modules are the major tools for our computation in this paper. Some treatments in this section and the Appendix B are similar to \cite{KR} and \cite{Re0}. The intertwining operator defines the Jantzen filtration of a generalized standard modules which will be discussed in Section \ref{s jan filt}.

The main result in this section is Proposition \ref{prop intertwin properties}, which gives a description of an intertwining operator for a generalized standard module. The image of the intertwining operator also defines a quotient $L(J,U)$ for $(J,U) \in \Xi_L^g$ in Definition \ref{def quotient gen}.

\subsection{Intertwining elements}

In this section, we fix $J \subseteq \Pi$ and fix a finite-dimensional $\mathbb{H}_J$-module $U$. We shall define some intertwining element, which involves inverting some elements in $S(V)$. To deal with such matter in a proper way, we define some notations below.

\begin{definition}

Suppose $J \neq \Pi$. Let $A$ be the multiplicative closed set in $S(V)$ which contains $1$ and all the elements of the form $(v_1- c_1)\ldots (v_k-c_k)$, where $v_i \in V \setminus V_J$ and $c_k \in \mathbb{C}$. Let $\mathcal O(J)=A^{-1}S(V)$ be the localization of the ring $S(V)$ by $A$. If $J=\Pi$, simply set $\mathcal O(J)=S(V)$. 

\end{definition}

\begin{lemma} \label{lem rat sturcture}
$\mathbb{H}_J \otimes_{S(V)} \mathcal O(J)$ has a natural algebra structure such that $\mathbb{H}_J$ embeds naturally into $\mathbb{H}_J \otimes_{S(V)} \mathcal O(J)$ as a subalgebra.

\end{lemma}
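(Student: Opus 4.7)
The plan is to identify $\mathbb{H}_J \otimes_{S(V)} \mathcal O(J)$ with an enlargement $\widetilde{\mathbb{H}}_J$ of $\mathbb{H}_J$ defined by the same relations as in Definition \ref{def graded affine} but with $S(V)$ replaced by the localization $\mathcal O(J)$; equivalently, to localize $\mathbb{H}_J$ at the multiplicative set $A$. The essential point is that although $A$ sits only inside the commutative subring $S(V)$, the commutation rule $t_{s_\alpha} v - s_\alpha(v) t_{s_\alpha} = k_\alpha \langle v, \alpha^\vee \rangle$ for $\alpha \in J$ is sufficiently tame that $A$ behaves compatibly with the $t_w$.

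First I will verify that $A$ is stable under the action of $W_J$ on $S(V)$. Using the decomposition $V = V_J \oplus V_J^\bot$, for $\alpha \in J$ one has $s_\alpha(v^\bot)=v^\bot$ for every $v^\bot \in V_J^\bot$, since $\alpha^\vee \in V_J^\vee$ annihilates $V_J^\bot$; hence if $v \in V \setminus V_J$, writing $v = v_J + v^\bot$ with $v^\bot \neq 0$, one gets $s_\alpha(v) = (v_J - \langle v, \alpha^\vee \rangle \alpha) + v^\bot \notin V_J$. Thus $W_J$ permutes the generators $(v-c)$ of $A$, so $w(A) = A$ for every $w \in W_J$. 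Consequently the $W_J$-action on $S(V)$ extends canonically to $\mathcal O(J)$ by $w(f/a) = w(f)/w(a)$, and for $\alpha \in J$ the divided difference $\partial_\alpha p = (p - s_\alpha(p))/\alpha$ extends to a well-defined linear map $\partial_\alpha : \mathcal O(J) \to \mathcal O(J)$; indeed for $p = f/a$ one has $p - s_\alpha(p) = (f\, s_\alpha(a) - s_\alpha(f)\, a)/(a\, s_\alpha(a))$, and the numerator is divisible by $\alpha$ in $S(V)$ because $s_\alpha$ acts trivially modulo $\alpha$.

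I will then define $\widetilde{\mathbb{H}}_J$ to be the $\mathbb{C}$-algebra generated by the symbols $\{t_w : w \in W_J\}$ together with $\mathcal O(J)$, subject to the group law $t_w t_{w'} = t_{ww'}$, the ring structure of $\mathcal O(J)$, and the deformed commutation relation $t_{s_\alpha} p = s_\alpha(p) t_{s_\alpha} + k_\alpha \partial_\alpha(p)$ for every $\alpha \in J$ and $p \in \mathcal O(J)$. A PBW argument parallel to the one for $\mathbb{H}_J$ itself will show that $\widetilde{\mathbb{H}}_J$ is a free left $\mathcal O(J)$-module with basis $\{t_w\}_{w \in W_J}$; this can be carried out either by exhibiting a faithful representation of $\widetilde{\mathbb{H}}_J$ on an $\mathcal O(J)$-module built from multiplication operators, reflections and divided differences, or by identifying $\widetilde{\mathbb{H}}_J$ with the Ore localization of $\mathbb{H}_J$ at $A$ once the Ore conditions are established from the commutation formula by induction on $\ell(w)$. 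The resulting $\mathcal O(J)$-module structure on $\widetilde{\mathbb{H}}_J$ matches that of $\mathbb{H}_J \otimes_{S(V)} \mathcal O(J)$ termwise in the PBW basis, and this identification transports the algebra structure. Injectivity of the natural map $\mathbb{H}_J \to \widetilde{\mathbb{H}}_J$ is immediate because $\mathbb{H}_J$ is free as an $S(V)$-module and $S(V) \hookrightarrow \mathcal O(J)$ is an inclusion of commutative rings (the elements of $A$ being nonzerodivisors).

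The main obstacle will be the PBW/freeness step — equivalently, verifying the Ore conditions for $A$ in $\mathbb{H}_J$ — which requires that pushing an $a \in A$ across a product $t_w$ with $w \in W_J$ always produces an expression lying in $\bigoplus_{w'} t_{w'} \mathcal O(J)$ with no spurious cancellations. This follows by induction on $\ell(w)$ from the base case $w = s_\alpha$ handled above, using the $W_J$-stability of $A$ and the $\partial_\alpha$-stability of $\mathcal O(J)$; the computation is mechanical but requires careful bookkeeping of denominators to confirm associativity of the extended relations.
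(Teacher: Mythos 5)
Your proposal is correct and rests on exactly the same key point as the paper's proof: for $\alpha\in J$ and $v\in V\setminus V_J$ one has $s_{\alpha}(v)\notin V_J$, so the multiplicative set $A$ is $W_J$-stable and the cross relation $t_{s_{\alpha}}p=s_{\alpha}(p)t_{s_{\alpha}}+k_{\alpha}\partial_{\alpha}(p)$ extends to $\mathcal O(J)$, which is precisely how the paper writes the commutation of $t_{s_{\alpha}}$ with $\frac{1}{v-c}$ and declares the structure well-defined. The additional PBW/Ore-localization scaffolding you sketch is a more careful version of what the paper dismisses as straightforward, and both of the routes you indicate (a faithful representation by multiplication, reflection and divided-difference operators, or Ore localization at the $W_J$-stable set $A$) are standard and do go through.
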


\begin{proof}
For $v \in V \setminus V_J$, the relation between $t_{s_{\alpha}} \in \mathbb{H}_J$ ($\alpha \in J$) and $\frac{1}{v} \in \mathcal O(J)$ is given by 
\[(t_{s_{\alpha}} \otimes 1) (1 \otimes \frac{1}{v-c}) - (1 \otimes \frac{1}{s_{\alpha}(v)-c})(t_{s_{\alpha}} \otimes 1)=\frac{k_{\alpha}\alpha^{\vee}(v)}{(v-c)(s_{\alpha}(v)-c)} .\]
It is straightforward to check that $s_{\alpha}(v) \notin V_J$ and so the relation is well-defined. The map $ h \mapsto h\otimes 1$ from $\mathbb{H}_J$ to $\mathbb{H}_J \otimes_{S(V)} \mathcal O(J)$ defines the natural embedding.
\end{proof}

Define $\mathcal H_J=\mathbb{H}_J \otimes_{S(V)} \mathcal O(J)$ (which is an algebra by Lemma \ref{lem rat sturcture}). Define $\mathcal H^J=  \mathbb{H} \otimes_{\mathbb{H}_J} (\mathbb{H}_J \otimes_{S(V)} \mathcal O(J))=\mathbb{H} \otimes_{\mathbb{H}_J} \mathcal H_J$ (which does not have a natural algebraic structure) and we shall regard $\mathcal{H}_J$ as an $(\mathbb{H}, \mathcal{H}_J)$-bimodule (by the left and right multiplications respectively). For $w \in W$, we shall simply write $t_w$ for $t_w \otimes (1 \otimes 1)$ as an element in $\mathcal H_J$. For $q \in \mathcal O(J)$, we shall simply write $q$ for $(1 \otimes 1 \otimes q)$ as an element in $\mathcal H^J$. We also have other similar notations such as $t_wq$ for $t_w \otimes 1 \otimes q$.




\begin{definition} \label{def curl HJ mod}
Let $p=\dim V_J$ and let $\left\{ \omega_1^{\vee},\ldots, \omega_{n-p}^{\vee} \right\}$ be a basis for $V_J^{\vee,\bot}$. Denote $\mathbb{C}(\mathbf a_1, \ldots, \mathbf a_{n-p})$ be the algebra of rational functions with indeterminantes $\mathbf a_1, \ldots, \mathbf a_{n-p}$ over $\mathbb{C}$. 

Let $U$ be an $\mathbb{H}_J$-module. Let $\eta_{\mathbf{a}}^{\vee}=\mathbf a_1\omega_1^{\vee}+\ldots +\mathbf a_{n-p}\omega_{n-p}^{\vee}$, which will be regarded as a (natural) function from $V$ to $\mathbb{C}(\mathbf a_1, \ldots, \mathbf a_{n-p})$. Define $U_{\mathbf{a}}$ to be an $\mathcal{H}_J$-module such that 
$U_{\mathbf{a}}$ is isomorphic to $\mathbb{C}(\mathbf a_1,\ldots, \mathbf a_{n-p}) \otimes_{\mathbb{C}} U$ as vector spaces and the action of $\mathcal H_J$ is determined by
\[    \pi_{U_{\mathbf{a}}}(t_w)(b\otimes u)= b \otimes \pi_{U}(t_w) u \quad \mbox{ for $w \in W$ }
\]
\[   \pi_{U_{\mathbf{a}}}(v)(b\otimes u)= b \otimes \pi_{U}(v) u + \eta_{\mathbf{a}}^{\vee}(v) b \otimes u \quad \mbox{ for $v\in V$ }
\]
For $v \in V \setminus V_J$, $\pi_{U_{\mathbf{a}}}(v)$ is invertible for generic values of $(\mathbf a_1, \ldots, \mathbf a_{n-p})$. Hence the $\mathcal H_J$-action on $U_{\mathbf{a}}$ is well-defined. For an element $b \otimes u \in U_{\mathbf a}$, we shall simply write $bu$ for $b \otimes u$. There is a natural multiplication of $\mathbb{C}(\mathbf a_1, \ldots, \mathbf a_{n-p})$ on $U_{\mathbf a}$ and we shall consider $U_{\mathbf a}$ to be an $\mathcal H_J$-module over $\mathbb{C}(\mathbf a_1,\ldots, \mathbf a_{n-p})$. 

Fix a $\mathbb{C}$-basis $\left\{ u_1, \ldots, u_k \right\}$ for $U$.  We consider the tensor product $\mathcal H^J \otimes_{\mathcal H_J} U_{\mathbf{a}}$, which will be regarded as an $\mathbb{H}$-module via the left multiplication of $\mathbb{H}$ on $\mathcal H^J$. For any element $x_{\mathbf a} \in \mathcal H^J \otimes_{\mathcal H_J} U_{\mathbf{a}}$, $x_{\mathbf a}$ can be written into the form
\begin{align} \label{eqn standard form}
x_{\mathbf a}= \sum_{w \in W^J} t_w \otimes \left( \sum_{i=1}^k b_{w,i} u_i \right),
\end{align}
where $b_{w,i} \in \mathbb{C}(\mathbf{a}_1,\ldots, \mathbf{a}_{n-p})$. We say that $x_{\mathbf a}$ is {\it holomorphic at $0$} if each $b_{w,i}$ is holomorphic at $(0,\ldots, 0)$. It is easy to see that the definition of holomorphicity is independent of the choice of a basis for $U$. Then for an holomorphic element $x_{\mathbf a} \in \mathcal H^J \otimes_{\mathcal H_J} U_{\mathbf{a}}$ with the form (\ref{eqn standard form}), define the specialization $|_{\mathbf a=0}$ as follows:
\begin{align}\label{eqn evaluation map}
x_{\mathbf a}|_{\mathbf a=0} = \sum_{w \in W^J} t_w \otimes \left( \sum_{i=1}^k b_{w,i} (0,\ldots, 0) u_i \right) \in \mathbb{H} \otimes_{\mathbb{H}_J} U .
\end{align}
\end{definition}

Let $w \in W^J$. Let $w=s_{\alpha_{r}}\ldots s_{\alpha_1}$ be a reduced expression of $w$. Let $R(w)=\left\{ \alpha \in R^+: w(\alpha)<0 \right\}$.
Define the intertwining element:
\begin{align} \label{eqn tilde tau} \widetilde{\tau}_w= (t_{s_{\alpha_r}}\alpha_r-k_{\alpha_r})\ldots (t_{s_{\alpha_1}}\alpha_1-k_{\alpha_1}) ,                   
\end{align}
\[ \tau_{w} =(t_{s_{\alpha_r}}\alpha_r-k_{\alpha_r})\ldots (t_{s_{\alpha_1}}\alpha_1-k_{\alpha_1}) \left(\prod_{\alpha \in R(w)} \alpha^{-1} \right) \in \mathcal H_J .\]
The way of normalization for $\tau_w$ will become clear in Proposition \ref{prop intertwin properties}. The well-definedness of $\tau_w$ follows from the following result:

\begin{proposition}\cite[Proposition 2.5(e)]{KR} \label{prop indep cho base}
For $w \in W^J$, $\tau_w$ is independent of the choice of a reduced expression.
\end{proposition}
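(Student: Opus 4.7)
The plan is to prove the proposition by reducing to braid-relation checks on the simple intertwiners and then handling the rational normalization factor separately. First I would introduce $\tilde\tau_{s_\alpha} := t_{s_\alpha}\alpha - k_\alpha \in \mathbb{H}$ for each $\alpha \in \Pi$, and verify, by a one-line calculation using the defining relation $t_{s_\alpha}v - s_\alpha(v)t_{s_\alpha} = k_\alpha \langle v,\alpha^\vee\rangle$, the key commutation identity
\[
 \tilde\tau_{s_\alpha}\, v \;=\; s_\alpha(v)\, \tilde\tau_{s_\alpha} \qquad (v \in V).
\]
In other words, $\tilde\tau_{s_\alpha}$ intertwines $v$ and $s_\alpha(v)$ on the polynomial side, and the same then holds for the normalized element $\tau_{s_\alpha} = \tilde\tau_{s_\alpha}\alpha^{-1} \in \mathcal H_J$ since $\alpha$ and $s_\alpha(\alpha)=-\alpha$ only differ by a unit. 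This identity is the mechanism that drives everything.

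Next, by Matsumoto's theorem, any two reduced expressions of $w \in W^J$ are related by a finite sequence of braid moves $\underbrace{s_\alpha s_\beta s_\alpha\cdots}_{m_{\alpha\beta}} = \underbrace{s_\beta s_\alpha s_\beta\cdots}_{m_{\alpha\beta}}$. So it suffices to establish the braid relations
\[
 \underbrace{\tilde\tau_{s_\alpha}\tilde\tau_{s_\beta}\tilde\tau_{s_\alpha}\cdots}_{m_{\alpha\beta}} \;=\; \underbrace{\tilde\tau_{s_\beta}\tilde\tau_{s_\alpha}\tilde\tau_{s_\beta}\cdots}_{m_{\alpha\beta}}
\]
for all pairs $\alpha,\beta\in\Pi$. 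Using the commutation identity from the first step iteratively, both sides can be rewritten as $t_{w_0^{\alpha\beta}}\cdot p_{\alpha\beta}(\alpha,\beta)$ modulo lower-order terms, where $w_0^{\alpha\beta}$ is the longest element of the dihedral subgroup $\langle s_\alpha,s_\beta\rangle$ and $p_{\alpha\beta}$ is a polynomial in $\alpha,\beta$. The equality then reduces to a computation inside the rank-two parabolic subalgebra $\mathbb{H}_{\{\alpha,\beta\}}$ and can be verified case-by-case on the reduced rank-two root systems $A_1\times A_1$, $A_2$, $B_2$, $G_2$. Alternatively, one can exhibit a faithful $\mathbb{H}$-module (for example the polynomial representation $S(V)$ localized at products of roots) on which both sides of the braid relation can be compared as explicit operators; the commutation identity then forces the equality.

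Finally, once $\tilde\tau_w$ is well-defined, I would pass to $\tau_w$ by multiplying on the right by $\prod_{\alpha\in R(w)}\alpha^{-1}$. The set $R(w)$ is independent of the reduced expression (it equals $R^+\cap w^{-1}(-R^+)$), and since all elements of $S(V)$ occurring here commute, the product $\prod_{\alpha\in R(w)}\alpha$ is an unambiguous element of $\mathcal O(J)$ (it lies in the multiplicative set inverted, because every $\alpha\in R(w)$ fails to lie in $V_J$ whenever $w\in W^J$, which is where the condition $w\in W^J$ is used). Independence of reduced expression for $\tau_w$ follows. The main obstacle, as expected, is the rank-two braid verification; the $B_2$ and $G_2$ cases involve unequal parameters $k_\alpha, k_\beta$ and require a careful, somewhat lengthy polynomial computation, which is why in practice one cites \cite{KR} rather than redoing it here.
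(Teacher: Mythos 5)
Your proposal is correct in outline, but it is worth comparing it honestly with what the paper does: the paper's entire proof is the citation to \cite[Proposition 2.5(e)]{KR} together with the remark that the extra assumption made there is immaterial, whereas you reconstruct the skeleton of the argument — the commutation identity $\widetilde{\tau}_{s_\alpha}v=s_\alpha(v)\widetilde{\tau}_{s_\alpha}$, Matsumoto's theorem reducing everything to rank-two braid identities for the unnormalized intertwiners, and the observation that the normalizing factor $\prod_{\alpha\in R(w)}\alpha^{-1}$ is word-independent and invertible in $\mathcal O(J)$ because $R(w)\subset R^+\setminus R^+_J$ for $w\in W^J$ — and then, at the decisive step (the rank-two verification), you too fall back on \cite{KR}. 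So in substance both proofs rest on the same citation; what your write-up buys is an explicit account of why the normalization causes no trouble and where $w\in W^J$ enters, which the paper leaves implicit. Two caveats. First, your alternative argument ("a faithful module on which the commutation identity forces equality") is too quick as stated: commutation with all of $S(V)$ only pins an element of the localized algebra down up to a right factor from the field of fractions, so one must additionally match leading coefficients — e.g.\ the coefficient of $t_w$ in $\widetilde{\tau}_w$, written with polynomial coefficients on the right, equals $\prod_{\gamma\in R(w)}\gamma$ for every reduced word — or invoke uniqueness of intertwiners between generic principal series; with that supplement this route actually avoids the rank-two case analysis altogether. Second, the paper remarks that this part of the theory is meant to apply to general graded Hecke algebras, including non-crystallographic ones, so if you want that generality your rank-two list $A_1\times A_1$, $A_2$, $B_2$, $G_2$ must be enlarged to all dihedral types $I_2(m)$ (which is the setting \cite{KR} treats).
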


\begin{proof}
Note that there is an assumption in \cite[Proposition 2.5(e)]{KR} but the proof still applies.
\end{proof}

\begin{lemma} \label{lem basic comm lem}
Let $w \in W^J$ and let $v \in V \subset \mathcal O(J)$. Then $v\tau_w=\tau_w w^{-1}(v)$.
\end{lemma}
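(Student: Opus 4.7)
The plan is to reduce the identity to the unnormalized intertwiner $\widetilde{\tau}_w$ from (\ref{eqn tilde tau}), prove the commutation relation there by induction on the length of $w$, and then transfer back to $\tau_w$ using that the correction factor $\prod_{\alpha \in R(w)} \alpha^{-1}$ lies in the commutative ring $\mathcal O(J)$ and therefore commutes with every $v \in V \subset S(V)$.

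First I would treat the base case $w = s_\alpha$ for a single simple reflection. Using $t_{s_\alpha} v = s_\alpha(v) t_{s_\alpha} + k_\alpha \langle v, \alpha^\vee\rangle$ and $s_\alpha(v) = v - \langle v, \alpha^\vee\rangle \alpha$, a short direct computation gives
\[
\widetilde{\tau}_{s_\alpha} v = t_{s_\alpha}\alpha v - k_\alpha v = s_\alpha(v)\bigl(t_{s_\alpha}\alpha - k_\alpha\bigr) + k_\alpha\bigl(s_\alpha(v) + \langle v,\alpha^\vee\rangle \alpha - v\bigr) = s_\alpha(v)\,\widetilde{\tau}_{s_\alpha},
\]
where the bracketed term vanishes by the formula for $s_\alpha(v)$. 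This is the key one-step identity.

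Next I would induct on $\ell(w)$ to prove $\widetilde{\tau}_w v = w(v)\,\widetilde{\tau}_w$ for all $w \in W$ (not only in $W^J$, since $\widetilde{\tau}_w$ is defined in $\mathbb{H}$). Picking a reduced expression $w = s_{\alpha_r}\cdots s_{\alpha_1}$ and writing $w = s_{\alpha_r}w'$ with $\ell(w') = \ell(w)-1$, one has $\widetilde{\tau}_w = \widetilde{\tau}_{s_{\alpha_r}}\widetilde{\tau}_{w'}$ directly from (\ref{eqn tilde tau}), so
\[
\widetilde{\tau}_w v = \widetilde{\tau}_{s_{\alpha_r}}\widetilde{\tau}_{w'} v = \widetilde{\tau}_{s_{\alpha_r}}\, w'(v)\,\widetilde{\tau}_{w'} = s_{\alpha_r}(w'(v))\,\widetilde{\tau}_{s_{\alpha_r}}\widetilde{\tau}_{w'} = w(v)\,\widetilde{\tau}_w,
\]
using the inductive hypothesis and the base case applied to the element $w'(v) \in V$. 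Independence of reduced expression for this intermediate identity is automatic because both sides are intrinsically defined; it is only $\tau_w$ itself that needs Proposition~\ref{prop indep cho base}.

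Finally, for $w \in W^J$, the standard fact $R(w) \subseteq R^+ \setminus R_J^+$ ensures every $\alpha \in R(w)$ lies in $V \setminus V_J$, so $\prod_{\alpha \in R(w)}\alpha^{-1}$ is a well-defined element of $\mathcal O(J)$; being in $\mathcal O(J)$ it commutes with $v \in S(V)$. Therefore
\[
\tau_w v = \widetilde{\tau}_w \Bigl(\prod_{\alpha \in R(w)}\alpha^{-1}\Bigr) v = \widetilde{\tau}_w v \Bigl(\prod_{\alpha \in R(w)}\alpha^{-1}\Bigr) = w(v)\,\tau_w,
\]
and substituting $w^{-1}(v)$ for $v$ yields the claimed $v\,\tau_w = \tau_w\, w^{-1}(v)$. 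There is no real obstacle: the only subtle point is confirming that the correction factor lies in the commutative localization and that the unnormalized intertwiner multiplies correctly along a reduced word, both of which are immediate from the definitions.
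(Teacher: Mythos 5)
Your proof is correct and follows essentially the same route as the paper: the paper's proof consists exactly of the one-step identity $v(t_{s_\alpha}\alpha-k_\alpha)=(t_{s_\alpha}\alpha-k_\alpha)s_\alpha(v)$, leaving the iteration along a reduced word and the commutation with the normalizing factor $\prod_{\alpha\in R(w)}\alpha^{-1}\in\mathcal O(J)$ implicit. You simply spell out those implicit steps (induction on $\ell(w)$ for $\widetilde\tau_w$ and the transfer to $\tau_w$), which is fine.
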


\begin{proof}
It suffices to verify that for $v \in V$, $v(t_{s_{\alpha}}\alpha-k_{\alpha})=(t_{s_{\alpha}}\alpha-k_{\alpha})s_{\alpha}(v)$, which follows from $k_{\alpha}(v- s_{\alpha}(v))=k_{\alpha} \langle v, \alpha^{\vee}\rangle \alpha$ and a commutation relation of the graded Hecke algebra.
\end{proof}

\begin{proposition} \label{prop intertwin basis}
Let $x \in \mathbb{H} \otimes_{\mathbb{H}_J}U$. Then there exists a holomorphic element $x_{\mathbf a}$ in $\mathcal H^J \otimes_{\mathcal H_J} U_{\mathbf a}$ of the form
\[ \sum_{w \in W^J} \tau_w \left(\sum_{i=1} ^{N_w}q_{w,i} \otimes u_{w,i} \right)     \quad (q_{w,i} \in \mathcal O(J),\ u_{w,i} \in U, N_w \in \mathbb{Z}) \]
such that $x_{\mathbf a}|_{\mathbf a=0}=x$.
\end{proposition}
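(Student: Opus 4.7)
The plan is to induct on the length $N$ of the longest $w_0 \in W^J$ in the support of $x$ with respect to the PBW-type decomposition $\mathbb{H} \otimes_{\mathbb{H}_J} U \cong \bigoplus_{v \in W^J} t_v \otimes U$. The base case $N=0$ is immediate since $x \in 1 \otimes U$ and $\tau_e = 1$, so $x_{\mathbf{a}} := 1 \otimes x = \tau_e(1 \otimes x)$ works.

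The inductive step rests on a leading-term expansion of $\widetilde{\tau}_w$. Iteratively commuting each polynomial factor $\alpha_i$ through the $t_{s_{\alpha_j}}$'s ($j < i$) in the product defining $\widetilde{\tau}_w$ via the defining relation $t_{s_\alpha} v = s_\alpha(v) t_{s_\alpha} + k_\alpha \langle v, \alpha^\vee \rangle$, and regrouping according to $\mathbb{H} = \bigoplus_{v \in W^J} t_v \mathbb{H}_J$, one obtains
\[ \widetilde{\tau}_w = t_w \cdot p_w + \sum_{v \in W^J,\, \ell(v) < \ell(w)} t_v \cdot h_{v, w} \qquad (h_{v, w} \in \mathbb{H}_J), \]
where $p_w := \prod_{\alpha \in R(w)} \alpha \in S(V)$. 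Consequently, for $q \in \mathcal{O}(J)$ and $u \in U$,
\[ \tau_w(q \otimes u) = \widetilde{\tau}_w \otimes (\pi_{U_{\mathbf{a}}}(p_w^{-1} q) u) = t_w \otimes (\pi_{U_{\mathbf{a}}}(q) u) + \sum_{v \in W^J,\, \ell(v) < \ell(w)} t_v \otimes (\pi_{U_{\mathbf{a}}}(h_{v, w} p_w^{-1} q) u), \]
so the $t_w$-coefficient is the tautologically holomorphic $\pi_{U_{\mathbf{a}}}(q) u$ (in particular equal to $u$ when $q = 1$), while lower $t_v$-coefficients may carry $\mathbf{a}$-poles inherited from $p_w^{-1}$.

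For $x$ of top length $N$, let $u_{w_0}$ be the $t_{w_0}$-coefficient of $x$ for each $w_0 \in W^J$ with $\ell(w_0) = N$, and form $y := \sum_{\ell(w_0) = N} \tau_{w_0}(1 \otimes u_{w_0})$. Its length-$N$ part coincides with that of $x$, so $x - y$, viewed in $\mathcal{H}^J \otimes_{\mathcal{H}_J} U_{\mathbf{a}}$, has support strictly below length $N$ but is $\mathbf{a}$-rational, possibly singular at $\mathbf{a} = 0$. Each singular $t_v$-contribution ($\ell(v) < N$) is absorbed into an additional term $\tau_v(q' \otimes u')$, whose $t_v$-coefficient $\pi_{U_{\mathbf{a}}}(q') u'$ is freely adjustable through the choice of $q' \in \mathcal{O}(J)$ and $u' \in U$; doing so trades the $t_v$ singular contribution for new ones at strictly lower lengths, and iterating drives the singular support down to $\{e\}$, where $\tau_e = 1$ produces no further lower terms. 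The resulting holomorphic length-$<N$ remainder is then handled by the inductive hypothesis.

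The main obstacle is to verify that the specific singular $t_v$-coefficients arising can indeed be matched by finite $\mathbb{C}$-linear combinations of $\pi_{U_{\mathbf{a}}}(q') u'$ with $q' \in \mathcal{O}(J)$, $u' \in U$. This is guaranteed by the observation that $R(w_0) \subset R^+ \setminus R_J^+$ whenever $w_0 \in W^J$, so each $\alpha \in R(w_0)$ lies outside $V_J$ and $\alpha^{-1} \in \mathcal{O}(J)$; hence all denominators propagated from $p_{w_0}^{-1}$ at every stage remain within the admissible localization $\mathcal{O}(J)$.
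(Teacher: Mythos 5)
Your overall strategy, expanding $\widetilde{\tau}_w$ unitriangularly against the decomposition $\mathbb{H}=\bigoplus_{v\in W^J}t_v\mathbb{H}_J$ and converting $x=\sum_{w\in W^J}t_w\otimes u_w$ into $\tau$-form by induction on length, is essentially the paper's argument (the paper writes $x$ in the $t_w\otimes u_w$ form and invokes in one line that the $\tau_w$, $w\in W^J$, form a basis over $\mathcal H_J$), and your leading-term formula $\widetilde{\tau}_w=t_w\prod_{\alpha\in R(w)}\alpha+\sum_{v\in W^J,\,\ell(v)<\ell(w)}t_v h_{v,w}$ with $h_{v,w}\in\mathbb{H}_J$ is correct. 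The problem is that the step you yourself single out as the main obstacle is not actually established by the reason you give. The observation $R(w_0)\subset R^+\setminus R_J^+$ only guarantees $p_{w_0}^{-1}\in\mathcal O(J)$, i.e.\ that $\tau_{w_0}$ is an admissible element; it does not show that the lower coefficients $\pi_{U_{\mathbf a}}(h_{v,w_0}p_{w_0}^{-1}q)u$ lie in $\mathrm{span}\{\pi_{U_{\mathbf a}}(q')u' : q'\in\mathcal O(J),\ u'\in U\}$, which is exactly what your cancellation scheme needs. That span is in general a proper subspace of $U_{\mathbf a}$ (its elements have denominators that are products of affine-linear forms in $\mathbf a$ built from weights of $U$), so the matching claim is not automatic. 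The missing ingredient is the rewriting of elements of $\mathcal H_J$ with the localized polynomial part on the left: $h_{v,w_0}p_{w_0}^{-1}q\in\mathcal H_J$, and $\mathcal H_J=\bigoplus_{y\in W_J}\mathcal O(J)\,t_y$ as a left $\mathcal O(J)$-module, because commuting $t_{s_\beta}$ ($\beta\in J$) past $(v-c)^{-1}$ with $v\in V\setminus V_J$ produces $(s_\beta(v)-c)^{-1}$ and $s_\beta(v)\in V\setminus V_J$; this is precisely the content of Lemma \ref{lem rat sturcture}. With that, $\pi_{U_{\mathbf a}}(h_{v,w_0}p_{w_0}^{-1}q)u=\sum_l\pi_{U_{\mathbf a}}(q_l)\bigl(\pi_U(t_{y_l})u\bigr)$ with $q_l\in\mathcal O(J)$ and $\pi_U(t_{y_l})u\in U$, which is the required shape, and the same holds at every later stage of your iteration. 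Once this is inserted the argument closes; moreover you may then simply cancel the entire coefficient at each stage rather than only its "singular part" (which is not canonically defined for multivariable rational functions), recovering exactly the triangular change of basis between $\{t_w\}$ and $\{\tau_w\}$ that the paper cites.

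One smaller inaccuracy: $\pi_{U_{\mathbf a}}(q)u$ is not "tautologically holomorphic" for general $q\in\mathcal O(J)$; if $q$ contains a factor $(v-c)^{-1}$ with $c=\gamma^{\vee}(v)$ for some weight $\gamma^{\vee}$ of $U$, then $\pi_{U_{\mathbf a}}(q)u$ has a pole at $\mathbf a=0$. This does not damage your construction, since the length-$N$ terms use $q=1$ and the later correction terms are intended to be singular, but the parenthetical claim should be removed.
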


\begin{proof}
Any element in $\mathbb{H} \otimes_{\mathbb{H}_J} U$ can be written into the form of $\sum_{w \in W^J} t_w \otimes u_w$ The statement then follows from the fact that $\tau_w$ forms an  $\mathcal O(J)$-basis for $\mathcal H_J$. 
\end{proof}

\begin{remark}
One advantage to have such expression as in Proposition \ref{prop intertwin basis} is the nice commutation relation with the subalgebra $S(V)$. A drawback is hard to obtain certain uniqueness statement for such expression. For example, for $R=\left\{ \alpha \right\}$ of type $A_1$, let $U=\mathbb{C}u$ be the $1$-dimensional $S(V)$-representation with the weight $0$. We have $(\tau_{s_{\alpha}}\alpha+ k_{\alpha}) \otimes u|_{\mathbf a=0} =0$.

\end{remark}

Let $U$ be a finite-dimensional $\mathbb{H}_J$-module. For $\gamma^{\vee} \in V^{\vee}$, define 
\[ W(J, U, \gamma^{\vee}) = \left\{ (w, \lambda^{\vee}) \in W^J \times \mathrm{Wgt}(U) : w(\lambda^{\vee})=\gamma^{\vee}  \right\}.\]

\begin{proposition} \label{prop wv limit form}
Let $U$ be a finite-dimensional $\mathbb{H}_J$-module. Let $\gamma^{\vee} \in V^{\vee}$. Let $x \in \mathbb{H} \otimes_{\mathbb{H}_J} U$ be a generalized weight vector with weight $\gamma^{\vee}$. Then there exists a holomorphic element $x_{\mathbf a} \in \mathcal H^J \otimes_{\mathcal H_J} U_{\mathbf a}$ of the form
 \[    \sum_{(w, \lambda^{\vee}) \in  W(J, U,\gamma^{\vee})} \tau_w \left(\sum_{i=1}^{N_{w, \lambda^{\vee}}}q_{w,\lambda^{\vee}, i} \otimes u_{w,\lambda^{\vee}, i} \right) \quad (q_{w,\lambda^{\vee}, i} \in \mathcal O(J),\ u_{w,\lambda^{\vee}, i} \in 1 \otimes U \subset U_{\mathbf a}, N_{w, \lambda^{\vee}} \in \mathbb{Z}) \]
such that $x_{\mathbf a}|_{\mathbf a=0}=x$ and each $u_{w,\lambda^{\vee}, i}$ is a generalized weight vectors with the weight $\lambda^{\vee}$.
\end{proposition}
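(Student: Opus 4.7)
The plan is to refine the expansion from Proposition \ref{prop intertwin basis} by decomposing each $u_{w,i}$ into generalized $\mathbb{H}_J$-weight components of $U$ and then keeping only those summands whose limit weight at $\mathbf{a}=0$ equals $\gamma^{\vee}$. The key technical point is to show that this restricted partial sum remains holomorphic at $\mathbf{a}=0$; we achieve this via a block-projection argument on the generalized $\mathbb{H}$-weight decomposition of $\mathcal{H}^J \otimes_{\mathcal{H}_J} U_{\mathbf{a}}$.

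Starting from Proposition \ref{prop intertwin basis}, write $x = y_{\mathbf{a}}|_{\mathbf{a}=0}$ with
\[ y_{\mathbf{a}} = \sum_{w \in W^J} \tau_w\Bigl( \sum_i q_{w,i} \otimes u_{w,i} \Bigr) \]
holomorphic at $\mathbf{a}=0$. Decompose $U = \bigoplus_{\lambda^{\vee} \in \mathrm{Wgt}(U)} U_{\lambda^{\vee}}$ into generalized $\mathbb{H}_J$-weight spaces, write each $u_{w,i} = \sum_{\lambda^{\vee}} u_{w,i,\lambda^{\vee}}$ accordingly, and collect the refined sum by $\mu^{\vee} := w(\lambda^{\vee})$ to obtain $y_{\mathbf{a}} = \sum_{\mu^{\vee}} y_{\mathbf{a},\mu^{\vee}}$, where
\[ y_{\mathbf{a},\mu^{\vee}} = \sum_{(w,\lambda^{\vee}) \in W(J,U,\mu^{\vee})} \tau_w\Bigl( \sum_i q_{w,i} \otimes u_{w,i,\lambda^{\vee}} \Bigr). \]
Using Lemma \ref{lem basic comm lem} together with the fact that $w^{-1}(v) \in V$ commutes with $q \in \mathcal{O}(J)$, one checks for $u \in U_{\lambda^{\vee}}$ that
\[ (v - \lambda^{\vee}(w^{-1}(v)) - \eta_{\mathbf{a}}^{\vee}(w^{-1}(v)))^N \tau_w(q \otimes u) = \tau_w\bigl( q \otimes (\pi_U(w^{-1}(v)) - \lambda^{\vee}(w^{-1}(v)))^N u \bigr), \]
which vanishes for $N$ larger than the nilpotence order of $\pi_U(w^{-1}(v)) - \lambda^{\vee}(w^{-1}(v))$ on $U_{\lambda^{\vee}}$. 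Hence $\tau_w(q \otimes u)$ is a generalized $\mathbb{H}$-weight vector of weight $w(\lambda^{\vee}) + w(\eta_{\mathbf{a}}^{\vee})$, and $y_{\mathbf{a},\mu^{\vee}}$ lies in the \emph{block} $B_{\mu^{\vee}}(\mathbf{a})$ consisting of those generalized weight spaces whose weights specialize to $\mu^{\vee}$ at $\mathbf{a}=0$.

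It remains to show that the block projection $P_{\gamma^{\vee}}(\mathbf{a})$ onto $B_{\gamma^{\vee}}(\mathbf{a})$ is holomorphic at $\mathbf{a}=0$, for then $y_{\mathbf{a},\gamma^{\vee}} = P_{\gamma^{\vee}}(\mathbf{a}) y_{\mathbf{a}}$ is automatically holomorphic and its specialization is the generalized $\gamma^{\vee}$-weight component of $x$, which equals $x$ by hypothesis. For distinct $w, w' \in W^J$, since $W_J$ is the pointwise stabilizer of $V_J^{\vee,\bot}$ in $W$ (and $w, w'$ are distinct minimal coset representatives), $w|_{V_J^{\vee,\bot}} \neq w'|_{V_J^{\vee,\bot}}$, so $w(\eta_{\mathbf{a}}^{\vee}) \not\equiv w'(\eta_{\mathbf{a}}^{\vee})$ as linear forms in $\mathbf{a}$; consequently weights belonging to different blocks stay separated in a punctured neighborhood of $\mathbf{a}=0$. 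Choosing $v \in V$ whose block-limit values $\mu^{\vee}(v)$ are pairwise distinct, $P_{\gamma^{\vee}}(\mathbf{a})$ can be written as $\frac{1}{2\pi i}\oint_{\Gamma}(z - v)^{-1}\, dz$ for a small loop $\Gamma$ enclosing $\gamma^{\vee}(v)$ but no other block-limit value, which is holomorphic in $\mathbf{a}$ by standard perturbation theory. The main obstacle is precisely this holomorphicity across the within-block weight collisions at $\mathbf{a}=0$, resolved by working with whole blocks rather than individual weights.
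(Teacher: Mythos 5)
Your argument is correct in substance but takes a genuinely different route from the paper's. The paper proves Proposition \ref{prop wv limit form} in Appendix B by an explicit induction on the length of $w\in W^J$: using the case analysis of Lemma \ref{lem various l case a}, it constructs for each pair $(w_1,\lambda_1^{\vee})$ with $w_1(\lambda_1^{\vee})=\gamma^{\vee}$ a holomorphic element with leading term $\tau_{w_1}p\otimes u$ (where $p\in S(V)$ and $\lambda_1^{\vee}(p)\neq 0$) plus terms indexed by Bruhat-smaller elements of $W^J$, and then concludes by linear independence and a dimension count that the specializations of these elements span the generalized $\gamma^{\vee}$-weight space. You instead take an arbitrary holomorphic lift from Proposition \ref{prop intertwin basis}, regroup it according to the limiting weight $w(\lambda^{\vee})$, and extract the $\gamma^{\vee}$-block by a Riesz-type spectral projection; the paper's proof is elementary and self-contained (only Bruhat combinatorics and the $\tau_w$-relations) and produces extra triangularity information, while yours is shorter and conceptually cleaner at the cost of invoking perturbation theory. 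A few compressed steps should be spelled out to make it complete: (i) the matrix of the action of $v\in V$ on the lattice spanned by the $t_w\otimes u_i$ ($w\in W^J$) is polynomial in $\mathbf a$, which is what makes $\frac{1}{2\pi i}\oint_{\Gamma}(z-\pi_{\mathbf a}(v))^{-1}\,dz$ a holomorphic family near $\mathbf a=0$; (ii) the separation of distinct blocks follows simply from the distinctness of the limit values $\mu^{\vee}(v)$ for your chosen $v$ --- the fact that distinct $w,w'\in W^J$ restrict differently to $V_J^{\vee,\bot}$ is true but is not what is needed here (it would be needed to separate weights \emph{within} a block, which your block device is precisely designed to avoid); (iii) since the entries of the block projection over $\mathbb{C}(\mathbf a)$ are rational functions, one must identify them with the holomorphic Riesz projections at specializations (they agree off the polar locus, hence are holomorphic at $0$); and (iv) the final identity $x_{\mathbf a}|_{\mathbf a=0}=x$ uses that the specialized projection at $\mathbf a=0$ is the projection onto the generalized $\gamma^{\vee}$-weight space of $\mathbb{H}\otimes_{\mathbb{H}_J}U$ (equivalently, that specializations of holomorphic block elements are generalized weight vectors for the limiting weight, plus uniqueness of the weight decomposition). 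None of these is a gap in the idea; they are routine completions, and the resulting element has exactly the required form since the coefficients $q_{w,i}\in\mathcal O(J)$ and the weight components $u_{w,i,\lambda^{\vee}}\in 1\otimes U$ are inherited from Proposition \ref{prop intertwin basis}.
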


Since a complete proof for Proposition \ref{prop wv limit form} is lengthy, we separate out into an appendix. The idea of our proof is to construct weight vectors via an induction of the length of Weyl groups elements.



\subsection{Generalized standard modules} \label{ss gsms}
In this section, we shall refine our intertwining operator to the case of $(J,U) \in \Xi_L^g$. Recall that $W^J$ is the set consisting of all the minimal representatives for $W/ W_J$. Let $w_{0,J}$ be the longest element in $W_J$ and let $w^J$ be the longest element in $W^J$.

 We now consider two involutions. The first one is the map $\theta$ given by $\theta(\alpha)=-w_0(\alpha)$ ($\alpha \in J$). The second one is the map $\theta_J$ given by $\theta_J(\alpha)=-w_{0,J}(\alpha)$ ($\alpha \in J$). We also define $\phi_J=\theta \circ \theta_J$, which is not an involution in general. Since we only work for one fixed $J$, we shall simply write $\phi$ for $\phi_J$ most of time.

\begin{lemma} \label{lem bijective J}
Let $J \subset \Pi$. Then
\begin{enumerate}
\item the map $w \mapsto ww^J$ from $W^{\theta(J)}$ to $W^J$ is a bijective well-defined function.
\item For $w \in W^{\theta(J)}$, $l(w)+l(ww^J)=l(w^J)$.
\end{enumerate}
\end{lemma}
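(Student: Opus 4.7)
The plan is to reduce both parts to the standard factorization $w_0 = w^J w_{0,J}$ (where $w_0$ denotes the longest element of $W$) together with its length-additivity $l(w_0) = l(w^J) + l(w_{0,J})$, and the classical fact $l(uw_0) = l(w_0) - l(u)$ for all $u \in W$.

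First I would show that $w^J$ restricts to a bijection $J \to \theta(J)$. For $\alpha \in J$, using $w_{0,J}(\alpha) = -\theta_J(\alpha)$ and $w_0(\beta) = -\theta(\beta)$ for $\beta \in \Pi$, I compute
\[ w^J(\alpha) \;=\; w_0 w_{0,J}(\alpha) \;=\; w_0(-\theta_J(\alpha)) \;=\; \theta(\theta_J(\alpha)) \;=\; \phi_J(\alpha), \]
which lies in $\theta(J)$ because $\theta_J$ permutes $J$ and $\theta$ carries $J$ bijectively into $\theta(J)$; the restriction is bijective since $\theta$ and $\theta_J$ are involutions. For part (1), I would then invoke the standard characterization $u \in W^K \Leftrightarrow u(\alpha) > 0$ for every $\alpha \in K$: if $w \in W^{\theta(J)}$ and $\alpha \in J$, then $w^J(\alpha) \in \theta(J)$, so $(ww^J)(\alpha) = w(w^J(\alpha)) > 0$, proving $ww^J \in W^J$. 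The candidate inverse $u \mapsto u(w^J)^{-1}$ sends $W^J$ back into $W^{\theta(J)}$ by the same argument applied to the inverse bijection $(w^J)^{-1}: \theta(J) \to J$, so right multiplication by $w^J$ is a bijection $W^{\theta(J)} \to W^J$.

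For part (2), I would multiply the identity $w^J w_{0,J} = w_0$ on the left by $w$ to obtain $(ww^J)w_{0,J} = ww_0$. On one hand, $l(ww_0) = l(w_0) - l(w)$ because $w_0$ is the longest element. On the other hand, part (1) gives $ww^J \in W^J$, so the product with $w_{0,J} \in W_J$ is length-additive: $l((ww^J)w_{0,J}) = l(ww^J) + l(w_{0,J})$. Equating these two expressions yields
\[ l(ww^J) \;=\; l(w_0) - l(w) - l(w_{0,J}) \;=\; l(w^J) - l(w), \]
which is the claimed identity. The only substantive input is the bijection $w^J|_J : J \to \theta(J)$; once that is in hand both parts follow formally from the standard length calculus in Coxeter groups.
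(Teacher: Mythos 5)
Your proof is correct and follows essentially the same route as the paper: both parts rest on the factorization $w^J=w_0w_{0,J}$, the fact that $w^J$ carries the simple roots of $J$ to those of $\theta(J)$, and the standard length identities $l(ww_0)=l(w_0)-l(w)$ and $l(xy)=l(x)+l(y)$ for $x\in W^J$, $y\in W_J$. The only cosmetic difference is in bijectivity: you check directly that right multiplication by $(w^J)^{-1}$ lands in $W^{\theta(J)}$, whereas the paper identifies the inverse as right multiplication by $w^{\theta(J)}$ via the computation $w^Jw^{\theta(J)}=\mathrm{Id}$; these amount to the same thing.
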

\begin{proof}
For (1), we first show that the map is well-defined i.e. $ww^J \in W^J$ for $w \in W^{\theta(J)}$. It is equivalent to show that $ww^J(\alpha)>0$ for any $\alpha \in J$. Since $w^J=w_0w_{0,J}$, $w^J$ sends simple roots in $J$ to simple roots in $\theta(J)$, which implies $ww^J(\alpha)>0$ for $\alpha \in J$. To show the map is bijective, we shall show that the map $w \mapsto ww^{\theta(J)}$ from $W^J$ to $W^{\theta(J)}$ gives the inverse map. This indeed follows from the following equations:
\[ w^Jw^{\theta(J)}=w^Jw^{\theta(J)}w_{0,\theta(J)}w_{0,\theta(J)}=w^Jw_0w_{0,\theta(J)}=w^Jw_{0,J}w_0=\mathrm{Id} .\] 
This proves (1).

For (2), first we have $l(ww^Jw_{0,J})  = l(ww_0)  = l(w_0)-l(w)$. On the other hand, we have
\begin{align*}
l(ww^Jw_{0,J}) & =l(ww^J)+l(w_{0,J}) \quad \mbox{ by (1) } 
\end{align*}
Now (2) is obtained by combining two equations.
\end{proof}

\begin{definition} \label{def delta map 1}
Let $\delta:V \rightarrow V$ be a linear isomorphism such that $\delta(J) \subset \Pi$. The map $\delta$ induces a linear isomorphism $\delta: V^{\vee} \rightarrow V^{\vee}$ such that $\delta^{-1}(\gamma^{\vee})(v)=\gamma^{\vee}(\delta(v))$.  Then $\delta$ induces a map $\widetilde{\delta}: \mathbb{H}_J \rightarrow \mathbb{H}_{\delta(J)}$ given by $\widetilde{\delta}(t_{s_{\alpha}})=t_{s_{\delta(\alpha)}}$, and $\widetilde{\delta}(v)=\delta(v)$ for $v \in V$. It is straightforward to verify that $\widetilde{\delta}$ is an algebra map. We shall simply write $\delta$ for $\widetilde{\delta}$ later. The map $\delta$ can also be similarly extended to $\mathcal H_J$.

Let $U$ be an $\mathbb{H}_J$-module. Define $\delta(U)$ to be the $\mathbb{H}_{\delta(J)}$-module such that $U$ is isomorphic to $\delta(U)$ as vector spaces via a map still denoted $\delta_U: U \rightarrow \delta(U)$ and the $\mathbb{H}_{\delta(J)}$-module is determined by
\[   \pi_{\delta(U)}(h) \delta_U(u) =\delta_U(\pi_{U}(\delta^{-1}(h))u) .\]
For lightening the notation, we simply write $\delta$ if the meaning is clear from the context.
\end{definition}

\begin{lemma} \label{lem singleton imply exist}
Let $(J, U) \in \Xi_L^g$. Let $\gamma^{\vee}$ be a weight of $U$. Then
\begin{enumerate}
\item $W(J, U, \gamma^{\vee})=\left\{(1, \gamma^{\vee}) \right\}$ and
\item $W(\theta(J), \phi(U), \gamma^{\vee})=\left\{ (w^{\theta(J)}, w^J(\gamma^{\vee})) \right\}$.
\end{enumerate}
\end{lemma}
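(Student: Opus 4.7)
My plan for (1) is to combine a sign analysis in the simple-coroot basis with a norm argument using a $W$-invariant inner product. The inclusion $(1, \gamma^{\vee}) \in W(J, U, \gamma^{\vee})$ is immediate, so the content is uniqueness.

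Given $(w, \lambda^{\vee}) \in W(J, U, \gamma^{\vee})$, I would expand the real parts using that $U$ is $\mathbb{H}_J$-tempered:
\[
\Re(\lambda^{\vee}) = \sum_{\alpha \in J} a_{\alpha} \alpha^{\vee} + \nu(J,U), \qquad \Re(\gamma^{\vee}) = \sum_{\alpha \in J} c_{\alpha} \alpha^{\vee} + \nu(J,U),
\]
with $a_{\alpha}, c_{\alpha} \leq 0$. Applying $w$ and equating $w\,\Re(\lambda^{\vee}) = \Re(\gamma^{\vee})$ rearranges to
\[
w\nu(J,U) - \nu(J,U) = \sum_{\alpha \in J} c_{\alpha} \alpha^{\vee} - \sum_{\beta \in J} a_{\beta} (w\beta)^{\vee}.
\]
Two positivity facts enter: for $w \in W^J$, each $(w\beta)^{\vee}$ with $\beta \in J$ is a non-negative integer combination of simple coroots (being a positive coroot); and since $\nu(J,U)$ is dominant, $\nu(J,U) - w\nu(J,U)$ is a non-negative real combination of simple coroots. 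Expanding both sides in the basis $\{\alpha^{\vee} : \alpha \in \Pi\}$ and isolating the coefficient of $\alpha^{\vee}$ for $\alpha \in \Pi \setminus J$, the right side is a sum of non-positive terms (the $\sum_{\alpha \in J} c_{\alpha} \alpha^{\vee}$ contributes nothing to such coefficients) while the left side is non-positive; the equality forces cancellation, and in particular $w\nu(J,U) - \nu(J,U) \in V_J^{\vee}$.

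I would then invoke a $W$-invariant inner product on $V^{\vee}$. Under the induced isometry $V \cong V^{\vee}$, the decomposition $V^{\vee} = V_J^{\vee} \oplus V_J^{\vee,\bot}$ corresponds to the orthogonal decomposition $V = V_J \oplus V_J^{\bot}$, hence $V_J^{\vee} \perp V_J^{\vee,\bot}$. Orthogonality of $\nu(J,U) \in V_J^{\vee,\bot}$ and $w\nu(J,U) - \nu(J,U) \in V_J^{\vee}$, combined with the $W$-invariance $\|w\nu(J,U)\| = \|\nu(J,U)\|$, yields $w\nu(J,U) = \nu(J,U)$. Since $\nu(J,U)$ lies on the facet of the dominant chamber indexed by $J$, $\Stab_W(\nu(J,U)) = W_J$, so $w \in W_J \cap W^J = \{1\}$ and thus $\lambda^{\vee} = \gamma^{\vee}$.

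For (2), I would first observe that $\phi = \theta \circ \theta_J = (-w_0)\circ(-w_{0,J}) = w_0 w_{0,J} = w^J$ as a linear map on $V$, and correspondingly on $V^{\vee}$. Hence the weights of $\phi(U)$ are $\{w^J \mu^{\vee} : \mu^{\vee} \in \mathrm{Wgt}(U)\}$, so $w^J \gamma^{\vee} \in \mathrm{Wgt}(\phi(U))$; Lemma \ref{lem bijective J}(1) gives $w^{\theta(J)} w^J = 1$, so $(w^{\theta(J)}, w^J \gamma^{\vee}) \in W(\theta(J), \phi(U), \gamma^{\vee})$. For uniqueness, any such pair $(w, \lambda^{\vee})$ has $\lambda^{\vee} = w^J \lambda'^{\vee}$ for some $\lambda'^{\vee} \in \mathrm{Wgt}(U)$, and $ww^J \in W^J$ by the same lemma; applying part (1) to $(ww^J, \lambda'^{\vee}) \in W(J, U, \gamma^{\vee})$ forces $ww^J = 1$ and $\lambda'^{\vee} = \gamma^{\vee}$, giving $w = w^{\theta(J)}$ and $\lambda^{\vee} = w^J \gamma^{\vee}$. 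I expect the main obstacle to be the sign-cancellation step in (1): careful bookkeeping of the expansions of $(w\beta)^{\vee}$ and of $\omega_{\gamma}^{\vee} - w\omega_{\gamma}^{\vee}$ in simple coroots is needed to conclude that the coefficients along $\alpha^{\vee}$ for $\alpha \in \Pi \setminus J$ vanish simultaneously on both sides.
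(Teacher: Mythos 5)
Your proof is correct, and part (1) takes a genuinely different route from the paper. The paper disposes of (1) in one line by quoting Lemma \ref{lem decompose for lang class}: the weights of $I(J,U)$ arising from $w\neq 1$ lie in the complement $Y$ of $U$ in $\mathrm{Res}_{\mathbb{H}_J}I(J,U)$ and satisfy $\nu(\cdot)<\nu(J,U)$, hence by Lemma \ref{lem no WJ conj} cannot coincide with a weight of $U$; this leans on the Langlands-classification machinery imported from Evens and Kriloff--Ram. You instead prove directly that $w\in W^J$ with $w(\mathrm{Wgt}(U))\cap\mathrm{Wgt}(U)\neq\emptyset$ forces $w=1$, via the coroot-basis expansion, positivity of $(w\beta)^{\vee}$ for $\beta\in J$, dominance of $\nu(J,U)$, and then the $W$-invariant inner product together with $\mathrm{Stab}_W(\nu(J,U))=W_J$ and $W_J\cap W^J=\{1\}$. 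This is self-contained, avoids the induced module entirely, and is close in spirit to the paper's own proof of Lemma \ref{lem no WJ conj}; what the paper's route buys is brevity (it reuses results it needs anyway), what yours buys is independence from the decomposition of $\mathrm{Res}_{\mathbb{H}_J}I(J,U)$. Your part (2) is the same reduction as the paper's, via Lemma \ref{lem bijective J}, only phrased slightly more cleanly by feeding $(ww^J,\lambda'^{\vee})$ back into (1). Two small repairs: in the sign analysis, the coefficient of $\alpha^{\vee}$ for $\alpha\in\Pi\setminus J$ on the right-hand side is a sum of \emph{non-negative} terms (you wrote non-positive), while the left-hand side is non-positive; the conclusion that both vanish, so $w\nu(J,U)-\nu(J,U)\in V_J^{\vee}$, is exactly right, but the signs should be stated correctly. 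Also, you use that every weight of $U$ has the same component $\nu(J,U)$ in $V_J^{\vee,\bot}$; for $(J,U)\in\Xi_L^g$ this is where indecomposability enters, and it should be justified by Lemma \ref{lem com factor same nu} together with the observation that all weights of an irreducible $\mathbb{H}_J$-module lie in a single $W_J$-orbit and $W_J$ preserves the $V_J^{\vee,\bot}$-component of the real part.
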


\begin{proof}
Note that $W(J, U, \gamma^{\vee})$ is the set of weights of $I(J,U)$. Then (1) follows from Lemma \ref{lem decompose for lang class} (also see the proof of \cite{Ev} and \cite[Theorem 2.4]{KR}). 

For (2), by definitions, $w^J(\gamma^{\vee})=\phi(\gamma^{\vee})$ is a weight of $\phi(U)$. Since $w^{\theta(J)}(w^{J}(\gamma^{\vee}))=\gamma^{\vee}$ by Lemma \ref{lem bijective J}, $(w^{\theta(J)}, w^J(\gamma^{\vee})) \in W(\theta(J), \phi(U), -w_0(\gamma^{\vee}))$. To prove another inclusion, by Lemma \ref{lem bijective J}(1), it is equivalent to show that if there exists $w \in W^{\theta(J)}$ such that $w(-w_0(\gamma^{\vee}_1))=-w_0(\gamma^{\vee})$ for some weight $\gamma^{\vee}_1$ of $U$, then $w=1$. This is essentially the same as the proof of (1).
\end{proof}

We also define an analog for an $\mathbb H_{\delta(J)}$-module $\delta(U_{\mathbf a})$. 

\begin{definition}\label{def ua twist}
Define $\delta(U_{\mathbf{a}})$ to be an $\mathcal{H}_J$-module such that 
$\delta(U_{\mathbf{a}})$ is isomorphic to $\mathbb{C}(\mathbf a_1,\ldots,\mathbf a_r) \otimes_{\mathbb{C}} \delta(U)$ as vector spaces and the action of $\mathcal H_J$ is determined by
\[ \pi_{\delta(U_{\mathbf{a}}) }(h)\delta_{U_{\mathbf{a}}}(u_{\mathbf{a}}) =\delta_{U_{\mathbf{a}}}(\pi_{U_{\mathbf{a}} }(\delta^{-1}(h))u_{\mathbf{a}}) 
\]
Again, we shall simply write $\delta$ for $\delta_{U_{\mathbf{a}}}$ if there is no confusion. 

Recall $\theta$ is defined in the beginning of this section. We define the evaluation $|_{\mathbf a=0}$ for holomorphic elements of $\mathcal H^{\theta(J)} \otimes_{\mathcal H_{\theta(J)}} \phi(U_{\mathbf a})$ as (\ref{eqn evaluation map}) for $\mathcal H^J \otimes_{\mathcal H_J} U_{\mathbf a}$.

\end{definition}



We also remark that it does not really make sense to write $\phi(U)_{\mathbf a}$ since the definition of $U_{\mathbf a}$ depends on a choice of basis.

We now use the intertwining element $\tau_{w^{\theta(J)}}$ to define an intertwining operator from $\mathcal H^J \otimes_{\mathcal H_J} U_{\mathbf a}$ to $\mathcal H^{\theta(J)} \otimes_{\mathcal H_{\theta(J)}} \phi(U_{\mathbf a})$. One may consider as an analogue of a Knapp-Stein intertwining operator  (\cite{KS}, also see \cite[Section 14]{ALTV}). There are similar results for standard modules in the affine Hecke algebra setting in an unpublished work of Delorme-Opdam (also see relevant work in \cite{DO2}). 

We also remark that $(\theta(J), \phi(U))$ is not in $\Xi_L^g$ in general.

\begin{proposition} \label{prop intertwin properties}
Let $(J, U) \in \Xi_L^g$. Then:
\begin{enumerate}
\item  Any element $   \tau_{w^{\theta(J)}} \otimes \phi(u) \in \mathcal H^{\theta(J)} \otimes_{\mathcal H_{\theta(J)}} \phi(U_{\mathbf a})$ with $u \in 1 \otimes U \subset U_{\mathbf a}$ is holomorphic.
\item  The subspace
 \[\left\{  \tau_{w^{\theta(J)}} \otimes \phi(u_{\mathbf a}) : u_{\mathbf a} \in U_{\mathbf a} \right\} \] of $\mathcal{H}^{\theta(J)} \otimes_{\mathcal{H}_{\theta(J)}} U_{\mathbf a}$ is invariant under the $\mathbb{H}_{J}$-action and is isomorphic to $U_{\mathbf{a}}$ as an $\mathbb H_{J}$-module. The isomorphism is characterized by the map:
\[ 1 \otimes u_{\mathbf a} \mapsto \tau_{w^{\theta(J)}} \otimes \phi(u_{\mathbf a})   
\]
for $u \in 1 \otimes U \subset U_{\mathbf a}$. 
\item The isomorphism in (2) induces an $\mathbb{H}$-module isomorphism from $\mathcal{H}^J\otimes_{\mathcal{H}_J} U_{\mathbf a}$ to $\mathcal H^{\theta(J)} \otimes_{\mathcal{H}_{\theta(J)}} \phi(U_{\mathbf a})$
\item  The subspace
 \[\left\{ ( \tau_{w^{\theta(J)}} \otimes \phi(u))|_{\mathbf a=0} : u \in 1 \otimes U \subset U_{\mathbf a} \right\} \] of $\mathbb{H} \otimes_{\mathbb{H}_{\theta(J)}} \phi(U)$ is invariant under the $\mathbb{H}_{J}$-action and is isomorphic to $U$ as $\mathbb{H}_{J}$-modules via the following map:
\[  1\otimes u \mapsto (\tau_{w^{\theta(J)}} \otimes \phi(u))|_{\mathbf a=0}  .\]

\item The map in (4) induces an $\mathbb{H}$-map $ I(J, U) \rightarrow I(\theta(J), \phi(U))$. Moreover, if $(J,U) \in \Xi_L$, then the image of the map is isomorphic to the unique simple quotient $L(J, U)$ of $I(J,U)$. 
\end{enumerate}
\end{proposition}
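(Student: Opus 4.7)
I will prove the five parts in order, with (1) as the main technical step; parts (2)--(5) follow as largely formal consequences. The key tool for (1) is \prref{wv limit form} combined with \leref{singleton imply exist}(2), which together make the relevant holomorphic lift essentially unique.

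For (1): given a generalized weight vector $u \in 1 \otimes U$ of weight $\gamma^{\vee}$, I first observe that $\phi(u) \in \phi(U)$ has weight $\phi(\gamma^{\vee}) = w^J(\gamma^{\vee})$, so $t_{w^{\theta(J)}} \otimes \phi(u) \in \mathbb{H} \otimes_{\mathbb{H}_{\theta(J)}} \phi(U)$ is a generalized weight vector of weight $w^{\theta(J)} w^J(\gamma^{\vee}) = \gamma^{\vee}$, using $w^{\theta(J)} w^J = 1$ from the proof of \leref{bijective J}(1). Since \leref{singleton imply exist}(2) identifies $W(\theta(J), \phi(U), \gamma^{\vee})$ as the singleton $\{(w^{\theta(J)}, \phi(\gamma^{\vee}))\}$, \prref{wv limit form} forces any holomorphic lift of $t_{w^{\theta(J)}} \otimes \phi(u)$ into the form $\tau_{w^{\theta(J)}}\left(\sum_i q_i \otimes \phi(u_i)\right)$ with $\phi(u_i)$ in the $\phi(\gamma^{\vee})$-weight space and $q_i \in \mathcal{O}(\theta(J))$ holomorphic at $\mathbf{a} = 0$. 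Expanding $\widetilde{\tau}_{w^{\theta(J)}} = t_{w^{\theta(J)}} \prod_{\alpha \in R(w^{\theta(J)})} \alpha + (\text{lower Bruhat})$, cancelling against the normalization $\prod \alpha^{-1}$, and matching the $t_{w^{\theta(J)}}$-coefficient at $\mathbf{a} = 0$ against $\phi(u)$ forces $\sum_i q_i(0) \phi(u_i) = \phi(u)$; this pins down $x_{\mathbf{a}}$ as $\tau_{w^{\theta(J)}} \otimes \phi(u)$, proving holomorphicity.

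For (2), \leref{basic comm lem} gives $v \tau_{w^{\theta(J)}} = \tau_{w^{\theta(J)}} (w^{\theta(J)})^{-1}(v) = \tau_{w^{\theta(J)}} \phi(v)$ for $v \in V$ (since $(w^{\theta(J)})^{-1} = w^J = \phi$ on $V$), which yields $v \cdot (\tau_{w^{\theta(J)}} \otimes \phi(u_{\mathbf{a}})) = \tau_{w^{\theta(J)}} \otimes \phi(v \cdot u_{\mathbf{a}})$. For $t_{s_{\alpha}}$ with $\alpha \in J$, a braid-type computation moves $t_{s_{\alpha}}$ past $\tau_{w^{\theta(J)}}$, producing $t_{s_{\theta(\alpha)}}$ on the $\phi(u_{\mathbf{a}})$-factor, which matches $\phi(t_{s_{\alpha}} u_{\mathbf{a}})$ under the twist by $\phi = \theta \circ \theta_J$. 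Thus $1 \otimes u_{\mathbf{a}} \mapsto \tau_{w^{\theta(J)}} \otimes \phi(u_{\mathbf{a}})$ is an $\mathbb{H}_J$-linear map, with inverse recovered from the leading $t_{w^{\theta(J)}}$-coefficient.

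Parts (3)--(5) then follow formally. Part (3) extends (2) via Frobenius reciprocity and checks bijectivity by a dimension count over $\mathbb{C}(\mathbf{a})$ combined with an induction on $W^{\theta(J)}$ showing that the image generates the target. Part (4) is the $\mathbf{a} = 0$ specialization of (2)--(3), justified by the holomorphicity established in (1). Part (5) extends (4) via Frobenius reciprocity to an $\mathbb{H}$-map $I(J, U) \to I(\theta(J), \phi(U))$, whose image is non-zero (it meets the $\gamma^{\vee}$-weight space) and has $L(J, U)$ as a composition factor by \leref{decompose for lang class}(2); for $(J, U) \in \Xi_L$, uniqueness of the simple quotient of $I(J, U)$ identifies the image with $L(J, U)$. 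The main obstacle will be (1), specifically matching the lift from \prref{wv limit form} with $\tau_{w^{\theta(J)}} \otimes \phi(u)$ itself rather than some other holomorphic modification --- this requires careful bookkeeping in the $\{t_w\}$- and $\{\tau_w\}$-bases of $\mathcal{H}^{\theta(J)}$ and control of leading coefficients at $\mathbf{a} = 0$.
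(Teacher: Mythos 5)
Your plan for (1) breaks at its first step, and (1) is the crux of the proposition. For a generalized weight vector $u$ of weight $\gamma^{\vee}$, the element $t_{w^{\theta(J)}}\otimes\phi(u)$ is in general \emph{not} a generalized weight vector of $\mathbb{H}\otimes_{\mathbb{H}_{\theta(J)}}\phi(U)$: commuting $v\in V$ across $t_{w^{\theta(J)}}$ produces, besides $t_{w^{\theta(J)}}\otimes\bigl((w^{\theta(J)})^{-1}(v)\bigr)\phi(u)$, lower Bruhat terms with coefficients involving $k_{\alpha}$; already for type $A_1$, $J=\emptyset$ and $U=\mathbb{C}_{\lambda^{\vee}}$ with $\lambda^{\vee}$ regular one checks $(v-\gamma^{\vee}(v))^N(t_{s_{\alpha}}\otimes u)\neq 0$ for all $N$. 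Hence Proposition \ref{prop wv limit form} cannot be applied to $t_{w^{\theta(J)}}\otimes\phi(u)$ itself; it must be applied, as the paper does, to a genuine generalized weight vector of the form $t_{w^{\theta(J)}}\otimes\phi(u_l)+\sum_{w\neq w^{\theta(J)}}t_w\otimes\phi(u_w)$, whose existence is a separate linear-algebra step you skip. Also, Proposition \ref{prop wv limit form} is an existence statement, not a constraint on ``any holomorphic lift''. Finally, even after these repairs, your concluding step is a non sequitur: matching the $t_{w^{\theta(J)}}$-coefficient at $\mathbf a=0$ only controls the specialization; from $x_{\mathbf a}=\sum_i b_i(\mathbf a)\,\bigl(\tau_{w^{\theta(J)}}\otimes\phi(u_i)\bigr)$ with $b_i$ holomorphic and $b_i(0)=\delta_{il}$ one cannot conclude that $\tau_{w^{\theta(J)}}\otimes\phi(u_l)$ is holomorphic (a coefficient vanishing at $0$ may multiply a pole). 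The correct argument runs the construction over a whole basis $u_1,\dots,u_k$ of the generalized $\gamma^{\vee}$-weight space, notes that the coefficient matrix $(b^{(l)}_i)$ is holomorphic with value the identity at $\mathbf a=0$ (using that $\tau_w$ has leading coefficient $1$ in the $t_w$-basis), and inverts it over germs holomorphic at $0$; this matrix-inversion step is exactly what the paper does and is absent from your proposal.

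There is a secondary gap in (5): a nonzero image is a quotient of $I(J,U)$ and hence surjects onto $L(J,U)$, but that does not make the image simple, so ``uniqueness of the simple quotient of $I(J,U)$'' alone does not identify the image with $L(J,U)$. One must use the structure of the target: the paper invokes the Barbasch--Moy $*$-dual to show $I(\theta(J),\phi(U))$ has a unique simple submodule isomorphic to $L(J,U)$ and that the intertwiner is the essentially unique nonzero map, so its image is that socle. Your parts (2)--(4) do follow the paper's route (Lemma \ref{lem basic comm lem} and Lemma \ref{lem various l case a}(2); note the simple reflection produced is $s_{\phi(\alpha)}$ with $\phi=\theta\circ\theta_J$, not $s_{\theta(\alpha)}$), while in (3) the paper avoids your generation-by-induction argument by observing that the composite with $\tau_{w^J}$ acts by an explicit invertible element of $\mathcal O(J)$.
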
 

\begin{proof}
Let $\gamma^{\vee}$ be a weight of $U$. Fix a set of generalized $\gamma^{\vee}$-weight vectors $\left\{ u_1, \ldots, u_k \right\}$ in $U$. It is not hard to show from linear algebra that there exists a generalized weight vector $x$ with the weight $w^{\theta(J)}(\phi(\gamma^{\vee}))=\gamma^{\vee}$ of the form 
\[   x = t_{w^{\theta(J)}} \otimes \phi(u_{l}) + \sum_{w \in W^{\theta(J)} \setminus \left\{ w^{\theta(J)} \right\}} t_w \otimes \phi(u_w) ,
\]
where $u_w \in U$. By Proposition \ref{prop wv limit form} and Lemma \ref{lem singleton imply exist}, there exists a holomorphic element $x_{\mathbf a} \in \mathcal{H}^J\otimes_{\mathcal{H}_J} U_{\mathbf a}$ of the form
\[ x_{\mathbf a} = \sum_{i=1}^k \tau_{w^{\theta(J)}}  \otimes b_i \phi(u_{i})
\]
such that $x_{\mathbf a}|_{\mathbf a=0}=x$. By considering the term $t_{w^{\theta(J)}} \otimes \phi(u_i)$, we have $q_i$ is holomorphic for each $i$. Furthermore $b_l(0,\ldots, 0)\neq 0$. Hence we also have $b_l^{-1}$ is holomorphic at $\mathbf a=0$. Thus $\tau_w \otimes u_i=\tau_w \otimes b_l^{-1} (b_l u_l)$ is holomorphic for each $l$. Then by linearity, we obtain (1).

We now consider (2). For notation simplicity, set $w=w^{\theta(J)}$ For any $v \in V$, we have $v \tau_{w} = \tau_{w} w^{-1}(v)$ (Lemma \ref{lem basic comm lem}). For any $\alpha \in J$, $s_{\alpha}w \notin W^J$ and $l(s_{\alpha}w)=l(w)+1$ (see the proof of Lemma \ref{lem bijective J}) and hence Lemma \ref{lem various l case a}(2) in Appendix B implies $t_{s_{\alpha}}\tau_{w} =\tau_{w} t_{s_{w^{-1}(\alpha)}}$. By $\phi^{-1}(w^{-1}(v))=v$, we can verify (2).

For (3), we see the induced map sends $\tau_{w^J} \otimes u$ to $q \otimes  u$ for some invertible $q \in \mathcal O(J) $ and $u \in U$. Explicitly,
\[ q = \prod_{\alpha \in R^+ \setminus R^+_J } \left( \frac{\alpha^2 - k_{\alpha}}{\alpha^2} \right).\]
Hence the induced map is invertible.

For (4), the map is well-defined by (1). It follows from (2) that the map is an $\mathbb{H}$-map.

The first assertion for (5) follows directly from (4) and Frobenius reciprocity. For the second assertion, in a paper of Barbasch-Moy \cite{BM2}, there is a notion of anti-involution $*$ on $\mathbb{H}$. (Here we shall use a linear version for $*$ rather than Hermitian-linear version.) Hence $*$ determines a dual module $I(\theta(J), \phi(U))^*$ and it is shown in  \cite[Corollary 1.3]{BM2} that $I(\theta(J), \phi(U))^* \cong I(\theta(J), \phi(U)^{*_{\theta(J)}})$. By a slight consideration on the weights of $\phi(U)^{*_{\theta(J)}}$, we have $(\theta(J),\phi(U)^{*_{\theta(J)}}) \in \Xi_L$. From the constructions in Langlands classification, $L(\theta(J), \theta(U))$ is the unique composition factor $X$ of $I(\theta(J), \theta(U))$ which $\mathrm{Res}_{\mathbb{H}_J}X$ contains the composition factor of $\phi(U)^{*_{\theta(J)}}$. Then $L(\theta(J), \phi(U)^{*_{\theta(J)}})^*$ is the unique composition factor $X$ of $I(\theta(J), \phi(U))$ which $\mathrm{Res}_{\mathbb{H}_{\theta(J)}}X$ contains the composition factor $\phi(U)$. Hence the map in (5) (which is non-zero) is determined (up to scalar). Hence the image of the map is the unique simple submodule of $I(\theta(J),\phi(U))$ and so has to be isomorphic to the unique simple quotient of $I(J,U)$. 
\end{proof}

Let $(J,U) \in \Xi^g_L$. Denote the isomorphism in Proposition \ref{prop intertwin properties}(3) by $\Delta_{\mathbf a}^U : \mathcal H^J \otimes_{\mathcal H_J} U_{\mathbf a} \rightarrow \mathcal H^{\theta(J)} \otimes_{\mathcal H_{\theta(J)}} \phi(U_{\mathbf a})$. Denote by $\Delta^U$ the map in (5). For simplicity, we shall drop the superscripts if there is no confusion.

\begin{definition} \label{def quotient gen}
For $(J,U) \in \Xi_L^g$, define $L(J,U)$ to be the image of $\Delta^U$. This coincides with the earlier notation of $L(J,U)$ for $(J,U) \in \Xi_L$ (up to an isomorphism) by Proposition \ref{prop intertwin properties}(5).
\end{definition}

\begin{remark}
In general, for $(J,U) \in \Xi_L^g \setminus \Xi_L$, the space of intertwining operators from $\mathbb{H} \otimes_{\mathbb{H}_J} U$ to $\mathbb{H} \otimes_{\mathbb{H}_{\theta(J)}}\phi(U)$ has dimension greater than $1$.
\end{remark}




\subsection{Intertwining operator along arbitrary directions} \label{ss intertwin direction}
We keep using notations in previous subsections. Let $\mathbf t$ be an indeterminate and let $\mathbb{C}(\mathbf t)$ be the rational function ring over $\mathbf t$. Let $(J,U) \in \Xi^g_L$.  Let $\eta^{\vee} \in V_J^{\vee}$ (possibly zero). Let $\mathbf t$ be an indeterminate. Let $U_{\mathbf t \eta^{\vee}}=\mathbb{C}(\mathbf t) \otimes_{\mathbb C}U$. We shall consider $U_{\mathbf t\eta^{\vee}}$ as an $\mathbb{H}_J$-module over $\mathbb{C}(\mathbf t)$ such that the action is given by
\[    \pi_{U_{\mathbf t\eta^{\vee}}}(t_w)(b\otimes u)= b \otimes \pi_{U}(t_w) u \quad \mbox{ for $w \in W$ }
\]
\[   \pi_{U_{\mathbf t\eta^{\vee}}}(v)(b\otimes u)= b \otimes \pi_{U}(v) u + \mathbf t\eta^{\vee}(v) b \otimes u \quad \mbox{ for $v\in V$ },
\]
where $b \in \mathbb{C}(\mathbf t)$. We define $\phi(U_{\mathbf t\eta^{\vee}})$ analogous to the notion of $\phi(U_{\mathbf a})$ in Definition \ref{def ua twist}. 

We consider an element $1 \otimes u \in \mathbb{H}\otimes_{\mathbb{H}_J} U_{\mathbf t \eta}$, where $u \in 1\otimes U \subset U_{\mathbf t \eta^{\vee}}$. The element $1 \otimes u$ is also naturally inside $1 \otimes U \subset U_{\mathbf a}$ and by Proposition \ref{prop intertwin properties}(1), $\tau_{w^{\theta(J)}} \otimes \phi(u) \in \phi(U_{\mathbf a})$  is an holomorphic element and so we can specialize $\tau_{w^{\theta(J)}} \otimes \phi(u) \in \mathcal H^{\theta(J)} \otimes_{\mathcal H_{\theta(J)}} \phi(U_{\mathbf a})$ at $\mathbf a=\mathbf t \eta^{\vee}$ for small $\mathbf t$. (Here the precise meaning of $\mathbf a=\mathbf t\eta^{\vee}$ is to specialize $\eta^{\vee}_{\mathbf a}=\mathbf t\eta^{\vee}$.) Now the element $\tau_{w^{\theta(J)}} \otimes u|_{\mathbf a=\mathbf t\eta^{\vee}}$ is in  $\mathbb{H}\otimes_{\mathbb{H}_{\theta(J)}}\phi(U_{\mathbf t\eta^{\vee}})$. We now define 
\begin{align}\label{eqn map intertwin}
\Delta_{\mathbf t\eta^{\vee}}^U: \mathbb{H} \otimes_{\mathbb{H}_J} U_{\mathbf t\eta^{\vee}} \rightarrow \mathbb{H} \otimes_{\mathbb{H}_{\theta(J)}} \phi(U_{\mathbf t \eta^{\vee}}) 
\end{align}
to be the $\mathbb{H}$-map extending 
\[ 1 \otimes u \mapsto (\tau_{w^{\theta(J)}} \otimes \phi(u))|_{\mathbf a=\mathbf t\eta^{\vee}} ,\]
where $u \in 1 \otimes U \subset U_{\mathbf t\eta^{\vee}}$. Again, we may simply write $\Delta_{\mathbf t\eta^{\vee}}$ or $\Delta_{\mathbf t}$ if there is no confusion.

\begin{remark}
For $u \in 1 \otimes U \subset U_{\mathbf t\eta^{\vee}}$, the notion $\tau_{w^{\theta(J)}} \otimes u$ is well-defined for generic $\eta^{\vee}$ but not for all $\eta^{\vee} \in V^{\vee}_J$. Hence we have to pass to $U_{\mathbf a}$ to construct the intertwining operator.
\end{remark}

\section{Jantzen filtrations} \label{s jan filt}

One may also compare the way to define a Jantzen filtration of this section with the paper \cite{BC}. Those two Jantzen filtrations (for the case of standard modules) coincide by the uniqueness of the intertwining operator. We shall not need this fact except in the computations of Example \ref{ex C3} which assume the truth of a version of Jantzen conjecture.

\subsection{Jantzen filtrations} \label{ss JF}

 The Jantzen filtration is defined through $\Delta_{\mathbf t\eta^{\vee}}$ in Section \ref{ss intertwin direction}. One may define a multivariable version of Jantzen filtration through $\Delta_{\mathbf a}$. However, if we hope to have some way to compute the Jantzen filtration from  geometric way (e.g. a conjecture in \cite{BC} using \cite{Lu2, Lu3}) or from Arakawa-Suzuki functor \cite{AS, Su} or results in \cite{Ro}, then perhaps a single variable is a better notion.

\begin{definition}(Jantzen filtration \cite{Ja}) 
Let $(J,U) \in \Xi_L^g$. Let $\eta^{\vee} \in   V_J^{\vee, \bot}$, which is possibly zero. Set $\mathbf a= \mathbf t \eta^{\vee}$. Let $\Omega(\mathbf{t}) \subset \mathbb{C}(\mathbf{t})$ be the set of all functions in $\mathbb{C}(\mathbf t)$ holomorphic at $\mathbf{t}=0$. We can define a Jantzen filtration as follows. For each integer $i$, let 
\[ U^i_{\mathbf{t}\eta^{\vee}}=  \mathbf{t}^i  \Omega(\mathbf{t}) \otimes U \subset U_{\mathbf t\eta^{\vee}}.\] 
 We shall regard $U^i_{\mathbf{t}\eta^{\vee}}$ as an $\mathbb{H}_J$-module.
Recall that $\phi=\theta \circ \theta_J$. Set $T^i(U_{\mathbf t\eta^{\vee}})=\mathbb{H}\otimes_{\mathbb{H}_{\theta(J)}}\phi(U_{\mathbf t \eta^{\vee}}^i)$ and  $T^i(\phi(U_{\mathbf t \eta^{\vee}}^i))=\mathbb{H}\otimes_{\mathbb{H}_{\theta(J)}}\phi(U_{\mathbf t \eta^{\vee}})$. From this point, we shall consider $T^i(U_{\mathbf t\eta^{\vee}})$ and $T^i(\phi(U_{\mathbf t \eta^{\vee}}^i))$ to be $\mathbb{H}$-modules over $\mathbb C$, but the $\mathbb{H}$-action is not significant sometimes and we may simply write $T^i(U)$ and $T^i(\phi(U))$ respectively to lighten notations.

 Via the specialization at $\mathbf t=0$, there is a natural identification
\begin{align} \label{eqn isom quotient}   
 T^0(U_{\mathbf t \eta^{\vee}})/ T^1(U_{\mathbf t \eta^{\vee}}) \stackrel{\sim}{\longrightarrow}\mathbb{H} \otimes_{\mathbb{H}_J} U  .
\end{align}
The Jantzen filtration is, roughly speaking, to get the information of the vanishing degree of elements in $\mathbb{H} \otimes_{\mathbb{H}_J}U$ under the intertwining operator.

By Proposition \ref{prop intertwin properties} (1), we have the map 
\[ \Delta_{\mathbf t\eta^{\vee}}^{i,U}=pr_i \circ \Delta^U_{\mathbf t\eta^{\vee}}: T^0(U) \rightarrow T^0(\phi(U))/T^i(\phi(U)),
\]
where $\Delta_{\mathbf t\eta^{\vee}}^{U}$ is the map in (\ref{eqn map intertwin}) restricted to the space $T^0(U_{\mathbf{t}\eta^{\vee}})$ and $pr_i$ is the natural projection map from $T^0(\phi(U))$ to $T^0(\phi(U))/T^i(\phi(U))$. We may drop the superscript $U$ and write $\Delta_{\mathbf t\eta^{\vee}}^i$ sometimes.


We then have the filtration of the form:
\[ F^i_{\eta^{\vee}}(J, U) = (\ker \Delta_{\mathbf t\eta^{\vee}}^{i} + T^1(U))/T^1(U) .
\]

By (\ref{eqn isom quotient}), this gives a filtration, denoted $\mathrm{JF}^i_{\eta^{\vee}}(J,U)$ or simply $\mathrm{JF}^i_{\eta^{\vee}}$, for the generalized standard module $\mathbb{H} \otimes_{\mathbb{H}_{J}} U$:
\[  \mathbb{H} \otimes_{\mathbb{H}_{J}}U =\mathrm{JF}^0_{\eta^{\vee}}(J,U) \supseteq \mathrm{JF}^1_{\eta^{\vee}}(J,U) \supseteq \mathrm{JF}^2_{\eta^{\vee}}(J,U) \supseteq \ldots 
\]
We shall call the filtration to be the {\it Jantzen filtration of $I(J,U)$ associated to $\eta^{\vee}$}. A more intuitive way to describe $\mathrm{JF}^i_{\eta^{\vee}}(J,U)$ via the identification (\ref{eqn isom quotient})  is as follows. The space $\mathrm{JF}^i_{\eta^{\vee}}(J,U)$ contains elements $x \in \mathbb{H} \otimes_{\mathbb{H}_J} U$ such that there exists an element $x_{\mathbf t}\in \mathbb{H} \otimes_{\mathbb{H}_J} U_{\mathbf t\eta^{\vee}}$ satisfying the conditions that $x_{\mathbf t}|_{\mathbf t=0}=x$ and $\Delta_{\mathbf t\eta^{\vee}}(x_{\mathbf t})$ has zero of order at least $i$. 

From the definitions, we have 
\[  \mathrm{JF}^0_{\eta^{\vee}}(J,U)/\mathrm{JF}^1_{\eta^{\vee}}(J,U) \cong L(J,U) \]
for any $\eta^{\vee}  \in V_J^{\vee, \bot}$.


Note that our definition allows $\eta^{\vee}$ to be in arbitrary direction, and possibly zero. In general, we do not have $\bigcap_i \mathrm{JF}^i_{\eta^{\vee}} = 0$. When $\eta^{\vee}=0$, we have $\mathrm{JF}^0_{\eta^{\vee}}(J,U)/\mathrm{JF}^i_{\eta^{\vee}}(J,U) \cong L(J,U)$ for all $i \geq 1$.
\end{definition}


\subsection{Linearly independence} \label{ss line inde lemma}
We keep using notations in the previous subsection. For each integer $i$ and each $x \in F^i(J,U_{\mathbf t \eta^{\vee}}) \setminus F^{i+1}(J,U_{\mathbf t \eta^{\vee}})$, let $x_{\mathbf t} \in \ker \Delta_{\mathbf t\eta^{\vee}}^{i}$ be a representative of $x$. In particular, $\Delta_{\mathbf t\eta^{\vee}}^{i}(x_{\mathbf t})$ has zeros of order $i$, i.e.
\[ \frac{1}{\mathbf t^i}\Delta_{\mathbf t\eta^{\vee}}(x_{\mathbf t}) \in T^0(\phi(U )) \quad \mbox{ and } \quad \frac{1}{\mathbf t^{i+1}}\Delta_{\mathbf t\eta^{\vee}}(x_{\mathbf t}) \notin T^0(\phi(U ))  \]

For each $i$, fix the representatives $x^{i,1}_{\mathbf t}, \ldots, x^{i,d_i}_{\mathbf t} \in T^0(U)$ for $F^i(J,U_{\mathbf t \eta^{\vee}}) \setminus F^{i+1}(J,U_{\mathbf t \eta^{\vee}})$ such that the projections of $x^{i,1}_{\mathbf t}, \ldots, x^{i,d_i}_{\mathbf t}$ form a basis for $ F^i(J,U_{\mathbf t \eta^{\vee}}) / F^{i+1}(J,U_{\mathbf t \eta^{\vee}})$. Let $\left\{ x^{\infty,k}_{\mathbf t} \right\}_{k=1,\ldots d_{\infty}} \subset T^0(U)$ whose projection in $(\ker \Delta_{\mathbf t\eta^{\vee}}+T^1(U))/T^1(U)$ forms a basis. By definition $\Delta_{\mathbf t \eta^{\vee}}^{i}(x_{\mathbf t}^{\infty,k})=0$ for all $i$. 

Let $X^i_{\mathbf t}(J, U)$ (or simply $X^i_{\mathbf t}$) be the space spanned by $x^{i,1}_{\mathbf t}, \ldots, x^{i,d_i}_{\mathbf t}$. Let $X_{\mathbf t}$ be the space spanned by all $X^i_{\mathbf t}$ (which is not an $\mathbb{H}$-module in general). From definitions, we have 
\[ X_{\mathbf t} \cap T^1(U) =0 .\]

We now prove a statement of linearly independence.
\begin{proposition} \label{prop strong linearly indpende}
The set
 \[\left\{ \left. \frac{1}{\mathbf t^i}\Delta_{\mathbf t\eta^{\vee}} (x_{\mathbf t}^{i, 1})\right|_{\mathbf t=0},\ldots,\left. \frac{1}{\mathbf t^i} \Delta_{\mathbf t\eta^{\vee}}(x^{i,d_i}_{\mathbf t})\right|_{\mathbf t=0} \right\}_{i\in \mathbb{Z}  } \]
 is linearly independent. Equivalently,
 \[ \mathrm{span}\left\{ \frac{1}{\mathbf t^i}\Delta_{\mathbf t\eta^{\vee}} (x_{\mathbf t}^{i, 1}),\ldots, \frac{1}{\mathbf t^i} \Delta_{\mathbf t\eta^{\vee}}(x^{i,d_i}_{\mathbf t}) \right\}_{i\in \mathbb{Z}  } \cap T^1(\phi(U))=0 \]
\end{proposition}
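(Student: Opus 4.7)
I would prove the equivalent form: assume a nontrivial relation
\[
 \sum_{i,j} c_{i,j}\, \frac{1}{\mathbf{t}^i}\Delta_{\mathbf{t}\eta^{\vee}}(x^{i,j}_{\mathbf{t}}) \;\in\; T^1(\phi(U)),
\]
and derive a contradiction by induction on the largest $i$ for which some $c_{i,j}$ is nonzero. Let $i_{\max}$ be that index. The plan is to exhibit an element of $T^0(U)$ whose reduction to $F^{i_{\max}}/F^{i_{\max}+1}$ is forced to vanish, contradicting the choice of the $x^{i_{\max},j}_{\mathbf{t}}$ as lifts of a basis.

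Concretely, I would set
\[
 y_{\mathbf{t}} \;:=\; \sum_{i \leq i_{\max},\, j} c_{i,j}\, \mathbf{t}^{i_{\max}-i}\, x^{i,j}_{\mathbf{t}} \;\in\; T^0(U),
\]
which is well-defined since the exponents $i_{\max}-i$ are nonnegative. By the choice of representatives, $\Delta_{\mathbf{t}\eta^{\vee}}(x^{i,j}_{\mathbf{t}})$ has vanishing order exactly $i$ at $\mathbf{t}=0$, so each summand $\mathbf{t}^{i_{\max}-i}\Delta_{\mathbf{t}\eta^{\vee}}(x^{i,j}_{\mathbf{t}})$ has vanishing order at least $i_{\max}$; hence $\Delta_{\mathbf{t}\eta^{\vee}}(y_{\mathbf{t}}) \in T^{i_{\max}}(\phi(U))$. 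Next, the equality
\[
 \frac{1}{\mathbf{t}^{i_{\max}}}\Delta_{\mathbf{t}\eta^{\vee}}(y_{\mathbf{t}}) \;=\; \sum_{i \leq i_{\max},\, j} c_{i,j}\, \frac{1}{\mathbf{t}^i}\Delta_{\mathbf{t}\eta^{\vee}}(x^{i,j}_{\mathbf{t}})
\]
combined with the assumed membership in $T^1(\phi(U))$ shows $\Delta_{\mathbf{t}\eta^{\vee}}(y_{\mathbf{t}}) \in T^{i_{\max}+1}(\phi(U))$, i.e.\ $y_{\mathbf{t}} \in \ker \Delta^{i_{\max}+1}_{\mathbf{t}\eta^{\vee}}$. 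Therefore $y_{\mathbf{t}}|_{\mathbf{t}=0}$ represents a class in $F^{i_{\max}+1}_{\eta^{\vee}}(J,U)$.

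On the other hand, only the terms with $i = i_{\max}$ survive the specialization at $\mathbf{t}=0$, giving
\[
 y_{\mathbf{t}}|_{\mathbf{t}=0} \;=\; \sum_{j} c_{i_{\max}, j}\, x^{i_{\max}, j}_{\mathbf{t}}|_{\mathbf{t}=0}.
\]
By construction, the classes of $x^{i_{\max}, j}_{\mathbf{t}}|_{\mathbf{t}=0}$ form a basis of $F^{i_{\max}}_{\eta^{\vee}}/F^{i_{\max}+1}_{\eta^{\vee}}$, so the above element projects nontrivially to that quotient whenever some $c_{i_{\max},j}\neq 0$. This contradicts $y_{\mathbf{t}}|_{\mathbf{t}=0} \in F^{i_{\max}+1}_{\eta^{\vee}}$, forcing all $c_{i_{\max},j}=0$ and completing the inductive step.

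The argument is essentially bookkeeping: the only care needed is in tracking the vanishing orders in the two modules $T^0(U)$ and $T^0(\phi(U))$ simultaneously. The potential obstacle I anticipate is making sure the representative $x^{i,j}_{\mathbf{t}}$ really has $\Delta_{\mathbf{t}\eta^{\vee}}(x^{i,j}_{\mathbf{t}})$ of vanishing order exactly $i$ (not more), which follows from the fact that being able to raise the order by altering the representative inside $x^{i,j}_{\mathbf{t}} + T^1(U)$ would imply $x^{i,j}_{\mathbf{t}}|_{\mathbf{t}=0} \in F^{i+1}_{\eta^{\vee}}$, contrary to the way the basis of $F^i_{\eta^{\vee}}/F^{i+1}_{\eta^{\vee}}$ was chosen. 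Once that observation is recorded, the rest of the proof is the single-variable linear algebra sketched above.
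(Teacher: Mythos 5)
Your proof is correct and takes essentially the same route as the paper: choose the largest index $i_{\max}$ with a nonzero coefficient, use the kernel description $F^{i}_{\eta^{\vee}}=(\ker\Delta^{i}_{\mathbf t\eta^{\vee}}+T^1(U))/T^1(U)$ to place the surviving combination of the $x^{i_{\max},j}_{\mathbf t}|_{\mathbf t=0}$ in $F^{i_{\max}+1}_{\eta^{\vee}}$, and contradict the choice of basis of $F^{i_{\max}}_{\eta^{\vee}}/F^{i_{\max}+1}_{\eta^{\vee}}$. Your auxiliary element $y_{\mathbf t}=\sum_{i\le i_{\max},j}c_{i,j}\mathbf t^{i_{\max}-i}x^{i,j}_{\mathbf t}$ just makes explicit the grouping of lower-order terms that the paper's terse intermediate display leaves implicit, so the writeup is, if anything, slightly more complete than the original.
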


\begin{proof}
We consider a linear equation of the following form
\[ \sum_{i \in \mathbb{Z}} \frac{1}{\mathbf t^i} \sum_{k=1}^{d_i} a_{i,k}  \Delta_{\mathbf t\eta^{\vee}} (x_{\mathbf t}^{i, k})  =y ,
\]
where $y \in T^1(\phi(U_{\mathbf{t}\eta^{\vee}}))$. Let $i'$ be the greatest integer such that $a_{i',k} \neq 0$ for some $k=1, \ldots, d_{i'}$. Suppose such $i'$ exists. 

\[   \sum_{k=1}^{d_{i'}} a_{i',k} \Delta_{\mathbf t\eta^{\vee}} (x_{\mathbf t}^{i', k})  \in  T^{i'+1}(\phi(U)) \]
Therefore $ \sum_{k=1}^{d_{i'}} a_{i',k}\Delta^{i'+1}_{\mathbf t\eta^{\vee}}(x_{i',k}) =0 $. Hence $\sum_{k=1}^{d_{i'}} a_{i',k}x_{i',k} \in F^{i'+1}(J,U)$ and this gives a contradiction to our choice of $x_{i',k}$.
\end{proof}

\subsection{Comparing filtrations} \label{ss remark filt}

In general, the Jantzen filtration for a standard module does not coincide with the radical filtration or socle filtration. This gives a discrepancy from the picture of real groups \cite[Corollary 5.3.2]{BB} (also see \cite{Ba}, \cite{Ir} for Verma module cases). We shall give an example for type $B_2$. 

We recall some definitions. The radical of an $\mathbb{H}$-module $X$, denoted $\mathrm{rad}(X)$, is the minimal submodule of $X$ such that the quotient is semisimple. This defines the radical filtration for $X$:
\[ X \supset \mathrm{rad}^1(X) \supset \mathrm{rad}^2(X) \supset \ldots \supset 0 \]
The socle, denoted $\mathrm{soc}(X)$, of an $\mathbb{H}$-module $X$ is the maximal semisimple module of $X$. Define inductively $\mathrm{soc}^{i+1}(X)$ by $\mathrm{soc}^{i+1}(X)/\mathrm{soc}^i(X) \cong \mathrm{soc}(X/\mathrm{soc}^i(X))$. This gives the socle filtration for $X$:
\[ 0\subset \mathrm{soc}^1(X) \subset \mathrm{soc}^2(X) \subset \ldots \subset X 
\]

\begin{example}
We consider the case of type $B_2$ with $k_{\alpha}=k_{\beta}=2$. Denote the simple roots $\alpha$ and $\beta$ such that 
\[\alpha^{\vee}(\alpha)=\beta^{\vee}(\beta)=-\alpha^{\vee}(\beta)=2, \quad \beta^{\vee}(\alpha)=-1 . 
\]
We consider the central character $\gamma^{\vee}=\alpha^{\vee}$. There exists two non-isomorphic irreducible tempered module of the central character $\gamma^{\vee}$. Denote by $T_0$ the tempered module which contains a sign representation as $W$-representation and denote by $T_1$ the tempered module which is $1$-dimensional. There is another $1$-dimensional irreducible $\mathbb{H}$-module with the central character $\gamma^{\vee}$. Denote the module by $Z$. The weight space of $Z$ is $\alpha^{\vee}$.

Using the projective resolution in \cite[Section 3]{Ch2}, simple computations for $\mathrm{Hom}$-space of $W$-representations give
\[\dim \mathrm{Ext}_{\mathbb{H}}^i(Z, Z)= \dim \mathrm{Ext}_{\mathbb{H}}^i(T_1, T_1)=\left\{ \begin{array} {cc} 
                                           1   &   \mbox{ for $i=0$ }  \\
																					 0   &   \mbox{ for $i \neq 0$ }
																			\end{array} \right.
\]  
\begin{align}\label{eqn compute 1} \dim \mathrm{Ext}_{\mathbb{H}}^i(A , B)=\left\{ \begin{array} {cc} 
                                           1   &   \mbox{ for $i=1$ }  \\
																					 0   &   \mbox{ for $i \neq 1$ }
																			\end{array} \right. ,
\end{align}
where $(A, B)$ can be one of the following pairs: $(T_1, Z), (Z, T_1), (T_1, T_0), (T_0, T_1), (Y, Z), (Z, Y)$. 

To compute $\mathrm{Ext}$-groups for the pairs $(T_0, T_0)$ and $(Y, Y)$, using the projective resolution of \cite[Section 3]{Ch2} requires some further structural information for $T_0$ and $Y$. Instead of computing such information, we apply the duality result \cite[Theorem 4.15]{Ch2} and then obtain
\[ \dim\mathrm{Ext}^2_{\mathbb{H}}(Y, Y)=\dim\mathrm{Hom}_{\mathbb{H}}(T_0, Y)= 0 .\]
Then using the $W$-structure, we obtain the Euler-Poincare pairing $\mathrm{EP}(T_0,T_0)=\mathrm{EP}(Y, Y)=1$. Combining with the fact that the global dimension of $\mathbb{H}$ is $2$, we can deduce that
\begin{align} \label{eqn compute 2} \mathrm{Ext}^i_{\mathbb{H}}(Y, Y)=\mathrm{Ext}^i_{\mathbb{H}}(T_0,T_0)=\left\{ \begin{array} {cc} 
                                           1   &   \mbox{ for $i=0$ }  \\
																					 0   &   \mbox{ for $i \neq 0$ }
																			\end{array} \right.
\end{align}

Let $J=\left\{ \alpha \right\}$ and let $U$ be the irreducible (2-dimensional) tempered $\mathbb{H}_J$-module with the weight $0$. Let $\nu^{\vee}=\alpha^{\vee}+2\beta^{\vee} \in V_J^{\vee,\bot}$. We consider the standard module $X=\mathbb{H} \otimes_{\mathbb{H}_{J}} (U\otimes \mathbb{C}_{\nu^{\vee}})$, where $\mathbb{C}_{\nu^{\vee}}$ is $1$-dimensional $S(V_J^{\bot})$-module with the weight $\nu^{\vee}$. By a slight consideration on the weight space of $X$, one can deduce that $T_0$, $T_1$, $Y$ and $Z$ have multiplicity one in the composition series of $X$. 

The simple quotient of $X$ is $Y$ and hence we have the following short exact sequence:
\[   0 \rightarrow N \rightarrow X \rightarrow Y \rightarrow 0,
\]
where $N$ is the maximal proper submodule of $X$. 

In order to determine the radical $\mathrm{rad}(N)$ of $N$, we have to compute $\mathrm{Hom}_{\mathbb{H}}(N, C)$, where $C=Z, T_0, T_1$. By using Proposition \ref{prop ext lower langlands para} and the associated long exact sequence of the functor $\mathrm{Hom}_{\mathbb{H}}(.,C)$, we have
\[ \mathrm{Hom}_{\mathbb{H}}(N, C)\cong \mathrm{Ext}^1_{\mathbb{H}}(Y, C) . \]
and so the $\mathrm{Hom}$-space can be computed from (\ref{eqn compute 1}) and (\ref{eqn compute 2}). This implies there exists a surjective map from $N$ to $Z \oplus T_1$ and $\mathrm{rad}(N)=T_0$. In summary, the radical filtration of $X$ is given by: $\mathrm{rad}^0(X)/\mathrm{rad}^1(X)=Y$,  $\mathrm{rad}^1(X)/\mathrm{rad}^2(X)\cong T_1 \oplus Z$,  $\mathrm{rad}^2(X)/\mathrm{rad}^3(X)\cong T_0$.

On the other hand, the Jantzen filtration of $X$ can be computed directly (see Appendix A for the details, also {\it c.f.} \cite[Section 4.3]{Ci}, \cite[Section 6.7]{BC} ):
\[ \mathrm{JF}^0_{\nu^{\vee}}/\mathrm{JF}^1_{\nu^{\vee}} \cong Y,\ \mathrm{JF}^1_{\nu^{\vee}}/ \mathrm{JF}^2_{\nu^{\vee}} \cong T_1 
\]
\[ \mathrm{JF}^2_{\nu^{\vee}}/\mathrm{JF}^3_{\nu^{\vee}} \cong Z,\ \mathrm{JF}^3_{\nu^{\vee}}/ \mathrm{JF}^4_{\nu^{\vee}} \cong T_0 
\]

For the socle filtration, we apply the $\bullet$ anti-involution. By a weight consideration, we have $X^{\bullet}\cong IM(X)$, where $IM$ is the Iwahori-Matsumoto involution (see \cite{Ev} for the definition of $IM$). In particular, $IM(T_0)=Y$ and $IM(T_1)=Z$. The radical filtration of $X^{\bullet} \cong IM(X)$ and Lemma \ref{lem bullet dual ismo} now determine the socle filtration of $X$. In particular, $T_1$ is a tempered module which does not appear as a submodule in the standard module $X$.

\end{example}

\section{The quotient $L(J,U)$ of generalized standard modules} \label{s quotient gsm}

Recall that $S(V)$-extensions are defined in Definition \ref{def sv extension}. In this section, we deal with generalized standard modules of strict $S(V)$-type. Since those extensions come from extensions of representations of a polynomial ring, we can write down the structure of corresponding modules in a fairly explicit way.

However, in order to describe the structure in terms of composition factors, we need some more work and make use of Jantzen filtrations. 

We shall make use of notations in Sections \ref{s intertwin operat} and \ref{s jan filt} (e.g. $U_{\mathbf a}$, $\mathcal H_J$, $\Delta$, etc).
\subsection{Realization of $S(V)$-extensions}  \label{ss realization sv ext}

Let $(J,U) \in \Xi_L$. (Some constructions are also valid for $(J,U) \in \Xi_L^g$, but those are not our main concern.) Then $U=\overline{U} \otimes L$ for some $\mathbb{H}_J^{ss}$-module $\overline{U} $ and some one dimensional $S(V_J^{\bot})$-module $L$. From the discussions on Section \ref{s gss ext}, we can naturally construct Yoneda first extensions between two $I(J,U)$ through the Yoneda first extensions between two $U$. Using discussions in Section \ref{s gss ext}, we can and shall identify the following:
\[ \mathrm{Ext}^1_{\mathbb{H}}(I(J,U), I(J,U)) \cong \mathrm{Ext}^1_{\mathbb{H}}(U, U) \cong \mathrm{Ext}^1_{\mathbb{H}_J^{ss}}(\overline{U}, \overline{U})\oplus \mathrm{Ext}^1_{S(V_J^{\bot})}(L, L). 
\]

We identify $V_J^{\vee, \bot}$ with $\mathrm{Ext}^1_{\mathrm{S(V_J^{\bot})}}(L, L)$. Let $\eta^{\vee} \in V_J^{\bot} =\mathrm{Ext}^1_{S(V_J^{\bot})}(L, L)$. The element $\eta^{\vee}$ gives rise an indecomposable $r \times\dim U$-dimensional representation, denoted $U^{r, \eta^{\vee}}$ with the explicit structure described as follow. As vector spaces, we identify 
\begin{align} \label{eqn isom r copies}
U^{r, \eta^{\vee}} \cong U^{\oplus r}= U \oplus \ldots \oplus U,
\end{align}
 where $U$ appears $r$ times in the right hand side. The $\mathbb{H}_J$-action is given by:
\begin{align} \label{eqn action gst 1}  \pi_{U^{r, \eta^{\vee}}}(t_{w}) (u_1, \ldots, u_r) = (\pi_{U}(t_{w})u_1, \ldots, \pi_{U}(t_{w})u_r) \quad (w \in W),
\end{align}
\begin{align} \label{eqn action gst 2}  \pi_{U^{r, \eta^{\vee}}}(v)(u_1, \ldots, u_r)=(\pi_{U}(v)u_1,\ldots, \pi_U(v)u_r)+\eta^{\vee}(v)(0, u_1 \ldots, u_{r-1}) \quad (v \in V) .
\end{align}
By definitions, $(J, U^{r, \eta^{\vee}}) \in \Xi^g_L$. 

\begin{lemma} \label{lem bullet dual}
Let $(J,U) \in \Xi_L$. Then $I(J,U^{r,\eta^{\vee}})^{\bullet} \cong I(\theta(J), \phi(U^{r,\eta^{\vee}}))$. 
\end{lemma}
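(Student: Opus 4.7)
My plan is to construct an explicit $\mathbb{H}$-isomorphism $\Psi \colon I(\theta(J), \phi(U^{r,\eta^{\vee}})) \to I(J, U^{r,\eta^{\vee}})^{\bullet}$ and then verify bijectivity through matching weight multiplicities. First I would analyze the weight structures on both sides. Because the anti-involution $\bullet$ fixes $V \subset \mathbb{H}$ pointwise, the generalized weight space decomposition of $I(J, U^{r,\eta^{\vee}})^{\bullet}$ is the linear dual of that of $I(J, U^{r,\eta^{\vee}})$, with identical weights and multiplicities. Combining the PBW decomposition $\mathbb{H} = \bigoplus_{w \in W^J} t_w \mathbb{H}_J$ with the bijection $W^{\theta(J)} \to W^J$, $w \mapsto w w^J$ of Lemma \ref{lem bijective J}, together with the fact that $\phi(U^{r,\eta^{\vee}})$ has the same underlying vector space as $U^{r,\eta^{\vee}}$ with weights translated by $w^J$, one checks that $I(\theta(J), \phi(U^{r,\eta^{\vee}}))$ has the same dimension and the same weight multiplicities.

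Next, the construction of $\Psi$ would proceed by Frobenius reciprocity: the task reduces to providing an $\mathbb{H}_{\theta(J)}$-equivariant embedding $\phi(U^{r,\eta^{\vee}}) \hookrightarrow \mathrm{Res}_{\mathbb{H}_{\theta(J)}} I(J, U^{r,\eta^{\vee}})^{\bullet}$. (Note that $\bullet$ preserves $\mathbb{H}_{\theta(J)}$ since it fixes $V$ and sends $t_w$ to $t_{w^{-1}}$.) My candidate is the map that sends $\phi(u) \in \phi(U^{r,\eta^{\vee}})$ to the linear functional on $I(J, U^{r,\eta^{\vee}})$ extracting the $u$-coefficient of the $\tau_{w^J}$-term in the $\tau$-expansion furnished by Proposition \ref{prop intertwin basis}. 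Equivariance under the $S(V)$-part of $\mathbb{H}_{\theta(J)}$ follows from the commutation formula $v \tau_{w^J} = \tau_{w^J} (w^J)^{-1}(v)$ of Lemma \ref{lem basic comm lem}, while equivariance under $t_{s_{\alpha}}$ for $\alpha \in \theta(J)$ follows from the same computation $t_{s_{\alpha}} \tau_{w^{\theta(J)}} = \tau_{w^{\theta(J)}} t_{s_{(w^{\theta(J)})^{-1}(\alpha)}}$ that appears in the proof of Proposition \ref{prop intertwin properties}(2), now applied on the dual side.

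Finally, I would conclude that $\Psi$ is an isomorphism. The map $\Psi$ is nonzero by construction. Both source and target are indecomposable: $I(\theta(J), \phi(U^{r,\eta^{\vee}}))$ by Proposition \ref{prop indecomp standard} (once one verifies $(\theta(J), \phi(U^{r,\eta^{\vee}})) \in \Xi_L^g$), and $I(J, U^{r,\eta^{\vee}})^{\bullet}$ because indecomposability is preserved by $\bullet$-duality. Combined with the equality of dimensions from the first step, this forces $\Psi$ to be an isomorphism. The main obstacle will be carrying out the second step honestly: verifying that $\phi(U^{r,\eta^{\vee}})$ is $\mathbb{H}_{\theta(J)}$-tempered and that the map $\Psi$ respects the non-semisimple $S(V_J^{\bot})$-structure of $U^{r,\eta^{\vee}}$ induced by $\eta^{\vee}$. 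The Jordan-block structure of $\pi_{U^{r,\eta^{\vee}}}(v)$ for $v \in V_J^{\bot}$ must be transported correctly through $\phi$ and the conjugation by $\tau_{w^{\theta(J)}}$; tracking this bookkeeping along the filtration $0 \subset U^{1,\eta^{\vee}} \subset \cdots \subset U^{r,\eta^{\vee}}$ is the technically delicate part.
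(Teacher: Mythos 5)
Your strategy is a direct construction, which is quite different from the paper's argument (the paper deduces the lemma from $X^{\bullet}\cong\theta(X^{*})$, the Barbasch--Moy result that the $*$-dual of a parabolically induced module is again induced from the $*_J$-dual, and the identification $U^{*_J}\cong\theta_J(U)$ via weights and Evens--Mirkovi\'c, extended to $U^{r,\eta^{\vee}}$ using its strict $S(V)$-structure). A direct proof is not impossible in principle, but as written yours has two genuine gaps.

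First, the candidate embedding is not well defined. Proposition \ref{prop intertwin basis} is only an existence statement for a $\tau$-expansion of a chosen holomorphic lift in $\mathcal H^J\otimes_{\mathcal H_J}U_{\mathbf a}$; the paper explicitly warns (remark after Proposition \ref{prop intertwin basis}) that such expressions are not unique after specialization --- e.g.\ in type $A_1$ one has $(\tau_{s_{\alpha}}\alpha+k_{\alpha})\otimes u|_{\mathbf a=0}=0$ although the $\tau_{s_\alpha}$-coefficient is nonzero generically. So ``the $u$-coefficient of the $\tau_{w^J}$-term'' of an element of $I(J,U^{r,\eta^{\vee}})$ depends on the choice of lift (and the coefficients involve denominators from the normalization of $\tau_w$ that need not specialize), hence it does not define a linear functional on $I(J,U^{r,\eta^{\vee}})$. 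If you want this route you must work with the honest decomposition $\mathbb{H}\otimes_{\mathbb{H}_J}U=\bigoplus_{w\in W^J}t_w\otimes U$ and carry out the (nontrivial) equivariance computation there; this is essentially re-proving the Barbasch--Moy statement the paper cites.

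Second, the concluding step is invalid. A nonzero map between indecomposable modules of the same dimension need not be an isomorphism: the paper's own operator $\Delta^U\colon I(J,U)\to I(\theta(J),\phi(U))$ from Proposition \ref{prop intertwin properties}(5) is exactly such a map, and its image is $L(J,U)$, in general a proper subquotient. Moreover the verification you defer, that $(\theta(J),\phi(U^{r,\eta^{\vee}}))\in\Xi_L^g$, fails: the paper remarks just before Proposition \ref{prop intertwin properties} that $(\theta(J),\phi(U))$ is in general not in $\Xi_L^g$ (its weights violate the temperedness inequality (\ref{eqn temp}), the $\nu$-part being anti-dominant), so Proposition \ref{prop indecomp standard} cannot be quoted for the source. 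To make a direct proof work you would instead have to show your map restricts to a perfect pairing (equivalently, is injective, or hits a generating subspace of the dual), not argue via indecomposability and dimension count; alternatively, follow the paper and reduce to the $*$-operation, where the compatibility with induction and the identification of $(U^{r,\eta^{\vee}})^{*_J}$ with $\theta_J(U^{r,\eta^{\vee}})$ (using the strict $S(V)$-extension structure) do the work.
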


\begin{proof}
Note that $I(J,U^{r,\eta^{\vee}})^{\bullet}\cong \theta(I(J,U^{r,\eta^{\vee}})^{*})$ (see e.g. \cite[Lemma 4.5]{Ch2}). Here $*$ is a linear anti-involution defined similarly as in \cite[Section 1]{BM2}. Then by \cite[Corollary 1.3]{BM2}, $I(J,U^{r,\eta^{\vee}})^{*}\cong I(\theta(J), (U^{r, \eta^{\vee}})^{*_J})$. Now by considering weights of $U^{*_J}$ and using \cite[Theorem 5.5]{EM}, we have $ U^{*_J} \cong \theta_J(U)$. Then $(U^{r, \eta^{\vee}})^{*_J} \cong \theta_J(U^{r,\eta^{\vee}})$ (which we also need to use the strict $S(V)$-extensions for $U^{r,\eta^{\vee}}$). Hence $I(J,U^{r,\eta^{\vee}})^{\bullet}\cong I(\theta(J),\phi(U^{r,\eta^{\vee}}))$.
\end{proof}

Note that $I(J,U^{r,\eta^{\vee}})$ can be naturally identified with $T^0(U)/T^r(U)$ via the map
\[  t_w \otimes (u_1, \ldots, u_r) \mapsto \sum_{i=1}^r t_w \otimes \mathbf t^{i-1}u_i ,\]
where $u_i$ on the left-hand side is also regarded as an element in $1 \otimes U \subset U_{\mathbf t\eta^{\vee}}$. Similarly, we can identify $I(\theta(J),\phi(U^{r,\eta^{\vee}}))$ with $T^0(\phi(U))/T^r(\phi(U))$. 

Since $\Delta^U_{\mathbf t\eta^{\vee}}(T^r(U)) \subset T^r(\phi(U))$, the map $\Delta_{\mathbf t\eta^{\vee}}$ induces a map, denoted 
\[ \widetilde{\Delta}^{U^{r,\eta^{\vee}}}: T^0(U)/T^r(U) \rightarrow T^0(\phi(U))/T^r(\phi(U)) .\]

\begin{lemma}\label{lem image of delta}
$ \mathrm{im} \widetilde{\Delta}^{U^{r,\eta^{\vee}}} \cong \mathrm{im} \Delta^{U^{r,\eta^{\vee}}} $.
\end{lemma}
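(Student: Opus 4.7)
The plan is to show that under the natural identifications
\[ I(J, U^{r,\eta^{\vee}}) \cong T^0(U)/T^r(U), \qquad I(\theta(J), \phi(U^{r,\eta^{\vee}})) \cong T^0(\phi(U))/T^r(\phi(U)), \]
the two maps $\Delta^{U^{r,\eta^{\vee}}}$ and $\widetilde{\Delta}^{U^{r,\eta^{\vee}}}$ literally coincide, which obviously implies the isomorphism of images claimed in the lemma. The second identification is obtained from the first by applying the same construction to $(\theta(J), \phi(U^{r,\eta^{\vee}}))$; it rests on the routine observation from Definition \ref{def delta map 1} and the formulas (\ref{eqn action gst 1})--(\ref{eqn action gst 2}) that $\phi(U^{r,\eta^{\vee}})$ is canonically isomorphic as an $\mathbb H_{\theta(J)}$-module to the strict $S(V)$-type module $(\phi(U))^{r,\phi(\eta^{\vee})}$ built from $\phi(U)$ and $\phi(\eta^{\vee}) \in V_{\theta(J)}^{\vee,\bot}$.

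Both maps are $\mathbb H$-module homomorphisms between the same source and target, so it suffices to verify equality on the generating subspace $1\otimes U^{r,\eta^{\vee}}$. On $1\otimes(u_1,\ldots,u_r)$ the map $\widetilde{\Delta}^{U^{r,\eta^{\vee}}}$ is, by construction, computed by applying $\Delta_{\mathbf t\eta^{\vee}}$ to the representative $\sum_{i=1}^r 1 \otimes \mathbf t^{i-1}u_i$ and reducing modulo $T^r(\phi(U))$, which by (\ref{eqn map intertwin}) gives
\[ \sum_{i=1}^r (\tau_{w^{\theta(J)}} \otimes \phi(\mathbf t^{i-1} u_i))\big|_{\mathbf a = \mathbf t\eta^{\vee}} \pmod{T^r(\phi(U))}. \]
On the other hand, $\Delta^{U^{r,\eta^{\vee}}}(1\otimes(u_1,\ldots,u_r))$ is the specialization at $\mathbf a = 0$ of $\tau_{w^{\theta(J)}} \otimes \phi((u_1,\ldots,u_r))$ computed inside $\mathcal H^{\theta(J)} \otimes_{\mathcal H_{\theta(J)}}\phi(U^{r,\eta^{\vee}}_{\mathbf a})$. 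I would match the two outputs by constructing an $\mathcal H_{\theta(J)}$-morphism
\[ \phi(U^{r,\eta^{\vee}}_{\mathbf a}) \longrightarrow \phi(U_{\mathbf a+\mathbf t\eta^{\vee}}^{\circ})/\mathbf t^r \phi(U_{\mathbf a+\mathbf t\eta^{\vee}}^{\circ}), \quad (u_1,\ldots,u_r)\mapsto \sum_{i=1}^r \mathbf t^{i-1}u_i, \]
under which the action of $\tau_{w^{\theta(J)}}$ is intertwined; specializing $\mathbf a = 0$ on the source side then matches the reduction modulo $T^r(\phi(U))$ on the target side.

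The main obstacle is the explicit verification that the morphism above intertwines the action of $\tau_{w^{\theta(J)}}$. This comes down to the commutation rule $v\tau_{w^{\theta(J)}}= \tau_{w^{\theta(J)}}(w^{\theta(J)})^{-1}(v)$ from Lemma \ref{lem basic comm lem} together with the fact that $W_J$ acts trivially on $V_J^{\vee,\bot}$, so the $\eta^{\vee}$-nilpotent shift in $U^{r,\eta^{\vee}}$ survives unchanged when conjugated through $\tau_{w^{\theta(J)}}$. Tracking the two deformation parameters $\mathbf a$ and $\mathbf t\eta^{\vee}$ simultaneously and checking that the specialization and the quotient by $\mathbf t^r$ commute in the required sense is notationally heavy but requires no new ideas beyond the intertwining formalism of Section \ref{s intertwin operat}.
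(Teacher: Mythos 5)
Your proposal is correct in outcome but takes a genuinely different route from the paper, and it aims at a stronger statement: that under the identifications $I(J,U^{r,\eta^{\vee}})\cong T^0(U)/T^r(U)$ and $I(\theta(J),\phi(U^{r,\eta^{\vee}}))\cong T^0(\phi(U))/T^r(\phi(U))$ the two operators are the \emph{same} map, so that in particular their images agree. The paper avoids any such computation: using Frobenius reciprocity, the $\bullet$-duality $I(J,U^{r,\eta^{\vee}})^{\bullet}\cong I(\theta(J),\phi(U^{r,\eta^{\vee}}))$ (Lemma \ref{lem bullet dual}), self-$\bullet$-duality of simples (Lemma \ref{lem bullet dual ismo}) and the central-character decomposition of Lemma \ref{lem decompose for lang class}, it shows that $\mathrm{Hom}_{\mathbb H}(I(J,U^{r,\eta^{\vee}}),I(\theta(J),\phi(U^{r,\eta^{\vee}})))\cong\mathbb C^r$, that the image of any such map is the $\mathbb H$-submodule generated by the image of $1\otimes U^{r,\eta^{\vee}}$, and that, by a dimension count, both $\widetilde{\Delta}$ and $\Delta$ carry $1\otimes U^{r,\eta^{\vee}}$ isomorphically onto the copy of $U^{r,\eta^{\vee}}$ inside the target; hence the images coincide. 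Your direct two-parameter argument buys the sharper conclusion (equality of the maps, which would also streamline the later comparison in Theorem \ref{thm JF general stand}) at the cost of heavier bookkeeping; the paper's argument is softer and shorter but only identifies the images.

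Two points in your sketch should be repaired. First, the justification that the $\eta^{\vee}$-nilpotent shift ``survives unchanged'' because $W_J$ acts trivially on $V_J^{\vee,\bot}$ is off target: commuting $v$ past $\tau_{w^{\theta(J)}}$ replaces $v$ by $(w^{\theta(J)})^{-1}(v)$ (Lemma \ref{lem basic comm lem}), and $w^{\theta(J)}\notin W_J$; the deformation direction is in fact transported to $\phi(\eta^{\vee})\in V_{\theta(J)}^{\vee,\bot}$, which is precisely why the target coefficient module is the $\phi$-twist. The correct (and easier) reason the $\tau_{w^{\theta(J)}}$-step is harmless is functoriality: once your comparison map of coefficient modules is an $\mathcal H_{\theta(J)}$-morphism (the localized action on the quotient by $\mathbf t^{r}$ exists for generic $\mathbf a$ because the $\mathbf t$-shift is nilpotent), applying $\mathcal H^{\theta(J)}\otimes_{\mathcal H_{\theta(J)}}(-)$ automatically sends $\tau_{w^{\theta(J)}}\otimes x=\widetilde{\tau}_{w^{\theta(J)}}\otimes qx$, $q\in\mathcal O(\theta(J))$, to the corresponding element. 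Second, you must justify that specialization at $\mathbf a=0$ is compatible with restriction to the line $\mathbf a=\mathbf t\eta^{\vee}$ followed by reduction modulo $\mathbf t^{r}$; this is exactly where Proposition \ref{prop intertwin properties}(1) enters, since holomorphy of the standard-form coefficients at $\mathbf a=0$ makes the two specializations commute. With these repairs your approach goes through.
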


\begin{proof}
By using Frobenius reciprocity, Lemma \ref{lem bullet dual ismo}, Lemma \ref{lem bullet dual} and Lemma \ref{lem decompose for lang class}, we have 
\[ \mathrm{Hom}_{\mathbb{H}}(I(J,U), I(\theta(J), \phi(U))) \cong \mathrm{Hom}_{\mathbb{H}_J}(U^{r,\eta^{\vee}}, U^{r,\eta^{\vee}}) \cong \mathbb{C}^r. \]
Furthermore, for an element $\psi \in \mathrm{Hom}_{\mathbb{H}}(I(J,U^{r,\eta^{\vee}}), I(\theta(J), \phi(U^{r,\eta^{\vee}}))) $, $\mathrm{im} \psi$ is determined by $\mathrm{im} \psi|_{1\otimes U^{r,\eta^{\vee}}}$. On the other hand by counting dimensions, one can conclude that 
\[ \widetilde{\Delta}^{U^{r,\eta^{\vee}}}(1 \otimes U^{r,\eta^{\vee}}) \cong \Delta^{U^{r,\eta^{\vee}}}(1 \otimes U^{r,\eta^{\vee}}) \cong 1 \otimes U^{r,\eta^{\vee}} .\]
Combining all these, we prove the lemma. 
\end{proof}

\subsection{A quotient of generalized standard modules}

\begin{lemma} \label{lem unique sub}
Let $\mathbb{A}$ be a complex associative algebra with an unit. Let $X$ be a finite-dimensional $\mathbb{A}$-module. Fix a finite collection $\left\{ L_1, \ldots, L_r \right\}$ of simple $\mathbb{A}$-modules. There exists a unique submodule $N$ such that
\begin{enumerate}
\item the composition factors of the socle of $X/N$ are isomorphic to some $L_i$, 
\item no composition factors of $N$ are isomorphic to some $L_i$.
\end{enumerate}
\end{lemma}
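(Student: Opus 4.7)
The plan is a standard maximality argument. Let $\mathcal{S}$ denote the collection of all submodules $N' \subseteq X$ whose composition factors are all non-isomorphic to every $L_i$. The collection $\mathcal{S}$ is nonempty since $0 \in \mathcal{S}$, and I would first check that $\mathcal{S}$ is closed under sums: if $N_1, N_2 \in \mathcal{S}$, then $N_1 + N_2$ is a quotient of $N_1 \oplus N_2$, whose composition factors are among those of $N_1$ and $N_2$, so $N_1 + N_2 \in \mathcal{S}$. Since $X$ is finite-dimensional, there is a unique maximal element $N \in \mathcal{S}$, and this $N$ will be the submodule we seek.

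Next I would verify property (1). Suppose, for contradiction, that the socle of $X/N$ contains a simple submodule $L$ not isomorphic to any $L_i$. Let $N' \subseteq X$ be the preimage of $L$ under the quotient map $X \to X/N$, so that $N'/N \cong L$. Then the composition factors of $N'$ are those of $N$ together with $L$, none of which is isomorphic to any $L_i$. Hence $N' \in \mathcal{S}$ strictly contains $N$, contradicting maximality.

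For uniqueness, suppose $N$ and $N''$ both satisfy (1) and (2). By property (2), both $N$ and $N''$ lie in $\mathcal{S}$, hence so does $N + N''$ by the closure argument above. Consider $(N + N'')/N$, a submodule of $X/N$ with no composition factor isomorphic to any $L_i$. But by property (1) for $N$, every nonzero submodule of $X/N$ must meet the socle of $X/N$ nontrivially, so it must contain a simple submodule isomorphic to some $L_i$. Therefore $(N + N'')/N = 0$, giving $N'' \subseteq N$; by symmetry $N \subseteq N''$, so $N = N''$.

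There is no real obstacle here; the argument is entirely formal and relies only on finite-dimensionality (to extract a maximal element of $\mathcal{S}$) and the elementary fact that composition factors of a sum are controlled by those of the summands. The only small point to be careful about is the closure of $\mathcal{S}$ under sums, which is what makes the maximal element unique rather than merely maximal.
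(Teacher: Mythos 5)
Your proof is correct and follows essentially the same route as the paper: a maximality argument in the set of submodules with no composition factor isomorphic to any $L_i$, using closure under sums (via the second isomorphism theorem / Jordan--H\"older) to get a unique maximal element, then deriving property (1) by contradiction and uniqueness by showing any other candidate lies in that set. The only cosmetic difference is that you spell out the uniqueness step in more detail than the paper does, which is fine.
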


\begin{proof}
Let $\mathcal M$ be the set of all proper submodules of $X$ whose composition factors are not isomorphic to any of $L_i$. $\mathcal M$ is nonempty since the zero module is in $\mathcal M$. Suppose $N_1$ and $N_2$ are two maximal element (with respect to the inclusion) in $\mathcal M$. By considering $N_1+N_2$ which has no composition factor of $L_i$ and using the maximality of $N_1$ and $N_2$, we have $N_1=N_2$. (To see $N_1+N_2$ has no composition factor of $L_i$, we can use $(N_1+N_2)/N_2 \cong N_1/(N_1 \cap N_2)$ and apply the Jordan-H\"older Theorem for composition factors by finite dimensionality.) Hence $\mathcal N$ has a unique maximal element. Let $N$ be the maximal element in $\mathcal N$. Then $N$ automatically satisfies (2). To show $N$ also satisfies the property (1), suppose there exists a simple module $L$ such that $\mathrm{Hom}_{\mathbb{A}}(L, X/N)\neq 0$ and $L$ is non-isomorphic to any of $L_i$. Let $0 \neq f \in \mathrm{Hom}_{\mathbb{A}}(L, X/N)\neq 0$. Then $f(L)+N \in \mathcal N$, contradicting the maximality of $N$. Hence $N$ also satisfies (1) and this proves the existence part of the lemma. The uniqueness part follows from the fact that a module satisfying (1) and (2) will lie in $\mathcal M$.
\end{proof}


We now describe quotients of some generalized standard modules of strict $S(V)$-type in terms of the Jantzen filtration of (ordinary) standard modules. In the case that the generalized standard module is the standard module, the quotient simply gives the unique simple quotient in the Langlands classification. 
\begin{theorem} \label{thm JF general stand}
Let $(J,U) \in \Xi_L$ (Definition \ref{def generalized St}) and let $0 \neq \eta^{\vee} \in V_J^{\bot, \vee}$. Let $U^{r-1,\eta^{\vee}}$ and $U^{r, \eta^{\vee}}$ be as in Section \ref{ss realization sv ext}. Let $\iota: I(J,U^{r-1, \eta^{\vee}}) \hookrightarrow I(J, U^{r, \eta^{\vee}}) \cong \mathrm{JF}_{\eta^{\vee}}^0(J,U^{r, \eta^{\vee}})$ be the inclusion map (unique up to a scalar).   Then
\begin{enumerate}
\item The map $\iota$ induces an injective map from $L(J, U^{r-1,\eta^{\vee}})$ to $L(J,U^{r,\eta^{\vee}})$.
\item $\mathrm{JF}^{0}_{\eta^{\vee}}(J, U)/\mathrm{JF}^{r}_{\eta^{\vee}}(J, U)$ is isomorphic to $  L(J,U^{r,\eta^{\vee}})/L(J,U^{r-1,\eta^{\vee}}) $.
\item $\mathrm{JF}^0_{\eta^{\vee}}(J, U^{r, \eta^{\vee}})/\mathrm{JF}^1_{\eta^{\vee}}(J, U^{r, \eta^{\vee}}) \cong L(J,U^{r, \eta^{\vee}})$ is the unique indecomposable quotient of $I(J,U^{r, \eta^{\vee}})$ with the properties that: (a) the module has unique simple quotient and unique simple submodule, both of which are isomorphic to $L(J,U)$ and (b) the multiplicity of $L(J,U)$ in the composition series of the module is the same as that in the composition series of $I(J,U^{r, \eta^{\vee}})$.
\item $L(J,U^{r, \eta^{\vee}})$ is self $\bullet$-dual.
\end{enumerate}
\end{theorem}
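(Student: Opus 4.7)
The plan is to exploit the identifications $I(J, U^{i,\eta^{\vee}}) \cong T^0(U)/T^i(U)$ and $I(\theta(J), \phi(U^{i,\eta^{\vee}})) \cong T^0(\phi(U))/T^i(\phi(U))$ from Section \ref{ss realization sv ext}, under which $\iota$ becomes multiplication by $\mathbf{t}$ and, by the $\mathbb{C}(\mathbf t)$-linearity of $\Delta_{\mathbf t\eta^{\vee}}$, the maps $\widetilde{\Delta}^{U^{i,\eta^{\vee}}}$ fit into a commutative square with $\mathbf t$-multiplication on both sides. For part (1), this commutativity means the map $L(J,U^{r-1,\eta^{\vee}}) \to L(J,U^{r,\eta^{\vee}})$ induced by $\iota$ is the restriction of $\mathbf t$-multiplication $T^0(\phi(U))/T^{r-1}(\phi(U)) \to T^0(\phi(U))/T^r(\phi(U))$; the latter is injective (if $\mathbf t\,\overline y = 0$ then any lift $y$ satisfies $\mathbf t y \in T^r(\phi(U))$, forcing $y \in T^{r-1}(\phi(U))$). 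For part (2), I expand $\mathrm{JF}^0_{\eta^{\vee}}(J,U)/\mathrm{JF}^r_{\eta^{\vee}}(J,U) = T^0(U)/(\ker(\Delta^r) + T^1(U)) \cong \mathrm{im}(\Delta^r)/\Delta^r(T^1(U))$. Since $\mathrm{im}(\Delta^r) = L(J,U^{r,\eta^{\vee}})$ by Lemma \ref{lem image of delta} and Definition \ref{def quotient gen}, and $\Delta^r(T^1(U)) = \mathbf t\cdot\mathrm{im}(\widetilde{\Delta}^{U^{r-1,\eta^{\vee}}})$ is identified via the commutative square with $\iota(L(J,U^{r-1,\eta^{\vee}}))$, this quotient equals $L(J,U^{r,\eta^{\vee}})/L(J,U^{r-1,\eta^{\vee}})$.

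For part (3), I first verify that $L(J,U^{r,\eta^{\vee}})$ itself satisfies (a) and (b). By part (2) and induction on $r$, the multiplicity of $L(J,U)$ in $L(J,U^{r,\eta^{\vee}})$ equals $r$, matching its multiplicity in $I(J,U^{r,\eta^{\vee}})$ (obtained from the filtration with subquotients $I(J,U)$, each contributing one copy of $L(J,U)$). That $L(J,U)$ is the unique simple quotient of $I(J,U^{r,\eta^{\vee}})$ follows from Frobenius reciprocity together with Lemma \ref{lem decompose for lang class} (isolating the relevant central-character component of $\mathrm{Res}_{\mathbb{H}_J} L(J,U)$), reducing to $\mathrm{Hom}_{\mathbb{H}_J}(U^{r,\eta^{\vee}}, U) = \mathbb{C}$; this holds because $U \cong \overline U\otimes L$ is the unique simple quotient of $U^{r,\eta^{\vee}} \cong \overline U\otimes L^r$, where $L^r$ is the indecomposable $S(V_J^{\bot})$-module of dimension $r$ associated to $\eta^{\vee}$. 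The unique simple submodule $L(J,U)$ is obtained by embedding $L(J,U^{r,\eta^{\vee}}) \subset I(\theta(J), \phi(U^{r,\eta^{\vee}})) \cong I(J,U^{r,\eta^{\vee}})^{\bullet}$ (Lemma \ref{lem bullet dual}), whose socle is $\bullet$-dual to the unique simple quotient $L(J,U)$ and hence equals $L(J,U)$ by Lemma \ref{lem bullet dual ismo}. For uniqueness, given any indecomposable quotient $Q$ satisfying (a) and (b), the kernel $K$ of $I(J,U^{r,\eta^{\vee}}) \twoheadrightarrow Q$ has no $L(J,U)$-composition factors by (b), so Lemma \ref{lem unique sub} applied to $\{L(J,U)\}$ yields $K \subseteq N$, where $N$ is the unique maximal such submodule; the same reasoning gives $\ker(\Delta^{U^{r,\eta^{\vee}}}) \subseteq N$. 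Thus both $Q$ and $L(J,U^{r,\eta^{\vee}})$ surject onto $I(J,U^{r,\eta^{\vee}})/N$ with kernels having no $L(J,U)$-composition factors, and since both have unique simple socle $L(J,U)$ (any nonzero submodule contains $L(J,U)$ in its socle, contradicting the composition-factor count unless it vanishes), these kernels must be zero, giving $Q \cong L(J,U^{r,\eta^{\vee}})$.

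For part (4), I apply $\bullet$ to $\Delta^{U^{r,\eta^{\vee}}}$ to produce a nonzero map $\Delta^{\bullet} \colon I(J,U^{r,\eta^{\vee}}) \to I(J,U^{r,\eta^{\vee}})^{\bullet}$ with image isomorphic to $L(J,U^{r,\eta^{\vee}})^{\bullet}$ (via the non-degeneracy of the $\bullet$-pairing between a module and its dual). Thus $L(J,U^{r,\eta^{\vee}})^{\bullet}$ is an indecomposable quotient of $I(J,U^{r,\eta^{\vee}})$. Since $\bullet$-duality interchanges unique simple quotient and unique simple submodule and preserves multiplicities of self-$\bullet$-dual simples (Lemma \ref{lem bullet dual ismo}), both (a) and (b) remain valid for $L(J,U^{r,\eta^{\vee}})^{\bullet}$, so the uniqueness in part (3) yields $L(J,U^{r,\eta^{\vee}})^{\bullet} \cong L(J,U^{r,\eta^{\vee}})$. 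The principal obstacle in this plan is the uniqueness argument in part (3): one has to simultaneously invoke the multiplicity condition (b), the socle and cosocle information in (a), and Lemma \ref{lem unique sub} to pin down the kernel precisely, rather than rely on coarser dimension counts or composition-factor multiplicities alone, since $a$ $priori$ several indecomposable quotients of $I(J,U^{r,\eta^{\vee}})$ could share these multiplicity and socle properties.
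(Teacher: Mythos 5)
Your argument is correct, and for parts (1), (3) and (4) it runs along essentially the same lines as the paper: the identification $I(J,U^{i,\eta^{\vee}})\cong T^0(U)/T^i(U)$ with $\iota$ realized as multiplication by $\mathbf t$, Lemma \ref{lem image of delta} to identify the relevant images with $L(J,U^{i,\eta^{\vee}})$, Frobenius reciprocity plus Lemma \ref{lem decompose for lang class} for the unique simple quotient, a duality argument for the unique simple submodule (you use the $\bullet$-dual via Lemmas \ref{lem bullet dual} and \ref{lem bullet dual ismo}, the paper uses the Barbasch--Moy $*$-dual; this is immaterial), Lemma \ref{lem unique sub} for the uniqueness in (3) (you re-derive the needed consequence rather than quoting its uniqueness clause, which is fine), and the characterization in (3) to get self-duality in (4). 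The genuine divergence is part (2): the paper reduces to the claim $\ker F'=\mathrm{im}\,\iota'+\ker\widetilde{\Delta}_r$ and proves it by an iterated leading-term correction of representatives that invokes the linear-independence statement of Proposition \ref{prop strong linearly indpende}, whereas you compute directly $\mathrm{JF}^0_{\eta^{\vee}}(J,U)/\mathrm{JF}^r_{\eta^{\vee}}(J,U)\cong T^0(U)/(\ker\Delta^{r,U}_{\mathbf t\eta^{\vee}}+T^1(U))\cong \mathrm{im}\,\Delta^{r,U}_{\mathbf t\eta^{\vee}}/\Delta^{r,U}_{\mathbf t\eta^{\vee}}(T^1(U))$ by the first isomorphism theorem and identify $\Delta^{r,U}_{\mathbf t\eta^{\vee}}(T^1(U))$ with the copy of $L(J,U^{r-1,\eta^{\vee}})$ embedded by the map of (1); the only inputs are $\mathbb{H}$- and $\mathbf t$-linearity of $\Delta_{\mathbf t\eta^{\vee}}$, the inclusion $\Delta_{\mathbf t\eta^{\vee}}(T^i(U))\subset T^i(\phi(U))$, and injectivity of multiplication by $\mathbf t$ modulo $T^r$, which holds since $\Omega(\mathbf t)$ is a discrete valuation ring with uniformizer $\mathbf t$. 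As far as I can see this shortcut is valid and bypasses Proposition \ref{prop strong linearly indpende} entirely, so your route is genuinely shorter; the paper's explicit choice of representatives buys nothing extra for this theorem. Two small points you leave implicit (as does the paper): in (3)(b) you use that $L(J,U)$ has multiplicity one in $I(J,U)$, and in (2) the isomorphism $\mathrm{im}\,\Delta^{r,U}_{\mathbf t\eta^{\vee}}\cong L(J,U^{r,\eta^{\vee}})$ is via Lemma \ref{lem image of delta}, i.e. an isomorphism rather than an equality of subspaces.
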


\begin{proof}

For simplicity, we shall write $\widetilde{\Delta}_i$ for $\widetilde{\Delta}^{U^{i,\eta^{\vee}}}$.

We first prove (1). By Lemma \ref{lem image of delta}, we have isomorphisms $\mathrm{coker} \widetilde{\Delta}^{U^{i,\eta^{\vee}}} \cong \mathrm{coker} \Delta^{U^{i,\eta^{\vee}}} \cong L(J,U^{i,\eta^{\vee}})$ ($i=r-1,r$). The map $\iota$ induces a natural map from $\iota': I(J,U^{r-1,\eta^{\vee}}) \cong T^{0}(U)/T^{r-1}(U) \hookrightarrow T^0(U)/T^r(U) \cong I(J,U^{r,\eta^{\vee}})$ given by a multiplication of $\mathbf t$. Hence we have a map $\widetilde{\iota}: T^0(U)/T^{r-1}(U) \rightarrow \mathrm{coker} \widetilde{\Delta}_{r}$. By comparing the maps $\widetilde{\Delta}_{r-1}, \widetilde{\Delta}_r$, it is straightforward to verify that $\mathrm{ker} \iota =\mathrm{ker} \widetilde{\Delta}_{r-1}$. Hence we obtain an induced injective map from $\mathrm{coker} \Delta^{U^{r-1,\eta^{\vee}}}$ to  $\mathrm{coker} \Delta^{U^{r,\eta^{\vee}}}$.

We now prove (2). 
\[\xymatrix{ 
0 \ar[r] & T^0(U)/T^{r-1}(U) \ar[r]^{\iota'} &     T^0(U)/T^r(U)            \ar[d]^{\widetilde{\Delta}_r}    \ar[r]^{F}                   &      T^0(U)/T^1(U)    \ar[d]^{\overline{\Delta}^r}  \ar[r] & 0 \\
  & &   T^0(\phi(U))/T^r(\phi(U))  & T^0(\phi(U))/(T^r(\phi(U))+\Delta_{\mathbf t\eta^{\vee}}(T^1(U))) \\
   }
\]
where $F$ is the natural surjective map so that the top sequence is exact. Define $F'=\widetilde{\Delta}^1 \circ F$. We claim that $\mathrm{ker} F'=\mathrm{im} \iota' + \mathrm{ker} \widetilde{\Delta}^r$. From this, we have 
\[ \frac{(T^0(U)/T^r(U)) }{\mathrm{ker}F'} \cong \frac{(T^0(U)/T^r(U) )}{(\mathrm{im} \iota' + \mathrm{ker} \widetilde{\Delta}_r) }\cong \frac{\mathrm{coker} \widetilde{\Delta}_r}{(\mathrm{im} \iota' + \mathrm{ker} \widetilde{\Delta}_r)/ \mathrm{ker} \widetilde{\Delta}_r } \cong \frac{L(J,U^{r,\eta^{\vee}})}{L(J,U^{r-1,\eta^{\vee}})}.
\]
The last isomorphism follows from (1). On the other hand, we have
\[ \frac{(T^0(U)/T^r(U)) }{\mathrm{ker}F'} \cong \frac{T^0(U)/T^1(U)}{\mathrm{ker} \overline{\Delta}^r} \cong \frac{T^0(U)}{\mathrm{ker}\Delta_{\mathbf t\eta^{\vee}}^{r,U}+T^1(U)} \cong \frac{\mathrm{JF}^0_{\eta^{\vee}}(J,U)}{\mathrm{JF}^r_{\eta^{\vee}}(J,U)} .\]
This shall prove (2). The first isomorphism above follows from the fact that $F$ is surjective. For the details of the second isomorphism, one can deduce from the discussions in Section \ref{ss line inde lemma}. (More precisely, using Proposition \ref{prop strong linearly indpende}, one can show that $\mathrm{ker} \overline{\Delta}^r \cong (\mathrm{ker}\Delta_{\mathbf t\eta^{\vee}}^{r,U}+T^1(U))/T^1(U)$.)

 We now turn to prove the claim $\mathrm{ker} F'=\mathrm{im} \iota' + \mathrm{ker} \widetilde{\Delta}_r$. The inclusions $\mathrm{im} \iota' \subset \mathrm{ker} F'$ and $\mathrm{ker} \widetilde{\Delta}_r \subset \mathrm{ker} F'$ follow from definitions. We now pick a representative $\widetilde{x}_{\mathbf t} \in T^0(U)$ of an element in $\mathrm{ker} F'$. Write $\widetilde{x}_{\mathbf t}$ as the form:
\[ \sum_{i=0}^{r-1} \mathbf t^i\sum_{p\in \mathbb{Z}_{\geq 0}\cup \left\{ \infty \right\}} x^{p,i}_{\mathbf t}+\mathbf t^r z_{\mathbf t},
\]
where $x^{p,i}_{\mathbf t} \in X^p_{\mathbf t}$ and $z_{\mathbf t} \in T^0(U)$. Here $X^p_{\mathbf t}$ is defined as in Section \ref{ss line inde lemma}. For simplicity, set $y^{p,i}_{\mathbf t}=\frac{1}{\mathbf t^p}\Delta_{\mathbf t\eta^{\vee}}(x^{p,i}_{\mathbf t})$. Let $N$ be the least integer such that there exists a pair $(p,i)$ with $p+i=N$ and $x^{p,i} \neq 0$. If such integer exists and $N \leq r-1$, then the element $\Delta^{U^{r,\eta^{\vee}}}_{\mathbf t\eta^{\vee}}(\widetilde{x}_{\mathbf t})$ can be written as
\[   \mathbf t^N( y_{\mathbf t}^{0,N}+y_{\mathbf t}^{1,N-1}+\ldots, y_{\mathbf t}^{N,0}) +\mathbf t z_{\mathbf t}' ,
\]
where $z'_{\mathbf t} \in T^0(\phi(U))$. Recall that $\widetilde{x}_{\mathbf t}$ is a representative of an element in $\mathrm{ker} \widetilde{\Delta}^r$. We must then have $y_{\mathbf t}^{0,N}=0$ by Proposition \ref{prop strong linearly indpende} and so $x_{\mathbf t}^{0,N}=0$. Let $x'_{\mathbf t}=\mathbf t x_{\mathbf t}^{1,N}+\ldots +\mathbf t^{N}x_{\mathbf t}^{N,0} \in T^1(U)$. Now we consider the element $x_{\mathbf t}-x_{\mathbf t}'$. Repeating the above process we shall eventually obtain an element $x''_{\mathbf t} \in T^1(U)$ such that $\Delta_{\mathbf t\eta^{\vee}}^{U^{r,\eta^{\vee}}}(x_{\mathbf t}-x''_{\mathbf t}) \in T^r(\phi(U))$ and $x''_{\mathbf t} \in T^1(U)$. By definition $x''_{\mathbf t}+T^r(U) \in \mathrm{im} \iota'$ and hence $x_{\mathbf t}+T^r(U) \in \mathrm{\im} \iota'+\mathrm{ker} F'$ as desired.

We now prove (3). Let $Z=\mathrm{JF}^0_{\eta^{\vee}}(J,U^{r,\eta^{\vee}})/\mathrm{JF}^1_{\eta^{\vee}}(J,U^{r,\eta^{\vee}})$. Since $\mathrm{JF}^0(J, U^{r,\eta^{\vee}})/\mathrm{JF}^1(J,U^{r,\eta^{\vee}})$ is a quotient of $I(J,U^{r,\eta^{\vee}})$, we have a surjective map from $\mathbb{H} \otimes_{\mathbb{H}_J} U^{r,\eta^{\vee}}$ to $\mathrm{JF}^0(J,U^{r,\eta^{\vee}})$. By the left-exactness of $\mathrm{Hom}_{\mathbb{H}}(., Y)$, we have an injection from $\mathrm{Hom}_{\mathbb{H}}(Z,Y)$ to $\mathrm{Hom}_{\mathbb{H}}(I(J,U^{r,\eta^{\vee}}),Y)$. Then $Z$ having a unique simple quotient isomorphic to $L(J,U)$ follows from the fact that $I(J,U)$ has a unique simple quotient isomorphic to $L(J,U)$. This also implies $Z$ is indecomposable. By definitions,
\[  T^0(U^{r,\eta^{\vee}})/\mathrm{ker} \Delta_{\mathbf t\eta^{\vee}}^{U^{r,\eta^{\vee}}} \cong \mathrm{im} \Delta_{\mathbf t\eta^{\vee}}^{U^{r,\eta^{\vee}}}/ T^1(\phi(U^{r,\eta^{\vee}})).
\]
Hence $Z$ is also isomorphic to a submodule of $T^0(\phi(U^{r,\eta^{\vee}}))/T^1(\phi(U^{r,\eta^{\vee}})) \cong I(\theta(J), \phi(U))$. Now by Lemma \ref{lem decompose for lang class} and Frobenius reciprocity, $I(J, U^{r, \eta^{\vee}})$ has a unique simple quotient and so does $L(J, U^{r, \eta^{\vee}})$. By considering $I(\theta(J), \phi(U))^*$ as in the proof of Proposition \ref{prop intertwin properties}, $I(\theta(J), \phi(U^{r, \eta^{\vee}}))^*$ has a unique simple quotient and so $I(\theta(J), \phi(U^{r, \eta^{\vee}}))$ has a unique simple submodule. Hence $L(J, U^{r, \eta^{\vee}})$ also has a unique simple submodule. This proves the property (a). Property (b) follows from an induction argument using (2), which we have proved. 

The uniqueness for (3)  follows from Lemma \ref{lem unique sub}.

We now consider (4). For notation simplicity, set $G=\Delta^{U^{r, \eta^{\vee}}}$. Then we have a dual map $G^{\bullet}: I(\theta(J), \phi(U^{r, \eta^{\vee}}))^{\bullet} \rightarrow I(J, U^{r, \eta^{\vee}})^{\bullet}$. Then the image and the cokernel of $G^{\bullet}$ is isomorphic to $L(J, U^{r, \eta^{\vee}})^{\bullet}$. On the other hand, by Lemma \ref{lem bullet dual}, we have 
\[  I(\theta(J), \phi(U^{r, \eta^{\vee}}))^{\bullet} \cong I(J, U^{r, \eta^{\vee}}), \quad I(J, U^{r, \eta^{\vee}})^{\bullet} \cong I(\theta(J),  \phi(U^{r, \eta^{\vee}})) .
\]
By using (2) and Lemma \ref{lem bullet dual ismo}, we have $\mathrm{im}\ G^{\bullet} \cong \mathrm{im}\ \Delta^{U^{r, \eta^{\vee}}}$. Hence
\[ L(J, U^{r, \eta^{\vee}}) \cong L(J, U^{r, \eta^{\vee}})^{\bullet} . \qedhere
\]
\end{proof}

\begin{remark}
Generalizing Theorem \ref{thm JF general stand} to other layers of Jantzen filtrations of $I(J,U^{r,\eta^{\vee}})$ seems to be harder or less direct from our approach (because the formula involves derivatives and it is hard to apply). We give an example to illustrate the Jantzen filtration of a higher layer may be more complicated. Let $\mathbb{H}$ be of type $A_1$ and let $k=1$. Let $\alpha$ be the unique simple root of type $A_1$. Let $U$ be the unique irreducible $S(V)$-module with the weight $\frac{1}{2}\alpha^{\vee}$. We consider the Jantzen filtration of $\mathbb{H} \otimes_{S(V)} U^{2,\eta^{\vee}}$, where $\eta^{\vee}=\frac{1}{2}\alpha^{\vee}$. Take $v_0=\alpha$. Let
\[ \widetilde{x}_{\mathbf t}= \left( t_{s_{\alpha}}-\frac{1}{\alpha} \right) \otimes\left(0,   u \right) .\]
We have
\begin{align}
\Delta_{\mathbf t\eta^{\vee}}(\widetilde{x}_{\mathbf t})  &= 1 \otimes \left( 0, \frac{\mathbf t^2+2\mathbf t}{(\mathbf t+1)^2} u \right) \in T^1(\phi(U_{\mathbf t\eta^{\vee}})) \setminus T^2(\phi(U_{\mathbf t\eta^{\vee}}))
\end{align}
It might be tempted to think in the beginning that 
$x_{\mathbf t}|_{\mathbf t=0}\in \mathrm{JF}^1(J, U^{2,\eta^{\vee}}) \setminus \mathrm{JF}^2(\emptyset,U^{2,\eta^{\vee}})$, which however is false.
Let 
\[ y_{\mathbf t} = -\left( t_{s_{\alpha}}-\frac{1}{\alpha} \right) \otimes \left( u, 0 \right) .\]
Then $\Delta_{\mathbf t\eta^{\vee}}(\mathbf ty_{\mathbf t}+ x_{\mathbf t}) \in T^2(\phi(U))$. Indeed we have $ \mathrm{JF}^1(\emptyset,U^{2,\eta^{\vee}})=\mathrm{JF}^2(\emptyset,U^{2,\eta^{\vee}})$ and $\mathrm{JF}^3(\emptyset, U^{2,\eta^{\vee}})=0$. 
\end{remark}


\subsection{Good and bad directions }

We now define a set, which will be used to parametrize certain self-extensions of simple modules.

\begin{definition} \label{def bad dir}
For $\eta^{\vee} \in V_J^{\bot, \vee}$, we say $\eta^{\vee}$ is in a {\it bad direction} (with respect to $J$ and $U$) if
\[  \mathrm{JF}^1_{\eta^{\vee}}(J,U)=\mathrm{JF}^2_{\eta^{\vee}}(J,U) .
\]
Otherwise we say $\eta^{\vee}$ is a {\it good direction}. Denote by $V^{\bot}_{\mathrm{bad}}(J,U)$, or simply $V^{\bot}_{\mathrm{bad}}$, the set of vectors in a bad direction. 
\end{definition}

\begin{example}\label{ex A2 bad good}
We consider $\mathbb{H}$ to be of type $A_2$ with $k \equiv 1$. Let $\alpha, \beta$ be the simple roots. Consider the central character $\gamma^{\vee}=\frac{1}{2}\alpha^{\vee} +C(\beta^{\vee}+2\alpha^{\vee}) $, where $C$ is taken to be a sufficiently large positive number. In particular, $\gamma^{\vee}$ is in the dominant chamber. We consider the standard module $X=\mathbb{H} \otimes_{S(V)} \mathbb{C}_{\gamma^{\vee}}$. By a simple computation, we have that $X$ contains two composition factors. 

The element $\frac{1}{2}(\beta^{\vee}+2\alpha^{\vee})$ is in a bad direction. We have 
\[ \mathrm{JF}^i_{\frac{1}{2}(\beta^{\vee}+2\alpha^{\vee})}(\emptyset, \mathbb{C}_{\gamma^{\vee}}) \cong L(\emptyset, \mathbb{C}_{\gamma^{\vee}})\]
 for all $i \geq 1$. In contrast, the element $\frac{1}{2}\alpha^{\vee}$ is a good direction since $\mathrm{JF}_{\frac{1}{2}\alpha^{\vee}}^1(\emptyset, \mathbb{C}_{\gamma^{\vee}})$ is the unique simple submodule of $X$ and $\mathrm{JF}_{\frac{1}{2}\alpha^{\vee}}^2(\emptyset, \mathbb{C}_{\gamma^{\vee}})=0$.
\end{example}

\begin{remark}
When the central character supports a tempered module, we expect that those corresponding standard modules have ''less'' bad directions (or even no bad directions). In contrast, for the principal series of a generic central character, the whole space $V^{\vee}$ is the set of bad directions. Example \ref{ex A2 bad good} illustrates an example in between of the previous two cases. We expect that the occurrence of bad directions depends on how ''generic'' the central character of the standard module is. 

\end{remark}

\begin{proposition}
The set $V^{\bot}_{\mathrm{bad}}(J,U)$ forms a vector space i.e. closed under addition and scalar multiplication. 
\end{proposition}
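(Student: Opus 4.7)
The plan is to exhibit $V^{\bot}_{\mathrm{bad}}(J,U)$ as the kernel of an explicit $\mathbb{C}$-linear map from $V_J^{\vee,\bot}$, so that closure under addition and scalar multiplication follows automatically. The starting observation is that $\mathrm{JF}^1_{\eta^{\vee}}(J,U) = \ker\Delta^U$ is independent of $\eta^{\vee}$, since $\mathrm{JF}^0_{\eta^{\vee}}/\mathrm{JF}^1_{\eta^{\vee}} \cong L(J,U) = \mathrm{image}(\Delta^U)$ by Definition \ref{def quotient gen}. Hence the bad-direction condition $\mathrm{JF}^1_{\eta^{\vee}} = \mathrm{JF}^2_{\eta^{\vee}}$ is really a condition on the first-order Taylor behavior of $\Delta_{\mathbf t\eta^{\vee}}$ on the fixed subspace $\ker\Delta^U$.

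Concretely, for $x \in \ker\Delta^U$ take the constant lift $\iota(x) \in T^0(U)$ obtained from $1\otimes x \in \mathbb{H}\otimes\Omega(\mathbf t)\otimes U$. Since $\Delta^U(x) = 0$, the expansion of $\Delta_{\mathbf t\eta^{\vee}}(\iota(x))$ at $\mathbf t=0$ starts at order one: write
\[ \Delta_{\mathbf t\eta^{\vee}}(\iota(x)) = \mathbf t\,\mathcal{D}_1^{\eta^{\vee}}(x) + O(\mathbf t^2), \quad \mathcal{D}_1^{\eta^{\vee}}(x) \in I(\theta(J),\phi(U)). \]
Changing the lift $\iota(x)$ to $\iota(x) + \mathbf t\tilde z$ for any $\tilde z\in T^0(U)$ shifts $\mathcal{D}_1^{\eta^{\vee}}(x)$ by $\Delta^U(\tilde z|_{\mathbf t=0})\in L(J,U)$, so passing to the quotient yields a well-defined map
\[ \overline{\mathcal{D}}_1^{\eta^{\vee}}:\ker\Delta^U \longrightarrow I(\theta(J),\phi(U))/L(J,U). \]
Unwinding the definition of the Jantzen filtration in Section \ref{ss JF}, $x\in\mathrm{JF}^2_{\eta^{\vee}}$ iff some lift achieves $\mathcal{D}_1^{\eta^{\vee}}(x) = 0$, iff $\overline{\mathcal{D}}_1^{\eta^{\vee}}(x) = 0$; hence $\eta^{\vee}$ is a bad direction iff $\overline{\mathcal{D}}_1^{\eta^{\vee}}$ vanishes identically on $\ker\Delta^U$.

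The remaining ingredient is the $\mathbb{C}$-linearity of $\eta^{\vee} \mapsto \overline{\mathcal{D}}_1^{\eta^{\vee}}$, which is where the multivariable intertwining operator $\Delta_{\mathbf a}$ of Section \ref{s intertwin operat} plays its role. Fix a basis $\omega_1^{\vee},\ldots,\omega_{n-p}^{\vee}$ of $V_J^{\vee,\bot}$ and write $\eta^{\vee} = \sum_i c_i\omega_i^{\vee}$. By the holomorphicity in Proposition \ref{prop intertwin properties}(1), $\Delta_{\mathbf a}(\iota(x))$ is holomorphic at $\mathbf a=0$, so it admits a Taylor expansion
\[ \Delta_{\mathbf a}(\iota(x)) = \Delta^U(x) + \sum_i a_i D_i(x) + O(\mathbf a^2) \]
with fixed $\mathbb{C}$-linear operators $D_i:I(J,U)\to I(\theta(J),\phi(U))$. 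Substituting $\mathbf a=\mathbf t(c_1,\ldots,c_{n-p})$ and reading off the $\mathbf t^1$-coefficient yields $\mathcal{D}_1^{\eta^{\vee}}(x) = \sum_i c_i D_i(x)$, manifestly $\mathbb{C}$-linear in $\eta^{\vee}$. Therefore $V^{\bot}_{\mathrm{bad}}(J,U)$ is the kernel of the resulting $\mathbb{C}$-linear map from $V_J^{\vee,\bot}$ to $\mathrm{Hom}_{\mathbb{C}}(\ker\Delta^U, I(\theta(J),\phi(U))/L(J,U))$, hence a $\mathbb{C}$-subspace; the degenerate case $\eta^{\vee}=0$ is separately covered by the remark in Section \ref{ss JF} that $\mathrm{JF}^i_0=\mathrm{JF}^1_0$ for all $i\geq 1$. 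The main obstacle I anticipate is the careful verification of well-definedness and linearity in $\eta^{\vee}$; once $\Delta_{\mathbf a}$ is brought in, this reduces to routine bookkeeping with Taylor coefficients.
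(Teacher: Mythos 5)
Your proof is correct, and it takes a somewhat different, more structural route than the paper's. The paper identifies the target spaces for the various directions via the $\mathbf t$-grading and verifies directly, for a single fixed lift $x_{\mathbf t}\in T^0(U)$, that $\Delta_{\mathbf t\eta^{\vee}_1}(x_{\mathbf t})+\Delta_{\mathbf t\eta^{\vee}_2}(x_{\mathbf t})-\Delta_{\mathbf t(\eta^{\vee}_1+\eta^{\vee}_2)}(x_{\mathbf t})\in T^2(\phi(U))$ whenever the first two terms lie in $T^1(\phi(U))$, together with the observation that $0\in V^{\bot}_{\mathrm{bad}}(J,U)$ (closure under scalars being implicit, by rescaling $\mathbf t$). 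You instead package the same first-order information as a lift-independent class $\overline{\mathcal{D}}_1^{\eta^{\vee}}(x)\in I(\theta(J),\phi(U))/L(J,U)$, show that $\eta^{\vee}$ is bad exactly when this class vanishes on $\mathrm{JF}^1=\ker\Delta^U$, and obtain linearity in $\eta^{\vee}$ from the Taylor expansion of the multivariable operator $\Delta_{\mathbf a}$, so that $V^{\bot}_{\mathrm{bad}}(J,U)$ is literally the kernel of a $\mathbb{C}$-linear map. The underlying computation (the $\mathbf t$-linear term of the intertwining operator depends linearly on the direction) is the same in both arguments, but your formulation buys a genuine clarification: badness of $\eta^{\vee}_1$ and $\eta^{\vee}_2$ a priori only supplies lifts depending on the direction, and the lift-independence of $\overline{\mathcal{D}}_1^{\eta^{\vee}}$ is precisely what justifies combining them, a point the paper's one-line congruence (stated for one common lift) leaves implicit. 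Two small wording remarks: the equality $\mathrm{JF}^1_{\eta^{\vee}}(J,U)=\ker\Delta^U$ is better justified by noting that the $\mathbf t^0$-coefficient of $\Delta_{\mathbf t\eta^{\vee}}(x_{\mathbf t})$ equals $\Delta^U(x_{\mathbf t}|_{\mathbf t=0})$ for every holomorphic lift, rather than deduced from $\mathrm{JF}^0_{\eta^{\vee}}/\mathrm{JF}^1_{\eta^{\vee}}\cong L(J,U)$ alone; and the holomorphicity of $\Delta_{\mathbf a}(\iota(x))$ at $\mathbf a=0$ uses, besides Proposition \ref{prop intertwin properties}(1), that the left $\mathbb{H}$-action preserves holomorphic elements (the structure constants involve $\mathbf a$ only polynomially), which is consistent with how the paper itself manipulates these operators.
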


\begin{proof}
We can naturally identify $T^i(\phi(U_{\mathbf t\eta^{\vee}_1}))$, $T^1(\phi(U_{\mathbf t\eta^{\vee}_2}))$ and $T^i(\phi(U_{\mathbf t(\eta^{\vee}_1+\eta_2^{\vee})}))$ as vector spaces (via the grading by $\mathbf t$) and simply write $T^i(\phi(U))$. We also similarly do for $T^0(U)$. Let $x_{\mathbf t} \in T^0(U)$. If $\Delta_{\mathbf t\eta^{\vee}_1}(x_{\mathbf t}), \Delta_{\mathbf t\eta^{\vee}_2}(x_{\mathbf t}) \in T^1(\phi(U))$, then direct calculation from definitions gives that
\[\Delta_{\mathbf t\eta^{\vee}}(x_{\mathbf t})+ \Delta_{\mathbf t\eta^{\vee}_1}(x_{\mathbf t})-\Delta_{\mathbf t(\eta^{\vee}_1+\eta_2^{\vee})}(x_{\mathbf t}) \in T^2(\phi(U)) . 
\]
Note that we also have $0 \in V^{\bot}_{\mathrm{bad}}(J,U)$.
\end{proof}

\subsection{First self-extension}
Theorem \ref{thm JF general stand} provides structural information for a quotient of generalized standard modules. We shall show in Theorem \ref{thm jantan differential} how to obtain some information for $\mathrm{Ext}$-group from those information. One may expect to obtain some other information from those quotients (see Example \ref{ex yoneda product}).



Consider the short exact sequence
\[   0\rightarrow N(J,U) \rightarrow I(J,U) \stackrel{\mathrm{pr} }{\rightarrow} L(J, U) \rightarrow 0.
\]

\begin{lemma} \label{lem identify induced simple}
Let $(J,U) \in \Xi_L$. Then $\mathrm{pr}$ induces an isomorphism $\mathrm{Ext}^i_{\mathbb{H}}(I(J,U), I(J,U)) \cong \mathrm{Ext}^i_{\mathbb{H}}(I(J,U), L(J,U))$. 
\end{lemma}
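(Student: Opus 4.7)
The plan is to apply the functor $\mathrm{Hom}_{\mathbb{H}}(I(J,U), -)$ to the short exact sequence
\[ 0 \to N(J,U) \to I(J,U) \stackrel{\mathrm{pr}}{\to} L(J,U) \to 0 \]
and read off the associated long exact sequence of Ext groups. The induced map $\mathrm{pr}_*$ in each degree $i$ will be an isomorphism provided that
\[ \mathrm{Ext}^i_{\mathbb{H}}(I(J,U), N(J,U)) = 0 \quad \text{for all } i \geq 0, \]
and this is what I would focus on establishing.

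To obtain the vanishing on $N(J,U)$, I would first show it on each simple composition factor. By Lemma \ref{lem decompose for lang class}(2), every composition factor $Z$ of $N(J,U)$ is of the form $L(J',U')$ for some $(J',U') \in \Xi_L$ with $\nu(Z) = \nu(J',U') < \nu(J,U)$; in particular $\nu(J,U) \not\leq \nu(Z)$. Proposition \ref{prop ext lower langlands para} then yields $\mathrm{Ext}^i_{\mathbb{H}}(I(J,U), Z) = 0$ for all $i$ and every such composition factor.

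To pass from composition factors to the module $N(J,U)$ itself, I would induct on the composition length of $N(J,U)$. Given a short exact sequence
\[ 0 \to N' \to N(J,U) \to Z \to 0 \]
with $Z$ a simple quotient and $N'$ of strictly smaller length, the long exact sequence attached to $\mathrm{Hom}_{\mathbb{H}}(I(J,U), -)$ sandwiches $\mathrm{Ext}^i_{\mathbb{H}}(I(J,U), N(J,U))$ between $\mathrm{Ext}^i_{\mathbb{H}}(I(J,U), N')$ and $\mathrm{Ext}^i_{\mathbb{H}}(I(J,U), Z)$, both of which vanish by the inductive hypothesis together with the previous paragraph. Feeding this back into the long exact sequence for $0 \to N(J,U) \to I(J,U) \to L(J,U) \to 0$ gives the desired isomorphism $\mathrm{pr}_*$.

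I do not anticipate any serious obstacle: the argument is a formal devissage once the vanishing on simple subquotients is available, and that vanishing is exactly the content of Proposition \ref{prop ext lower langlands para} combined with the control on $\nu$-values furnished by Lemma \ref{lem decompose for lang class}(2).
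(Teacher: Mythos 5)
Your proposal is correct and follows essentially the same route as the paper: apply $\mathrm{Hom}_{\mathbb{H}}(I(J,U),-)$ to $0 \to N(J,U) \to I(J,U) \to L(J,U) \to 0$ and kill $\mathrm{Ext}^i_{\mathbb{H}}(I(J,U), N(J,U))$ using Proposition \ref{prop ext lower langlands para} together with Lemma \ref{lem decompose for lang class}(2). Your explicit devissage over the composition factors of $N(J,U)$ just spells out what the paper leaves implicit.
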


\begin{proof}
We apply the functor $\mathrm{Hom}_{\mathbb{H}}(I(J,U),.)$ to the short exact sequence before the lemma to obtain a long exact sequence. Then by Proposition \ref{prop ext lower langlands para} and Lemma \ref{lem decompose for lang class}, we have $\mathrm{Ext}^i_{\mathbb{H}}(I(J,U), N(J,U))=0$. Thus $\mathrm{Ext}^i_{\mathbb{H}}(I(J,U), I(J,U)) \cong \mathrm{Ext}^i_{\mathbb{H}}(I(J,U), L(J,U))$ via the induced map.
\end{proof}

\begin{theorem} \label{thm jantan differential}
Let $(J,U) \in \Xi_L$ (Definition \ref{def generalized St}). Recall that $V_{\mathrm{bad}}^{\bot}(J,U)$ is defined in Definition \ref{def bad dir}. Let
\[ \mathrm{pr}^{*,i}: \mathrm{Ext}^i_{\mathbb{H}}(L(J,U), L(J,U)) \rightarrow  \mathrm{Ext}^i_{\mathbb{H}}(I(J,U), L(J,U)) \]
be the natural map induced from the surjective map $I(J,U) \rightarrow L(J,U)$. Then 
\begin{enumerate}
\item Identify $\mathrm{Ext}^1_{\mathbb{H}}(I(J,U), I(J,U)) \cong \mathrm{Ext}^1_{\mathbb{H}}(I(J,U), L(J,U))$ via Lemma \ref{lem identify induced simple}.
Identify \begin{align*} \label{eqn ext standard same2}
  \mathrm{Ext}^1_{\mathbb{H}}(I(J,U), I(J,U)) &\cong \mathrm{Ext}^1_{S(V_J^{\bot})}(L, L) \oplus \mathrm{Ext}^1_{\mathbb{H}_J^{ss}}(\overline{U}, \overline{U}) 
\end{align*}
as in Lemma \ref{lem nat isom ext} and identify $\mathrm{Ext}^1_{S(V_J^{\bot})}(L, L)\cong V_J^{\vee, \bot}$. Then 
\[\mathrm{im}\ \mathrm{pr}^{*,1} \cap V_J^{\vee, \bot} \cong V_{\mathrm{bad}}^{\bot}(J,U) .\]
\item Suppose $\overline{U}$ is a discrete series or more generally $ \mathrm{Ext}^1_{\mathbb{H}_J^{ss}}(\overline{U}, \overline{U})=0$. Then 
\[\mathrm{Ext}^1_{\mathbb{H}}(L(J,U), L(J,U)) \cong V_{\mathrm{bad}}^{\bot}(J,U). \]
\end{enumerate}
\end{theorem}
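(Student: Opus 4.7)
The plan is as follows. First I would apply $\mathrm{Hom}_{\mathbb{H}}(-, L(J,U))$ to the defining short exact sequence $0 \to N(J,U) \to I(J,U) \to L(J,U) \to 0$ to obtain the long exact sequence
\[ 0 \to \mathrm{Hom}(L,L) \to \mathrm{Hom}(I,L) \to \mathrm{Hom}(N,L) \to \mathrm{Ext}^1(L,L) \xrightarrow{\mathrm{pr}^{*,1}} \mathrm{Ext}^1(I,L) \to \mathrm{Ext}^1(N,L). \]
By Lemma \ref{lem decompose for lang class}(2), every composition factor $Z$ of $N(J,U)$ satisfies $\nu(Z)<\nu(J,U)$, so $L(J,U)$ does not appear in $N(J,U)$ and $\mathrm{Hom}(N,L)=0$. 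This already gives injectivity of $\mathrm{pr}^{*,1}$, and identifies its image with the kernel of the restriction map $\mathrm{Ext}^1(I,L) \to \mathrm{Ext}^1(N,L)$. Combining with Lemma \ref{lem identify induced simple} to switch $\mathrm{Ext}^1(I,L) \cong \mathrm{Ext}^1(I,I)$, an element $\eta^{\vee}\in V_J^{\vee,\bot}$ corresponds to the push-out $E := I(J,U^{2,\eta^{\vee}})/N_1$, where $N_1$ denotes $N(J,U)$ inside the submodule copy $I_1\cong I(J,U)$ of $I(J,U^{2,\eta^{\vee}})$; then $\eta^{\vee}$ lies in the image of $\mathrm{pr}^{*,1}$ if and only if the pullback $\widetilde{N}$ of the extension $0 \to L(J,U) \to E \to I(J,U) \to 0$ along $N(J,U)\hookrightarrow I(J,U)$ splits as $L(J,U)\oplus N(J,U)$. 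Thus part (1) reduces to showing $\widetilde{N}$ is split if and only if $\eta^{\vee}\in V^{\bot}_{\mathrm{bad}}(J,U)$.

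For the direction ``bad $\Rightarrow$ split'', I would work inside the model $I(J,U^{2,\eta^{\vee}})\cong T^0(U_{\mathbf t\eta^{\vee}})/T^2(U_{\mathbf t\eta^{\vee}})$ from Section \ref{ss realization sv ext} and introduce the $\mathbb{H}$-submodule $\widehat{N}_2 := \ker(\Delta^{2}_{\mathbf t\eta^{\vee}})/T^2 \subset T^0/T^2$. Using that $\Delta_{\mathbf t\eta^{\vee}}$ is $\mathbb{C}(\mathbf t)$-linear and the natural identification $I_1 = T^1/T^2 \cong I(J,U)$ via $\mathbf t$-shift, a direct computation shows $\widehat{N}_2\cap I_1 = N_1$ and that the image of $\widehat{N}_2$ under the projection $T^0/T^2 \twoheadrightarrow T^0/T^1 \cong I(J,U)$ coincides with $\mathrm{JF}^2_{\eta^{\vee}}(J,U)$. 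The badness hypothesis $\mathrm{JF}^1_{\eta^{\vee}}=\mathrm{JF}^2_{\eta^{\vee}}$ then forces $\widehat{N}_2+I_1=\widetilde{N}_2$ (the preimage of $N_2$ in $T^0/T^2$), and descending modulo $N_1$ produces the desired splitting $\widetilde{N}\cong L(J,U)\oplus N(J,U)$. The hard part here is the careful tracking of how $\Delta_{\mathbf t\eta^{\vee}}$ interacts with the $\mathbf t$-grading of $T^0(U_{\mathbf t\eta^{\vee}})$, relying on the compatibility $\Delta(\mathbf t y)=\mathbf t\Delta(y)$ together with the identification $\ker\Delta^U=N(J,U)$.

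For the direction ``split $\Rightarrow$ bad'', injectivity of $\mathrm{pr}^{*,1}$ provides a unique $F \in \mathrm{Ext}^1(L(J,U),L(J,U))$ with $\mathrm{pr}^{*,1}(F)=E$. If $F$ is the trivial extension then $E=0$ in $\mathrm{Ext}^1$, hence $\eta^{\vee}=0$, which is (trivially) bad. Otherwise, the composition $I(J,U^{2,\eta^{\vee}})\twoheadrightarrow E \twoheadrightarrow F$ exhibits $F$ as an indecomposable length-two quotient of $I(J,U^{2,\eta^{\vee}})$ whose two composition factors are both $L(J,U)$. Such $F$ satisfies conditions (a) and (b) of Theorem \ref{thm JF general stand}(3), so by the uniqueness in that theorem $F\cong L(J,U^{2,\eta^{\vee}})$. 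Then Theorem \ref{thm JF general stand}(2) with $r=2$ yields
\[ \mathrm{JF}^0_{\eta^{\vee}}(J,U)/\mathrm{JF}^2_{\eta^{\vee}}(J,U) \cong L(J,U^{2,\eta^{\vee}})/L(J,U) \cong L(J,U), \]
which equals $\mathrm{JF}^0_{\eta^{\vee}}(J,U)/\mathrm{JF}^1_{\eta^{\vee}}(J,U)$, forcing $\mathrm{JF}^1_{\eta^{\vee}}=\mathrm{JF}^2_{\eta^{\vee}}$. Part (2) now follows at once: under the hypothesis, Lemma \ref{lem nat isom ext} gives $\mathrm{Ext}^1_{\mathbb{H}}(I(J,U),I(J,U)) \cong V_J^{\vee,\bot}$, so the injection $\mathrm{pr}^{*,1}$ lands in $V_J^{\vee,\bot}$ with image $V^{\bot}_{\mathrm{bad}}(J,U)$ by part (1).
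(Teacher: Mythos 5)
Your proof is correct, and half of it follows the paper's own route: the long exact sequence attached to $0\to N(J,U)\to I(J,U)\to L(J,U)\to 0$, the vanishing $\mathrm{Hom}_{\mathbb H}(N(J,U),L(J,U))=0$, the realization of $\eta^{\vee}$ by $I(J,U^{2,\eta^{\vee}})$, and, for the inclusion $\mathrm{im}\,\mathrm{pr}^{*,1}\cap V_J^{\vee,\bot}\subseteq V^{\bot}_{\mathrm{bad}}(J,U)$, the identification of the resulting indecomposable length-two quotient with $L(J,U^{2,\eta^{\vee}})$ via the uniqueness in Theorem \ref{thm JF general stand}(3) and then parts (1)--(2) of that theorem (the paper runs this as a contradiction with a good direction; you argue it directly, with the same ingredients). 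Where you genuinely diverge is the converse inclusion: the paper uses Theorem \ref{thm JF general stand}(1)--(2) to see that $\mathrm{JF}^0_{\eta^{\vee}}(J,U^{2,\eta^{\vee}})/\mathrm{JF}^1_{\eta^{\vee}}(J,U^{2,\eta^{\vee}})$ is an indecomposable length-two module with both factors $L(J,U)$ and then builds two commutative diagrams of short exact sequences to exhibit an explicit preimage $\eta'$ with $\mathrm{pr}^{*,1}(\eta')=\eta^{\vee}$, whereas you characterize $\mathrm{im}\,\mathrm{pr}^{*,1}$ as the kernel of the restriction map $\mathrm{Ext}^1_{\mathbb H}(I(J,U),L(J,U))\to\mathrm{Ext}^1_{\mathbb H}(N(J,U),L(J,U))$ and kill the obstruction by hand inside $T^0(U_{\mathbf t\eta^{\vee}})/T^2$ using the submodule $\ker\Delta^{2}_{\mathbf t\eta^{\vee}}/T^2$, whose intersection with the submodule copy $T^1/T^2\cong I(J,U)$ is $N(J,U)$ (since $\mathrm{JF}^1_{\eta^{\vee}}(J,U)=\ker\Delta^U=N(J,U)$) and whose image in the quotient copy is $\mathrm{JF}^2_{\eta^{\vee}}(J,U)$, so that badness yields the splitting of the pulled-back extension. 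Both routes work; yours is more elementary for that direction (no appeal to Theorem \ref{thm JF general stand} there) and makes the homological mechanism transparent, at the cost of some bookkeeping you leave implicit (that $\ker\Delta^2_{\mathbf t\eta^{\vee}}$ is an $\mathbb H$-submodule containing $T^2$, the compatibility $\Delta(\mathbf t y)=\mathbf t\Delta(y)$, the identification $\mathrm{JF}^1_{\eta^{\vee}}(J,U)=N(J,U)$, and the undefined but inferable $N_2$, $\widetilde N_2$), while the paper's construction produces the extension module $\mathcal E(\eta')\cong L(J,U^{2,\eta^{\vee}})$ explicitly, which it reuses later. Part (2) is handled identically in both.
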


 \begin{proof}
We first consider (1). Consider the following short exact sequence:
\[    0 \rightarrow N(J,U) \rightarrow I(J, U) \rightarrow L(J,U) \rightarrow 0.
\]
This induces a long exact sequence of the following form:
\[ \ldots \rightarrow \mathrm{Hom}_{\mathbb{H}}(N(J,U), L(J,U)) \rightarrow \mathrm{Ext}^1_{\mathbb{H}}(L(J,U), L(J,U)) \stackrel{\mathrm{pr}^{*,1}}{\rightarrow} \mathrm{Ext}^1_{\mathbb{H}}(I(J,U), L(J,U)) \rightarrow \ldots
\]

Suppose $ \mathrm{im}\ \mathrm{pr}^{*,1}\cap V_J^{\vee, \bot} \setminus V_{\mathrm{bad}}^{\bot} \neq \emptyset$. Let $\eta^{\vee} \in  \mathrm{im}\ \mathrm{pr}^{*,1}\cap V_J^{\vee, \bot} \setminus V_{\mathrm{bad}}^{\bot}(J,U)$ and let $\eta' \in \mathrm{Ext}^1_{\mathbb{H}}(L(J,U), L(J,U))$ such that $\mathrm{pr}^{*,1}(\eta')=\eta^{\vee}$. Let $\mathcal E(\eta')$ be the $\mathbb{H}$-modules constructed from the Yoneda first extension (see e.g. \cite[Theorem 3.4.3]{We}) for $\eta'$ respectively. Recall that we are working for several identification. We now regard $\eta^{\vee}$ as an element in $\mathrm{Ext}^1_{\mathbb{H}}(I(J,U), I(J,U))$ and let $\mathcal E(\eta^{\vee})$ be the $\mathbb{H}$-module constructed from the Yoneda first extension for $\eta^{\vee}$.

From the construction of the Yoneda extension and tracing identifications, the map $I(J,U) \rightarrow L(J,U)$ being surjective implies that the induced map $\mathcal E(\eta^{\vee}) \rightarrow \mathcal E(\eta')$ is also surjective. This implies that there exists a subquotient of $\mathcal E(\eta^{\vee})$, in which all the compositions factors are isomorphic to $L(J,U)$. By the uniqueness statement in Theorem \ref{thm JF general stand}(2), $\mathrm{JF}^0_{\eta^{\vee}}(J, U^{2,\eta^{\vee}})/ \mathrm{JF}^1_{\eta^{\vee}}(J, U^{2,\eta^{\vee}})\cong L(J, U^{2, \eta^{\vee}})$ is isomorphic to $\mathcal E(\eta')$. 

On the other hand, by Theorem \ref{thm JF general stand}(1), the composition factors of $\mathrm{JF}^0_{\eta^{\vee}}(J, U^{2,\eta^{\vee}})/ \mathrm{JF}^1_{\eta^{\vee}}(J, U^{2,\eta^{\vee}})$ contains composition factors of $\mathrm{JF}^0_{\eta^{\vee}}(J, U)/ \mathrm{JF}^2_{\eta^{\vee}}(J, U)$. However, since $\mathrm{JF}^1_{\eta^{\vee}}(J, U) \neq \mathrm{JF}^2_{\eta^{\vee}}(J, U)$,  $\mathrm{JF}^0_{\eta^{\vee}}(J, U)/ \mathrm{JF}^2_{\eta^{\vee}}(J, U)$ contains a composition factor other than $L(J,U)$ and so does $L(J, U^{2, \eta^{\vee}})$. This gives a contradiction to the above conclusion that $L(J, U^{2, \eta^{\vee}}) \cong \mathcal E(\eta')$. This proves  $\mathrm{im}\ \mathrm{pr}^{*,1} \cap V^{\bot, \vee}_J \subset V_{\mathrm{bad}}^{\bot}(J,U)$.

For the converse inclusion, let $0 \neq \eta^{\vee} \in V_{\mathrm{bad}}^{\bot}$. Then 
\[ \mathrm{JF}_{\eta^{\vee}}^0(J,U)/\mathrm{JF}_{\eta^{\vee}}^2(J,U)=\mathrm{JF}_{\eta^{\vee}}^0(J,U)/\mathrm{JF}_{\eta^{\vee}}^1(J,U) \cong L(J,U) \]
 by the definition of bad directions. Hence, by Theorem \ref{thm JF general stand}(1),
\[ \mathrm{JF}^0_{\eta^{\vee}}(J, U^{2, \eta^{\vee}})/(\mathrm{JF}^{1}_{\eta^{\vee}}(J, U^{2,\eta^{\vee}})  + \mathrm{im} \iota) \cong L(J,U) ,\]
where $\iota$ is the natural embedding from $I(J,U)$ to $I(J,U^{2,\eta^{\vee}})$. We also have $\im \iota/(\im \iota \cap \mathrm{JF}^{1}_{\eta^{\vee}}(J, U^{2,\eta^{\vee}})) \cong L(J,U)$ by the definition of intertwining operators and the definition of $L(J,U)$ (see Theorem \ref{thm JF general stand}(1)). For simplicity, let $E=\mathrm{JF}^0_{\eta^{\vee}}(J, U^{2, \eta^{\vee}})/\mathrm{JF}^{1}_{\eta^{\vee}}(J, U^{2,\eta^{\vee}})$. The above facts imply that $E$ is an indecomposable module of length $2$ and all the composition factors isomorphic to $L(J,U)$. Since $I(J,U^{2,\eta^{\vee}}) \cong \mathrm{JF}^0_{\eta^{\vee}}(J,U^{2,\eta^{\vee}})$, we have a surjection from $I(J,U^{2,\eta^{\vee}})$ to $E$. Since $E$ has a unique submodule isomorphic to $L(J,U)$, the surjection map induces a commutative diagram:
\[\xymatrix{ 
                  0         \ar[r] & I(J,U)                  \ar[d]^{\mathrm{pr}'}    \ar[r]                       &          I(J,U^{2,\eta^{\vee}})  \ar[d]  \ar[r] & I(J,U) \ar[d]^{\mathrm{pr}} \ar[r] & 0                 \\
    0         \ar[r] & L(J,U)                       \ar[r]                   &          E   \ar[r] & L(J,U)  \ar[r] & 0                 \\ }
\]
Here $\mathrm{pr}'$ is a non-zero scalar multiple of $\mathrm{pr}$. Then we have a natural commutative diagram of the following form:
\[\xymatrix{ 
                  0         \ar[r] & I(J,U)/N(J,U)                  \ar[d]    \ar[r]                       &          I(J,U^{2,\eta^{\vee}})/N(J,U)  \ar[d]  \ar[r] & I(J,U) \ar[d]^{\mathrm{pr}}  \ar[r] & 0                 \\
    0         \ar[r] & L(J,U)                       \ar[r]                   &          E   \ar[r] & L(J,U)  \ar[r] & 0                 \\ }
\]
Recall that $I(J,U)/N(J,U) \cong L(J,U)$. Denote by $\eta' \in \mathrm{Ext}^1_{\mathbb{H}}(L(J,U),L(J,U))$ for the corresponding element of the bottom short exact sequence (under the Yoneda correspondence). It follows from definitions that $\mathrm{pr}^{*,1}(\eta')=\eta^{\vee}$. This completes the proof for (1).

For (2), we have the long exact sequence:
\[ \ldots \rightarrow \mathrm{Hom}_{\mathbb{H}}(N(J,U), L(J,U)) \rightarrow \mathrm{Ext}^1_{\mathbb{H}}(L(J,U), L(J,U)) \stackrel{\mathrm{pr}^{*,1}}{\rightarrow} \mathrm{Ext}^1_{\mathbb{H}}(I(J,U), L(J,U)) \rightarrow \ldots \]
Since $\mathrm{Hom}_{\mathbb{H}}(N(J,U), L(J,U))=0$, $\mathrm{pr^{*,1}}$ is injective and hence $\mathrm{Ext}^1_{\mathbb{H}}(L(J, U), L(J, U)) \cong \mathrm{im}\ \mathrm{pr}^{*,1}$. Combining with the result of (1) and assumptions, we obtain (2). 
\end{proof}

\section{First extensions and filtrations} \label{s first ext}

\subsection{First extensions}
We now summarize our study and state our main result concerning $\mathrm{Ext}^1_{\mathbb{H}}$ for some simple modules. 

\begin{theorem} \label{thm first ext sum}
Let $\mathbb{H}$ be the graded Hecke algebra as in Definition \ref{def graded affine}. Let $(J_1,U_1), (J_2, U_2) \in \Xi_L$ (Definition \ref{def generalized St}). Set $X=L(J_1,U_1)$ and let $Y=L(J_2,U_2)$ (see Definition \ref{def generalized St} for notations).  Then
\begin{enumerate}
\item If $\nu(Y) < \nu(X)$, then $\mathrm{Ext}^1_{\mathbb{H}}(X,Y)\cong \mathrm{Hom}_{\mathbb{H}}(N(J_1,U_1),Y)$.
\item If $\nu(X) < \nu(Y)$, then $\mathrm{Ext}^1_{\mathbb{H}}(X,Y) \cong \mathrm{Ext}^1_{\mathbb{H}}(Y,X) \cong \mathrm{Hom}_{\mathbb{H}}(N(J_2,U_2),X)$.
\item If $\nu(Y)$ and $\nu(X)$ are incomparable, then $\mathrm{Ext}^1_{\mathbb{H}}(X,Y)=0$.
\item Suppose $\nu(X)=\nu(Y)$  (and in particular $J_1=J_2$). Further suppose that 
\[\mathrm{Ext}^1_{\mathbb{H}_{J_1}^{ss}}(\mathrm{Res}_{\mathbb{H}_{J_1}^{ss}}U_1, \mathrm{Res}_{\mathbb{H}_{J_1}^{ss}}U_2)=0 .\]
\begin{enumerate}
\item If $U_1 \not\cong U_2$, then $\mathrm{Ext}^1_{\mathbb{H}}(X, Y)=0$. 
\item If $U_1 \cong U_2$, then $\mathrm{Ext}^1_{\mathbb{H}}(X, Y) \cong V_{\mathrm{bad}}^{\bot}(J_1, U_1)$. 
\end{enumerate}
\end{enumerate}
\end{theorem}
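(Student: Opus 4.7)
The plan is to handle the four cases by feeding the Langlands short exact sequence
\[ 0 \rightarrow N(J_i, U_i) \rightarrow I(J_i, U_i) \rightarrow L(J_i, U_i) \rightarrow 0 \]
into $\mathrm{Hom}_{\mathbb{H}}(-, L(J_j, U_j))$ and using Proposition \ref{prop ext lower langlands para} to kill the $\mathrm{Ext}$-terms involving a standard module, then invoking the $\bullet$-self-duality of simple modules (Lemma \ref{lem bullet dual ismo}) to reverse arrows where needed, and finally calling directly on Theorem \ref{thm jantan differential} for the diagonal part of the equal-$\nu$ case.

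Cases (1) and (3) both proceed from the long exact sequence obtained by applying $\mathrm{Hom}_{\mathbb{H}}(-, Y)$ to the Langlands sequence for $(J_1, U_1)$. Since $\nu(J_1, U_1) = \nu(X) \not\leq \nu(Y) = \nu(J_2, U_2)$ in both hypotheses, Proposition \ref{prop ext lower langlands para} kills $\mathrm{Hom}_{\mathbb{H}}(I(J_1, U_1), Y)$ and $\mathrm{Ext}^1_{\mathbb{H}}(I(J_1, U_1), Y)$, so the sequence collapses to $\mathrm{Ext}^1_{\mathbb{H}}(X, Y) \cong \mathrm{Hom}_{\mathbb{H}}(N(J_1, U_1), Y)$, which is (1). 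In the incomparable subcase of (3), Lemma \ref{lem decompose for lang class} forces every composition factor $Z$ of $N(J_1, U_1)$ to satisfy $\nu(Z) < \nu(X)$, so $Z \cong Y$ would give $\nu(Y) < \nu(X)$, contradicting incomparability; hence $\mathrm{Hom}_{\mathbb{H}}(N(J_1, U_1), Y) = 0$ and (3) follows. Part (2) is obtained by first running the (1)-argument with the roles of $(J_1, U_1)$ and $(J_2, U_2)$ interchanged, yielding $\mathrm{Ext}^1_{\mathbb{H}}(Y, X) \cong \mathrm{Hom}_{\mathbb{H}}(N(J_2, U_2), X)$, and then applying the $\bullet$-duality $\mathrm{Ext}^1_{\mathbb{H}}(X, Y) \cong \mathrm{Ext}^1_{\mathbb{H}}(Y^{\bullet}, X^{\bullet}) \cong \mathrm{Ext}^1_{\mathbb{H}}(Y, X)$, which holds since $\bullet$ is a contravariant anti-involution and both $X$ and $Y$ are self-$\bullet$-dual by Lemma \ref{lem bullet dual ismo}.

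For part (4), when $U_1 \cong U_2$ the conclusion is exactly Theorem \ref{thm jantan differential}(2). For $U_1 \not\cong U_2$ with $\nu(X) = \nu(Y)$, I write $U_i \cong \overline{U}_i \otimes L_i$ under $\mathbb{H}_{J_1} \cong \mathbb{H}_{J_1}^{ss} \otimes S(V_{J_1}^{\bot})$; equality of $\nu$'s forces $L_1 \cong L_2$, hence $\overline{U}_1 \not\cong \overline{U}_2$. The K\"{u}nneth formula (Lemma \ref{lem kuneth form}), together with the vanishing hypothesis on $\mathrm{Ext}^1_{\mathbb{H}_{J_1}^{ss}}(\overline{U}_1, \overline{U}_2)$ and $\mathrm{Hom}_{\mathbb{H}_{J_1}^{ss}}(\overline{U}_1, \overline{U}_2) = 0$, then gives $\mathrm{Hom}_{\mathbb{H}_{J_1}}(U_1, U_2) = \mathrm{Ext}^1_{\mathbb{H}_{J_1}}(U_1, U_2) = 0$. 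Via Frobenius reciprocity this rewrites $\mathrm{Hom}_{\mathbb{H}}(I(J_1, U_1), Y)$ and $\mathrm{Ext}^1_{\mathbb{H}}(I(J_1, U_1), Y)$ as $\mathrm{Hom}_{\mathbb{H}_{J_1}}$ and $\mathrm{Ext}^1_{\mathbb{H}_{J_1}}$ from $U_1$ into $\mathrm{Res}_{\mathbb{H}_{J_1}} Y$. Splitting $\mathrm{Res}_{\mathbb{H}_{J_1}} Y$ by $\mathbb{H}_{J_1}$-central character, Lemma \ref{lem decompose for lang class} applied to $I(J_1, U_2) \supseteq Y$ shows that the part sharing $U_1$'s central character has all composition factors isomorphic to $U_2$, while the complement is killed by Lemma \ref{lem cc conjugate}; a short induction along the filtration of the $U_2$-isotypic piece then delivers the required vanishing. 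The long exact sequence thus reduces $\mathrm{Ext}^1_{\mathbb{H}}(X, Y)$ to $\mathrm{Hom}_{\mathbb{H}}(N(J_1, U_1), Y)$, which vanishes by the same composition-factor comparison as in (3): $X \not\cong Y$, while every other composition factor of $N(J_1, U_1)$ has strictly smaller $\nu$ than $\nu(X) = \nu(Y)$.

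The most delicate point is the $U_1 \not\cong U_2$ part of (4): coordinating the K\"{u}nneth vanishing with the central-character decomposition of $\mathrm{Res}_{\mathbb{H}_{J_1}} Y$ and the induction along the isotypic filtration in order to simultaneously kill both $\mathrm{Hom}$ and $\mathrm{Ext}^1$ from the standard module into $Y$. Once this is in hand, together with Proposition \ref{prop ext lower langlands para}, Lemma \ref{lem bullet dual ismo}, and Theorem \ref{thm jantan differential}, the remaining cases reduce to routine bookkeeping with the Langlands sequence.
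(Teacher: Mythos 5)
Your proposal is correct and follows essentially the same route as the paper: the Langlands short exact sequence fed into $\mathrm{Hom}_{\mathbb{H}}(-,Y)$ with Proposition \ref{prop ext lower langlands para} for (1)--(3), $\bullet$-duality and Lemma \ref{lem bullet dual ismo} for (2), the K\"unneth-type vanishing for (4)(a), and Theorem \ref{thm jantan differential}(2) for (4)(b). The only divergence is cosmetic: in (4)(a) you re-derive by hand (via Shapiro's lemma and the central-character decomposition of $\mathrm{Res}_{\mathbb{H}_{J_1}}Y$) what the paper cites as Lemma \ref{lem nat isom ext}, and you make explicit the vanishing of $\mathrm{Hom}_{\mathbb{H}}(N(J_1,U_1),Y)$ that the paper leaves implicit, noting only that $Y$ there is a quotient, not a submodule, of $I(J_1,U_2)$, which does not affect the composition-factor argument.
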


\begin{proof}
 We have the following short exact sequence:
\[  0 \rightarrow N(J_1,U_1) \rightarrow I(J_1,U_1) \rightarrow L(J_1,U_1) \rightarrow 0 .\]
By applying the $\mathrm{Hom}_{\mathbb{H}}( , Y)$ functor, we have
\[ \ldots \rightarrow \mathrm{Hom}_{\mathbb{H}}(I(J_1,U_1), Y)  \rightarrow \mathrm{Hom}_{\mathbb{H}}(N(J_1,U_1), Y) \rightarrow \mathrm{Ext}_{\mathbb{H}}^1(L(J_1, U_1), Y) \rightarrow \mathrm{Ext}^1_{\mathbb{H}}(I(J_1,U_1), Y) \rightarrow \ldots
\]
We first consider (1) and (3) and so suppose $\nu(X) \not\leq \nu(Y)$. Then using Proposition \ref{prop ext lower langlands para}, we have $\mathrm{Ext}_{\mathbb{H}}^1(L(J_1, U_1), Y) \cong \mathrm{Hom}_{\mathbb{H}}(N(J_1, U_1), Y)$. This proves (1). For $\nu(Y)$ and $\nu(X)$ being incomparable, we also have $\mathrm{Hom}_{\mathbb{H}}(N(J_1, U_1), Y)=0$ by Lemma \ref{lem decompose for lang class}. This proves (3).

For (2), there is a natural isomorphism between $\mathrm{Ext}^1_{\mathbb{H}}(X,Y) \cong \mathrm{Ext}^1_{\mathbb{H}}(Y^{\bullet},X^{\bullet})$. (2) then follows from (1) and Lemma \ref{lem bullet dual ismo}.


We now consider (4).  Write $U_i =\overline{U}_i \otimes L_i$ as $\mathbb{H}_J \cong \mathbb{H}_J^{ss} \otimes S(V_J^{\bot})$-algebras, where $\overline{U}_i$ is $\mathbb{H}^{ss}_J$-tempered module and $L_i$ is a one-dimensional $S(V_J^{\bot})$-module. By Lemma \ref{lem nat isom ext}, we have
\[\mathrm{Ext}^1_{\mathbb{H}}(I(J_1, U_1), L(J_2,U_2)) \cong \mathrm{Ext}^1_{\mathbb{H}^{ss}_J}(\overline{U}_1,  \overline{U}_2) \otimes \mathrm{Hom}_{S(V_J^{\bot})}(L_1, L_2) \oplus\mathrm{Hom}_{S(V_J^{\bot})}(\overline{U}_1, \overline{U}_2) \otimes  \mathrm{Ext}^1_{S(V_J^{\bot})}(L_1, L_2) .\]
For (4)(a), by using $\mathrm{Hom}_{\mathbb{H}_J^{ss}}(\overline{U}_1, \overline{U}_2)=0$ and $\mathrm{Ext}^1_{\mathbb{H}^{ss}_J}(\overline{U}_1, \overline{U}_2)=0$, we have 
\[\mathrm{Ext}^1_{\mathbb{H}}(I(J_1,U_1), L(J_2, U_2)) \cong  0 .\]
This implies $\mathrm{Ext}^1_{\mathbb{H}}(L(J_1, U_1), L(J_2, U_2))=0$ by using a long exact sequence from the short exact sequence 
\[ 0 \rightarrow N(J_1,U_1) \rightarrow I(J_1,U_1) \rightarrow L(J_1,U_1) \rightarrow 0. \]

(4)(b) follows from Theorem \ref{thm jantan differential}(2).
\end{proof}

\begin{remark} \label{rmk relation with filt}
\begin{enumerate}
\item For Theorem \ref{thm first ext sum}(1) and (2), that is related to the second layer of the radical filtration of the corresponding standard module. For Theorem \ref{thm first ext sum}(4), that is related to the second layer of the Jantzen filtration. 
\item For Theorem \ref{thm first ext sum}(a), our approach only deals with under the assumption that $\mathrm{Ext}^1_{\mathbb{H}_J^{ss}}(\mathrm{Res}_{\mathbb{H}_J^{ss}}U_1, \mathrm{Res}_{\mathbb{H}_J^{ss}}U_2)=0$. Nevertheless, the assumption in Theorem \ref{thm first ext sum}(a) is satisfied by a range of examples. For example, if $\mathrm{Res}_{\mathbb{H}_J^{ss}}U_1$ is a discrete series, it is shown independently in \cite{Me}, \cite{OS} and \cite{Ch2} that $\mathrm{Ext}^1_{\mathbb{H}_J^{ss}}(\mathrm{Res}_{\mathbb{H}_J^{ss}}U_1, \mathrm{Res}_{\mathbb{H}_J^{ss}}U_2)=0$ ($U_2$ can be any arbitrary tempered modules). If $\mathrm{Res}_{\mathbb{H}_J^{ss}}U_1$ is elliptic and not a discrete series, then it can be checked from the result of \cite[Theorem 5.2]{OS2} that $\mathrm{Ext}_{\mathbb{H}_J^{ss}}^1(\mathrm{Res}_{\mathbb{H}_J^{ss}}U_1, \mathrm{Res}_{\mathbb{H}_J^{ss}}U_1) =0$ for a number of cases.
\end{enumerate}
\end{remark}

\subsection{Some computations on $\mathrm{Ext}$-groups}
In this section, we discuss some computations on $\mathrm{Ext}$-groups from results in this paper and \cite{Ch2} and some computations from \cite{Ci}.

\begin{example} \label{ex C3}
Here we give an example on computing $\mathrm{Ext}$-groups from our results. We shall assume a version of the Jantzen conjecture (see e.g. \cite[Conjecture 6.2.2]{BC})  to compute Jantzen filtrations from some Kazhdan-Lusztig polynomials \cite{Ci} (which uses \cite{Lu2, Lu3}). We mainly use to get information for the second layer of the Jantzen filtration.

Consider $\mathbb{H}$ of type $C_3$ as in \cite[Section 4.4]{Ci} and use the notation in that section. Denote by $I(4_b), I(3_{b,s})$, etc (resp. $L(4_a), L(3_{b,s})$, etc) the standard modules (resp. simple modules) associated to $4_a, 3_{b,s}$, etc respectively. Since $L(5_s)$ and $L(5_t)$ are discrete series, the $\mathrm{Ext}$-groups follow from \cite[Theorem 7.2]{Ch2}.

The standard module $I(4_b)$ satisfies the hypothesis in Theorem \ref{thm first ext sum}(4). Then by Theorem \ref{thm first ext sum}(4) and the Jantzen filtration (with assuming the truth of the conjecture), we have $\mathrm{Ext}^1_{\mathbb{H}}(L(4_a), L(4_b))=0$ and thus we have 
\[   \mathrm{Ext}^2_{\mathbb{H}}(L(4_a), L(4_a)) \cong \mathbb{C}, \quad \mathrm{Ext}^i_{\mathbb{H}}(L(4_a), L(4_a)) =0 \mbox{ for $i \neq 0,2$ } .
\]
We also have
\[  \mathrm{Ext}^i_{\mathbb{H}}(L(4_a), L(5_s)) =\mathrm{Ext}^i_{\mathbb{H}}(L(4_a), L(5_t))=0 \mbox{ for $i \neq 0$ } ,\]
and
\[   \mathrm{Ext}^1_{\mathbb{H}}(L(4_a), L(5_s)) \cong \mathrm{Ext}^1_{\mathbb{H}}(L(4_a), L(5_t)) \cong \mathbb{C}.
\]
We now turn to $3_{b,s}$. There are two possible radical filtration based on the Jantzen filtration. Suppose $\mathrm{rad}^1(I(3_{b,s})) \cong L(4_a) \oplus L(5_s) $. For such case, applying the $\mathrm{Hom}_{\mathbb{H}}(., L(5_s))$-functor and using Proposition \ref{prop ext lower langlands para}, we have $\mathrm{Ext}^2_{\mathbb{H}}(L(3_{b,s}), L(4_a)) \cong \mathrm{Ext}^2_{\mathbb{H}}(L(4_a)\oplus L(5_s), L(5_s)) \cong \mathbb{C}$. Now by applying the duality \cite[Theorem 4.15]{Ch2}, we have $\mathrm{Ext}^1_{\mathbb{H}}(L(4_a), L(4_b))\cong \mathbb{C}$ which contradicts to Theorem \ref{thm first ext sum} and the data in \cite[Section 4.4]{Ci}. Thus we can only have
\[ \mathrm{rad}^1(I(3_{b,s}))/\mathrm{rad}^2(I(3_{b,s})) \cong L(4_a), \quad  \mathrm{rad}^2(I(3_{b,s}))/\mathrm{rad}^3(I(3_{b,s})) \cong L(5_s).
\]
Then by standard homological algebra, we have
\begin{align} \label{eqn ext for pairs}
 \mathrm{Ext}^1_{\mathbb{H}}(L(3_{b,s}), L(4_a)) \cong \mathrm{Ext}^2_{\mathbb{H}}(L(3_{b,s}), L(5_t)) \cong \mathbb{C} 
\end{align}
and
\[ \mathrm{Ext}^i_{\mathbb{H}}(L(3_{b,s}), L(4_a)) = \mathrm{Ext}^j_{\mathbb{H}}(L(3_{b,s}), L(5_t))=\mathrm{Ext}^k_{\mathbb{H}}(L(3_{b,s}), L(5_s)) =0 \]
for all $i,j$ not as in (\ref{eqn ext for pairs}). By Theorem \ref{thm first ext sum} and Jantzen filtrations, we have 
\[ \mathrm{Ext}^1_{\mathbb{H}}(L(3_{b,s}), L(3_{b,s}))=0. \]
By using Theorem \ref{thm first ext sum}, we have
\[ \mathrm{Ext}^1_{\mathbb{H}}(L(3_{b,s}), L(3_{b,s})) \cong  \mathrm{Ext}^1_{\mathbb{H}}(L(3_{b,s}), L(4_b)) =0. \]

We now compute $\mathrm{Ext}^i_{\mathbb{H}}(L(3_{b,t}), L(3_{b,t}))$. By Theorem \ref{thm first ext sum}, 
\[\mathrm{Ext}^1_{\mathbb{H}}(L(3_{b,t}), L(3_{b,t})) \cong 0 ,\]
\[\mathrm{Ext}^2_{\mathbb{H}}(L(3_{b,t}), L(3_{b,t})) \cong \mathrm{Ext}^1_{\mathbb{H}}(L(3_{b,t}), L(2_{b})) \cong 0 ,\]
\[\mathrm{Ext}^3_{\mathbb{H}}(L(3_{b,t}), L(3_{b,t})) \cong \mathrm{Ext}^1_{\mathbb{H}}(L(3_{b,t}), L(2_{b})) \cong 0 .\]
By using the duality \cite[Theorem 4.15]{Ch2}, we also have 
\[ \mathrm{Ext}^1_{\mathbb{H}}(L(2_b), L(2_b))= \mathrm{Ext}^2_{\mathbb{H}}(L(2_b), L(2_b))=0 .\]
This is also compatible with the result of Theorem \ref{thm first ext sum}(4) and the Jantzen filtration of the first layer for $I(2_b)$.

Other pairs can be computed by similar manner, or using suitable duality to reduce to known $\mathrm{Ext}$-groups. We have also checked many cases that the resulting $\mathrm{Ext}$-groups and the structure of standard modules in terms of the (conjectured) Jantzen filtration are compatible. 

\end{example}



\begin{remark}
We remark that applying the Kazhdan-Lusztig polynomials and the Jantzen conjecture, one expect to obtain information for a generic vector in $V_{\mathrm{bad}}^{\bot}$ (which is also assumed in Example \ref{ex C3}). Thus it is easier to determine $\mathrm{Ext}^1_{\mathbb{H}}(L(J,U), L(J,U))$ for $|J|=|\Pi|-1$. For cases of larger $J$, one sometimes needs some more information.
\end{remark}

\begin{example} \label{ex yoneda product}
Here we show that one can obtain some other structure for the extension algebra from Theorem \ref{thm JF general stand}. We use the notation in Example \ref{ex C3}. We shall show that the Yoneda product 
\begin{align} \label{eqn yoneda prod} \mathrm{Ext}^1_{\mathbb{H}}(L(5_s), L(4_a)) \otimes \mathrm{Ext}^1_{\mathbb{H}}(L(4_a), L(5_s)) \rightarrow \mathrm{Ext}^2_{\mathbb{H}}(L(4_a), L(4_a)) 
\end{align}
is a non-zero map. 

Let $X= I(4_a)/L(5_t)$. We consider the short exact sequence:
\[   0\rightarrow L(5_s) \rightarrow X \rightarrow L(4_a) \rightarrow 0 .
\]
By applying the $\mathrm{Hom}_{\mathbb{H}}(., L(4_a))$ functor, we have the long exact sequence
\[ \ldots \rightarrow \mathrm{Ext}^1_{\mathbb{H}}(L(4_a), L(4_a)) \rightarrow \mathrm{Ext}^1_{\mathbb{H}}(X, L(4_a)) \rightarrow \mathrm{Ext}^1_{\mathbb{H}}(L(5_s), L(4_a)) \stackrel{\partial}{\rightarrow} \mathrm{Ext}^2_{\mathbb{H}}(L(4_a), L(4_a)) \rightarrow \ldots 
\]
The map $\partial$ coincides with the Yoneda product in (\ref{eqn yoneda prod}). Hence if $\partial$ is zero, we have $\mathrm{Ext}^1_{\mathbb{H}}(X, L(4_a)) \cong \mathbb{C}$. Then by comparing dimensions, the natural surjective map $I(4_a) \rightarrow X$ induces an isomorphism $\mathrm{Ext}^1_{\mathbb{H}}(X, L(4_a)) \cong \mathrm{Ext}^1_{\mathbb{H}}(I(4_a), L(4_a))$. Now considering the Yoneda construction of the modules for $\mathrm{Ext}^1_{\mathbb{H}}(X, L(4_a))$ and $\mathrm{Ext}^1_{\mathbb{H}}(I(4_a), L(4_a))$, we obtain a module $L(J,U^{2,\eta^{\vee}})$ which does not have a unique simple module. This gives a contradiction and hence we have the Yoneda product to be non-zero. Here $(J,U) \in \Xi_L$ such that $L(J,U) =L(4_a)$. 
\end{example}




\section{Appendix A: Jantzen filtration for an example of type $B_2$ }

We keep using the notation in Example \ref{ss remark filt}. Fix a basis $\left\{ u_1, u_2 \right\}$ of $U$ such that the action of $S(V)$ on $\phi(U_{\mathbf t\nu^{\vee}})$ in matrix form with respect to the basis $\left\{ \phi(u_1), \phi(u_2) \right\}$ is as follows:
\[ \alpha =  \begin{bmatrix} 0 & 0 \\ 1 & 0 \end{bmatrix}  \]
 and 
\[\beta = \begin{bmatrix} -(2+2\mathbf t) & 0 \\ -1 & -(2+2\mathbf t) \end{bmatrix} .\]
We also have
\[ (2\alpha + \beta)^2-4= 4\begin{bmatrix} \mathbf t(2+\mathbf t)& 0 \\ -(1+\mathbf t) & \mathbf t(2+\mathbf t) \end{bmatrix} 
\]
and
\[ (\alpha+\beta)^2-4 = 4\begin{bmatrix}\mathbf t(2+\mathbf t) & 0 \\ 0 &\mathbf t(2+\mathbf t)  \end{bmatrix} 
\]
and
\[ \beta^2-4 = 4\begin{bmatrix} \mathbf t(2+\mathbf t) & 0 \\ (1+\mathbf t) & \mathbf t(2+\mathbf t) \end{bmatrix} .
\]

We now regard $\phi(u_1)$ and $\phi(u_2)$ as elements in $\phi(U_{\mathbf t\nu^{\vee}})$. Set $u^1_{\mathbf t}= \beta^{-1} (\alpha+\beta)^{-1} (2\alpha+\beta)^{-1} \phi(u_1) \in \phi(U_{\mathbf t\nu^{\vee}})$ and $u^2_{\mathbf t}=\beta^{-1} (\alpha+\beta)^{-1} (2\alpha+\beta)^{-1} \phi(u_2) \in \phi(U_{\mathbf t\nu^{\vee}})$. Note that $1 \otimes u^1_{\mathbf t}$ and $1 \otimes u^2_{\mathbf t}$ are holomorphic and both of them specialized at $\mathbf t=0$ are nonzero. 

Now we compute the image of $\Delta_{\mathbf t\nu^{\vee}}$ of the following elements.
\begin{align*}
\Delta_{\mathbf t\nu^{\vee}}(1 \otimes u_1) &= \widetilde{\tau}_{s_{\beta}s_{\alpha}s_{\beta}} \otimes u^1_{\mathbf t} \\
\Delta_{\mathbf t\nu^{\vee}}(1 \otimes u_2) &= \widetilde{\tau}_{s_{\beta}s_{\alpha}s_{\beta}} \otimes  u^2_{\mathbf t} \\
\Delta_{\mathbf t\nu^{\vee}}(\widetilde{\tau}_{s_{\beta}} \otimes u_1) &= 4\mathbf t(2+\mathbf t)\widetilde{\tau}_{s_{\alpha}s_{\beta}} \otimes u^1_{\mathbf t}-4(1+\mathbf t)\widetilde{\tau}{s_{\alpha}s_{\beta}} \otimes  u^2_{\mathbf t}  \\
\Delta_{\mathbf t\nu^{\vee}}(\widetilde{\tau}_{s_{\beta}} \otimes u_2) &= 4\mathbf t(1+\mathbf t)\widetilde{\tau}_{s_{\alpha}s_{\beta}} \otimes  u^2_{\mathbf t}  \\
\Delta_{\mathbf t\nu^{\vee}}(\widetilde{\tau}_{s_{\alpha}s_{\beta}} \otimes u_1) &= 16\mathbf t^2(2+\mathbf t)^2\widetilde{\tau}_{s_{\beta}} \otimes u^1_{\mathbf t}-16\mathbf t(1+\mathbf t)(2+\mathbf t)\widetilde{\tau}_{s_{\beta}} \otimes  u^2_{\mathbf t}  \\
\Delta_{\mathbf t\nu^{\vee}}(\widetilde{\tau}_{s_{\alpha}s_{\beta}} \otimes u_2) &= 16\mathbf t^2(2+\mathbf t)^2\widetilde{\tau}_{s_{\beta}} \otimes u^1_{\mathbf t}  \\
\Delta_{\mathbf t\nu^{\vee}}(\widetilde{\tau}_{s_{\beta}s_{\alpha}s_{\beta}} \otimes u_1) &= 64\mathbf t^3(2+\mathbf t)^3 \otimes u_{\mathbf t}^1 \\
\Delta_{\mathbf t\nu^{\vee}}(\widetilde{\tau}_{s_{\beta}s_{\alpha}s_{\beta}} \otimes u_2) &= 64\mathbf t^3(2+\mathbf t)^3 \otimes u_{\mathbf t}^2 
\end{align*}

Note that the image of $1 \otimes u_1,\ 1 \otimes u_2,\ \widetilde{\tau}_{s_{\beta}} \otimes u_1$ span $\mathrm{JF}^0_{\nu^{\vee}}/\mathrm{JF}^1_{\nu^{\vee}}$. Similarly the image of 
$\widetilde{\tau}_{s_{\alpha}s_{\beta}} \otimes u_1$ spans $\mathrm{JF}^1_{\nu^{\vee}}/\mathrm{JF}^2_{\nu^{\vee}}$. The image of $\widetilde{\tau}_{s_{\beta}} \otimes u_2+\mathbf t\widetilde{\tau}_{s_{\beta}} \otimes u_1$ spans $\mathrm{JF}^2_{\nu^{\vee}}/\mathrm{JF}^3_{\nu^{\vee}}$. The image of $\widetilde{\tau}_{s_{\alpha}s_{\beta}}  \otimes u_2+2\mathbf t\widetilde{\tau}_{s_{\alpha}s_{\beta}} \otimes u_1,\ \widetilde{\tau}_{s_{\beta}s_{\alpha}s_{\beta}} \otimes u_1,\ \widetilde{\tau}_{s_{\beta}s_{\alpha}s_{\beta}} \otimes u_2$ spans $\mathrm{JF}^3_{\nu^{\vee}}/\mathrm{JF}^4_{\nu^{\vee}}$.

\section{Appendix B: Proof of Proposition \ref{prop wv limit form}}

In the following proofs, we assume the reader is familiar with the standard properties of Bruhat-Chevalley ordering (see e.g. \cite{Hu}). Some facts are used without mentioning explicitly. Define $l: W \rightarrow \mathbb{Z}_{\geq 0}$ be the length function on $W$. 

\begin{lemma} \label{lem various l case a}
Let $w \in W^J$. Let $s_{\alpha}$ be a simple reflection. Then we have one of the following cases:
\begin{enumerate}
\item $l(s_{\alpha}w)=l(w)+1$ and $s_{\alpha}w \in W^J$. In this case, $w^{-1}(\alpha) \in R \setminus R_J$. Furthermore, $(t_{s_{\alpha}}\alpha-k_{\alpha})\tau_w=\tau_{s_{\alpha}w}w^{-1}(\alpha)$ and $t_{s_{\alpha}} \tau_w= \tau_{s_{\alpha}w}+k_{\alpha} \tau_w w^{-1}(\alpha)^{-1}$.  
\item $l(s_{\alpha}w)=l(w)+1$ and $s_{\alpha}w \notin W^J$. In this case, $s_{\alpha}w=ws_{\alpha'}$ for some $\alpha' \in J$. Furthermore $t_{s_{\alpha}}\widetilde{\tau}_w =\widetilde{\tau}_w t_{s_{\alpha'}}$ and $(t_{s_{\alpha}}\alpha-k_{\alpha})\widetilde{\tau}_w=\widetilde{\tau}_{w}(t_{s_{\alpha'}}\alpha'-k_{\alpha'})$.  
\item $l(s_{\alpha}w)=l(w)-1$. In this case, $s_{\alpha}w \in W^J$ and $w^{-1}(\alpha) \in R \setminus R_J$. Furthermore, $(t_{s_{\alpha}}\alpha-k_{\alpha})\tau_w=\tau_{s_{\alpha}w} (w^{-1}(\alpha)^2-k_{\alpha}^2)w^{-1}(\alpha)^{-1}$ and $t_{s_{\alpha}} \tau_w= \tau_{s_{\alpha}w}(w^{-1}(\alpha)^2-k_{\alpha}^2)w^{-1}(\alpha)^{-2}+k_{\alpha} \widetilde{\tau}_w w^{-1}(\alpha)^{-1}$.  
\end{enumerate}
\end{lemma}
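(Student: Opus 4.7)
The proof splits cleanly by case analysis driven by the sign of $w^{-1}(\alpha)$ and whether $s_\alpha w$ lies in $W^J$, so I would begin by pinning down the combinatorial content of the three cases before touching any intertwining element. The key preliminary fact is the standard characterization that $u \in W^J$ iff $u$ preserves positivity of $R_J^+$, equivalently $u(\beta)>0$ for every simple root $\beta \in J$. In case (1), $s_\alpha w \in W^J$ together with $w \in W^J$ forces $\alpha \notin w(J)$; then, if $w^{-1}(\alpha) \in R_J^+$, writing it as a non-negative combination of simple roots in $J$ and using that $\alpha$ itself is simple collapses the combination to a single simple root in $w(J)$, a contradiction. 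In case (2), $l(s_\alpha w)=l(w)+1$ gives $w^{-1}(\alpha)>0$, and $s_\alpha w \notin W^J$ forces $\alpha \in w(J)$, so $w^{-1}(\alpha)=\alpha'$ for a unique $\alpha'\in J$; the identity $s_\alpha w = w s_{\alpha'}$ follows from $s_\alpha = w s_{\alpha'} w^{-1}$. In case (3), $w^{-1}(\alpha)<0$: if $-w^{-1}(\alpha) \in R_J^+$, the $W^J$-property of $w$ would send it to a positive root, but $w(-w^{-1}(\alpha))=-\alpha<0$; and $s_\alpha w \in W^J$ since otherwise $\alpha = w(\alpha')$ for $\alpha'\in J$ would force $w^{-1}(\alpha)>0$.

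The intertwining formulas in cases (1) and (3) then follow by combining two ingredients: first, the well-known behaviour of inversion sets, $R(s_\alpha w)=R(w)\sqcup\{w^{-1}(\alpha)\}$ when $l(s_\alpha w)=l(w)+1$ and $R(w)=R(s_\alpha w)\sqcup\{-w^{-1}(\alpha)\}$ when $l(s_\alpha w)=l(w)-1$; second, the commutation $v\tau_w=\tau_w w^{-1}(v)$ from Lemma \ref{lem basic comm lem}, which lets one move scalar factors across $\tau_w$. For case (1), concatenating the reduced expression gives $(t_{s_\alpha}\alpha-k_\alpha)\widetilde{\tau}_w=\widetilde{\tau}_{s_\alpha w}$, and then one reintroduces the normalizing product $\prod_{\beta\in R(\cdot)}\beta^{-1}$ using the above $R$-identity to obtain the stated formula for $(t_{s_\alpha}\alpha-k_\alpha)\tau_w$; the formula for $t_{s_\alpha}\tau_w$ is then extracted by writing $t_{s_\alpha}\alpha=(t_{s_\alpha}\alpha-k_\alpha)+k_\alpha$ and applying $\alpha\tau_w=\tau_w w^{-1}(\alpha)$. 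For case (3) the same scheme applies, but the new input is the quadratic identity $(t_{s_\alpha}\alpha-k_\alpha)^2=k_\alpha^2-\alpha^2$, verified from $t_{s_\alpha}^2=1$ and the Hecke relation $\alpha t_{s_\alpha}=-t_{s_\alpha}\alpha+2k_\alpha$; applied to the reduced expression $w=s_\alpha\cdot(s_\alpha w)$ it produces the rational factor $w^{-1}(\alpha)^2-k_\alpha^2$ after moving the squared polynomial across $\tau_{s_\alpha w}$ via Lemma \ref{lem basic comm lem}.

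For case (2), both $s_\alpha\cdot w$ and $w\cdot s_{\alpha'}$ are reduced expressions of the same element, so Proposition \ref{prop indep cho base} (independence of reduced expression for $\widetilde{\tau}$) immediately yields $(t_{s_\alpha}\alpha-k_\alpha)\widetilde{\tau}_w=\widetilde{\tau}_w(t_{s_{\alpha'}}\alpha'-k_{\alpha'})$. The main obstacle is upgrading this to $t_{s_\alpha}\widetilde{\tau}_w=\widetilde{\tau}_w t_{s_{\alpha'}}$: expanding both sides of the previous identity, using $\alpha\widetilde{\tau}_w=\widetilde{\tau}_w\alpha'$ (which follows from $\alpha\tau_w=\tau_w\alpha'$ since the normalizing scalars are central) together with $k_\alpha=k_{\alpha'}$ (since $\alpha$ and $\alpha'=w^{-1}(\alpha)$ are $W$-conjugate), reduces the desired identity to $\bigl(t_{s_\alpha}\widetilde{\tau}_w-\widetilde{\tau}_w t_{s_{\alpha'}}\bigr)\alpha'=0$. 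To cancel the factor $\alpha'$ one cannot simply divide inside $\mathbb{H}_J$; instead I would appeal to the PBW decomposition $\mathbb{H}_J=\bigoplus_{u\in W_J}t_u\,S(V)$ as a right $S(V)$-module, under which right multiplication by any nonzero polynomial is injective, so the bracketed expression must vanish. This freeness argument (and its natural extension to the localization $\mathcal{H}_J$ guaranteed by Lemma \ref{lem rat sturcture}) is the only non-routine step in the proof.
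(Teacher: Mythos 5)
Your proof is correct, and on the one step where the paper actually supplies an argument you take a genuinely different route. The paper dismisses (1) as ``following from definitions,'' handles the combinatorics of (2) via inversion sets and the unique factorization of $W$ into $W^J\cdot W_J$, obtains $(t_{s_{\alpha}}\alpha-k_{\alpha})\widetilde{\tau}_w=\widetilde{\tau}_w(t_{s_{\alpha'}}\alpha'-k_{\alpha'})$ from the proof of Kriloff--Ram's Proposition 2.5 exactly as you do, but then proves the upgrade $t_{s_{\alpha}}\widetilde{\tau}_w=\widetilde{\tau}_w t_{s_{\alpha'}}$ by checking that both sides act identically on a faithful $\mathbb{H}$-module taken from the proof of Kriloff--Ram's Proposition 2.8(e); for (3) it records only the combinatorial fact $s_{\alpha}w\in W^J$. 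Your treatment is more explicit and self-contained: the case (1) and (3) formulas are derived from the inversion-set identities, the commutation $v\tau_w=\tau_w w^{-1}(v)$, and the quadratic relation $(t_{s_{\alpha}}\alpha-k_{\alpha})^2=k_{\alpha}^2-\alpha^2$ (all of which check out and reproduce the stated formulas), and in (2) you replace the faithful-module verification by an algebraic cancellation from $\bigl(t_{s_{\alpha}}\widetilde{\tau}_w-\widetilde{\tau}_w t_{s_{\alpha'}}\bigr)\alpha'=0$, using $\alpha\widetilde{\tau}_w=\widetilde{\tau}_w\alpha'$ and $k_{\alpha}=k_{\alpha'}$. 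The one correction needed there is that the freeness you invoke must be that of the full algebra, $\mathbb{H}=\bigoplus_{u\in W}t_u\,S(V)$ as a right $S(V)$-module (the standard basis theorem for graded Hecke algebras), not of $\mathbb{H}_J=\bigoplus_{u\in W_J}t_u\,S(V)$: the elements $t_{s_{\alpha}}\widetilde{\tau}_w$ and $\widetilde{\tau}_w t_{s_{\alpha'}}$ involve $t_u$ with $u\notin W_J$, so they do not lie in $\mathbb{H}_J$. With that replacement the cancellation is valid, and your argument buys independence from the external faithful-module input, at the cost of a slightly longer computation than the paper's citation. Incidentally, your scheme in (3) yields $k_{\alpha}\tau_w w^{-1}(\alpha)^{-1}$ as the last term of the formula for $t_{s_{\alpha}}\tau_w$; the $\widetilde{\tau}_w$ printed there in the statement appears to be a typo, so the discrepancy is not a flaw in your argument.
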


\begin{proof}
(1) follows from definitions (and some details are similar to (2) below). 

For (2), suppose $s_{\alpha}w \notin W^J$. Let $R(w) = \left\{ \beta \in R^+ : w(\beta)<0 \right\}$. Since $R(w) \subset R(s_{\alpha}w)$ and $R(w) \cap R_J=\emptyset$, $|R(s_{\alpha}) \cap R_J| \leq 1$. The condition that $s_{\alpha}w \notin W^J$ implies that $|R(s_{\alpha}) \cap R_J| = 1$. By a unique factorization of an element into the product of an element in $W^J$ and an element in $W_J$, we have $s_{\alpha}w=ws_{\alpha'}$ for some $\alpha' \in \Pi$. The assertion that $(t_{s_{\alpha}}\alpha-k_{\alpha})\widetilde{\tau}_w=\widetilde{\tau}_{w}(t_{s_{\alpha'}}\alpha'-k_{\alpha'})$ follows again from  the proof of \cite[Proposition 2.5]{KR} (c.f. Proposition \ref{prop indep cho base}). Since $w^{-1}(\alpha)=\alpha'$, we also have $\alpha \widetilde{\tau}_w=\widetilde{\tau}_w\alpha'$ (see Lemma \ref{lem basic comm lem}). Then one can verify that the action of $t_{s_{\alpha}}\widetilde{\tau}_w$ and $\widetilde{\tau}_w t_{s_{\alpha'}}$ is the same on a faithful $\mathbb{H}$-module described in the proof of \cite[Proposition 2.8(e)]{KR}. Hence $t_{s_{\alpha}}\widetilde{\tau}_w=\widetilde{\tau}_w t_{s_{\alpha'}}$.

For (3), since $l(s_{\alpha}w)=l(w)-1$, $R(s_{\alpha}w) \subset R(w)$ and so $s_{\alpha}w \in W^J$. 
\end{proof}

\noindent
{\it Proof of Proposition \ref{prop wv limit form}}

Let $w \in W^J$. Let
\[  \Lambda(w, \gamma^{\vee}) = \left\{ \lambda^{\vee} \in \mathrm{Wgt}(U): w(\lambda^{\vee})=\gamma^{\vee} \mbox{ for some $\lambda^{\vee}\in \mathrm{Wgt}(U) $}\right\}.
\]


For each $\lambda^{\vee} \in \mathrm{Wgt}(U)$, let $u_1, \ldots, u_{r_{\lambda}^{\vee}}$ form a basis of the generalized $\lambda^{\vee}$-weight space and regard those elements in $1 \otimes U \subset U_{\mathbf a}$.  

For each $w_1 \in W^J$ and $\lambda^{\vee}_1 \in \mathrm{Wgt}(U)$ such that$\gamma^{\vee}=w_1(\lambda^{\vee}_1)$, and for each $k=1, \ldots, r_{\lambda_1^{\vee}}$, we shall construct vectors $x_{\mathbf a} \in \widetilde{U}$ of the form
\begin{align} \label{eqn phi form}  x_{\mathbf a}= \tau_{w_1} p_{w_1,\lambda_1^{\vee}, k} \otimes u_{\lambda_1^{\vee}, k} +\sum_{w \in W(J,U, \gamma^{\vee}), w_1 > w} \sum_{\lambda^{\vee} \in \Lambda(w,\gamma^{\vee})} \sum_{i=1}^{r_{\lambda^{\vee}}} \tau_w q_{w,\lambda^{\vee}, i} \otimes u_{\lambda^{\vee},i}
\end{align}
with 
\begin{enumerate}
\item[(i)] $x_{\mathbf a}$ is holomorphic;
\item[(ii)] $p_{w_1,\lambda_1^{\vee},k}\in S(V) \subset \mathcal O(J)$;
\item[(iii)] $x_{\mathbf a}|_{\mathbf a=0}$ has weight $\gamma^{\vee}$;
\item[(iv)] $\lambda^{\vee}_1(p_{w_1,\lambda_1^{\vee},k}) \neq 0$.
\end{enumerate}

 By the definition of $|_{\mathbf a=0}$ and property (iv) above, we see that $x$ is a non-zero scalar multiple of an element of the form 
\begin{align*} t_{w_1} \otimes u_{\lambda_1^{\vee},k}+\sum_{w \in W^J, w_1>w} t_w \otimes u_w 
\end{align*}
for some $u_w \in U$. 

Fix $\lambda^{\vee} \in \mathrm{Wgt}(U)$. For $w=1 \in W^J$, there is nothing to prove. Let $w_1 \in W^J$ and with $w' \neq 1$ and let $w_1=s_{\alpha_1}\ldots s_{\alpha_r}$ be a reduced expression of $w_1$. Then $w_2=s_{\alpha_2} \ldots s_{\alpha_r}$, which is also in $W^J$ by definitions. By our inductive construction, we can assume there exists an element $x_{\mathbf a}$ the form (\ref{eqn phi form}) starting with the term $\tau_{w_2} p_{w_2,\lambda_1^{\vee}, k} \otimes u_{\lambda_1^{\vee}, k}$  satisfying properties (i) to (iii). 

Here we divide into few cases. Before that, set $\alpha=\alpha_1$ for simplicity of notations. For the first case, suppose $s_{\alpha}(w_2(\lambda^{\vee}))=w_2(\lambda^{\vee})$, equivalently $w_2(\lambda^{\vee})(\alpha)=\alpha$. In this case, set $\widetilde{x}_{\mathbf a}=t_{s_{\alpha}}x_{\mathbf a}$. Note that $t_{s_{\alpha}}.\widetilde{x}_{\mathbf a}$ is also holomorphic and $(t_{s_{\alpha}} \widetilde{x}_{\mathbf a})|_{\mathbf a=0}=t_{s_{\alpha}}(\widetilde{x}|_{\mathbf a=0})$. Set $\gamma^{\vee}=w_2(\lambda^{\vee})$. We also have 
\begin{align}
(v-\gamma^{\vee}(v))t_{s_{\alpha}} (\widetilde{x}_{\mathbf a})|_{\mathbf a=0}&=t_{s_{\alpha}}(s_{\alpha}(v)-\gamma^{\vee}(v))(\widetilde{x}_{\mathbf a})|_{\mathbf a=0}+\alpha^{\vee}(v)(\widetilde{x}_{\mathbf a})|_{\mathbf a=0} \\
                                                      &=t_{s_{\alpha}}(s_{\alpha}(v)-\gamma^{\vee}(s_{\alpha}(v)))(\widetilde{x}_{\mathbf a})|_{\mathbf a=0}+\alpha^{\vee}(v)(\widetilde{x}_{\mathbf a})|_{\mathbf a=0} \quad \mbox{ (by $s_{\alpha}(\gamma^{\vee})=\gamma^{\vee}$)} 
\end{align}
and so $(v-\gamma^{\vee}(v))^lt_{s_{\alpha}} (\widetilde{x}_{\mathbf a}|_{\mathbf a=0})=0$ for sufficiently large $l$. Thus $t_{s_{\alpha}}(\widetilde{x}_{\mathbf a})|_{\mathbf a=0}$ is a generalized weight vector with the weight $w_1(\lambda^{\vee})=w_2(\lambda^{\vee})$. This shows that $\widetilde{x}_{\mathbf a}$ satisfies property (iii).

We now rewrite $\widetilde{x}_{\mathbf a}$ to the form as in (\ref{eqn phi form}). We consider the leading term $t_{s_{\alpha}} \tau_{w_2} p_{w_2,\lambda_1^{\vee}, k} \otimes u_{\lambda_1^{\vee}, k}$ and write as
\begin{align*}
 & t_{s_{\alpha}} \tau_{w_2} p_{w_2,\lambda_1^{\vee}, k} \otimes u_{\lambda_1^{\vee}, k} \\
=& t_{s_{\alpha}}\alpha \tau_{w_2} (w_2^{-1}(\alpha)^{-1}p_{w_2,\lambda_1^{\vee}, k}) \otimes u_{\lambda_1^{\vee}, k} \quad \mbox{ ($s_{\alpha}w_2 \in W^J$ implies $w_2^{-1}(\alpha)\in R \setminus R_J$)} \\
=& (t_{s_{\alpha}}\alpha - k_{\alpha}) \tau_{w_2} (w_2^{-1}(\alpha)^{-1}p_{w_2,\lambda_1^{\vee}, k}) \otimes u_{\lambda_1^{\vee}, k} +k_{\alpha} \tau_{w_2} (w_2^{-1}(\alpha)^{-1}p_{w_2,\lambda_1^{\vee}, k}) \otimes u_{\lambda_1^{\vee}, k} \\
=&  \tau_{s_{\alpha_1}w_2} p_{w_2,\lambda_1^{\vee}, k} \otimes u_{\lambda_1^{\vee}, k} +k_{\alpha} \tau_{w_2} (w_2^{-1}(\alpha)^{-1}p_{w_2,\lambda_1^{\vee}, k}) \otimes u_{\lambda_1^{\vee}, k} 
\end{align*}

Other terms can be rewritten in a similar fashion with the use of Lemma \ref{lem various l case a}. We remark that if the term falls in the case of Lemma \ref{lem various l case a}(2), the algebra structure from Lemma \ref{lem rat sturcture} is also needed. 

Recall that from our inductive construction, $p_{w_2,\lambda_1^{\vee}, k} \in S(V)$ and $\lambda^{\vee}_1(p_{w_2,\lambda_1^{\vee},k}) \neq 0$. By looking at the leading term of $t_{s_{\alpha}}\widetilde{x}_{\mathbf a}$ (in the form as in (\ref{eqn phi form})), we see that $t_{s_{\alpha}}\widetilde{x}$ satisfies properties (ii) and (iv). Hence $t_{s_{\alpha}}\widetilde{x}_{\mathbf a}$ gives the desired element in the case of $s_{\alpha_1}(w_2(\lambda^{\vee}))=w_2(\lambda^{\vee})$. This completes the verification for the case $s_{\alpha}(w_2(\lambda^{\vee}))=w_2(\lambda^{\vee})$.

We now consider the case $s_{\alpha}(w_2(\lambda^{\vee}))\neq w_2(\lambda^{\vee})$. In this case, let $\widetilde{x}_{\mathbf a}=(t_{s_{\alpha}}\alpha-k_{\alpha})x$. We have to rewrite $\widetilde{x}_{\mathbf a}$ to the form (\ref{eqn phi form}). Again we only do it for the leading terms and other terms can be rewritten similarly with the use of Lemma \ref{lem various l case a}.
\begin{align} \label{eqn original form}
 & (t_{s_{\alpha}}\alpha-k_{\alpha}) \tau_{w_2} p_{w_2,\lambda_1^{\vee}, k} \otimes u_{\lambda_1^{\vee}, k} \\
=& \tau_{s_{\alpha}w_2} (w_2^{-1}(\alpha)p_{w_2,\lambda_1^{\vee}, k}) \otimes u_{\lambda_1^{\vee}, k} \quad \mbox{ (by Lemma \ref{lem various l case a} (1))} 
\end{align}

To check property (i), one can use Lemma \ref{lem basic comm lem}. Properties (ii) and (iv) follow from the facts that $(w_2^{-1}(\alpha)p_{w_2,\lambda_1^{\vee}, k}) \in S(V)$ and $\lambda_1^{\vee}(w_2^{-1}(\alpha)p_{w_2,\lambda_1^{\vee}, k})=(w_2(\lambda^{\vee})(\alpha))\lambda^{\vee}(p_{w_2,\lambda_1^{\vee}, k}))\neq 0$. Here $w_2(\lambda^{\vee})(\alpha) \neq 0$ because of our assumption that $s_{\alpha}(w_2(\lambda^{\vee}))\neq w_2(\lambda^{\vee})$.

From (\ref{eqn phi form}), we see that the weight vectors we constructed are linearly independent. By counting the dimension, those weight vectors form a basis for $\mathbb{H} \otimes_{\mathbb{H}_J} U$. Then using the property (ii), we obtain the statement.

\end{document}